\documentclass[oneside,english]{amsart}
\usepackage[T1]{fontenc}
\usepackage[latin9]{inputenc}
\usepackage{float}
\usepackage{amstext}
\usepackage{amsthm}
\usepackage{amssymb}
\usepackage[all]{xy}

\makeatletter

\providecommand{\tabularnewline}{\\}

\numberwithin{equation}{section}
\numberwithin{figure}{section}
\theoremstyle{plain}
\newtheorem{thm}{\protect\theoremname}
\theoremstyle{definition}
\newtheorem{defn}[thm]{\protect\definitionname}
\theoremstyle{plain}
\newtheorem{lem}[thm]{\protect\lemmaname}
\theoremstyle{plain}
\newtheorem{prop}[thm]{\protect\propositionname}
\theoremstyle{definition}
\newtheorem{problem}[thm]{\protect\problemname}
\theoremstyle{definition}
\newtheorem{example}[thm]{\protect\examplename}
\theoremstyle{plain}
\newtheorem{cor}[thm]{\protect\corollaryname}
\theoremstyle{remark}
\newtheorem{rem}[thm]{\protect\remarkname}

\makeatother

\usepackage{babel}
\providecommand{\corollaryname}{Corollary}
\providecommand{\definitionname}{Definition}
\providecommand{\examplename}{Example}
\providecommand{\lemmaname}{Lemma}
\providecommand{\problemname}{Problem}
\providecommand{\propositionname}{Proposition}
\providecommand{\remarkname}{Remark}
\providecommand{\theoremname}{Theorem}

\begin{document}
\global\long\def\G{\mathcal{G}}%
 
\global\long\def\F{\mathcal{F}}%
 
\global\long\def\H{\mathcal{H}}%
\global\long\def\Z{\mathcal{Z}}%
 
\global\long\def\L{\mathcal{L}}%
\global\long\def\U{\mathcal{U}}%
\global\long\def\W{\mathcal{W}}%
 
\global\long\def\E{\mathcal{E}}%
\global\long\def\B{\mathcal{B}}%
 
\global\long\def\A{\mathcal{A}}%
\global\long\def\D{\mathcal{D}}%
\global\long\def\O{\mathcal{O}}%
 
\global\long\def\N{\mathcal{N}}%
 
\global\long\def\X{\mathcal{X}}%
 
\global\long\def\lm{\lim\nolimits}%
 
\global\long\def\then{\Longrightarrow}%

\global\long\def\V{\mathcal{V}}%
\global\long\def\C{\mathcal{C}}%
\global\long\def\adh{\operatorname{adh}\nolimits}%
\global\long\def\Seq{\operatorname{Seq}\nolimits}%
\global\long\def\intr{\operatorname{int}\nolimits}%
\global\long\def\cl{\operatorname{cl}\nolimits}%
\global\long\def\inh{\operatorname{inh}\nolimits}%
\global\long\def\diam{\operatorname{diam}\nolimits\ }%
\global\long\def\card{\operatorname{card}}%
\global\long\def\T{\operatorname{T}}%
\global\long\def\S{\operatorname{S}}%
\global\long\def\id{\operatorname{id}}%

\global\long\def\fix{\operatorname{fix}\nolimits}%
\global\long\def\Epi{\operatorname{Epi}\nolimits}%

\global\long\def\Conv{\mathbf{Conv}}%
\global\long\def\Cap{\mathbf{Cap}}%
\global\long\def\VCap{V\text{-}\mathbf{Cap}}%

\global\long\def\VCcap{V^{(-)}\text{-}\Cap}%

\global\long\def\fix{\operatorname{fix}\nolimits}%
\global\long\def\Epi{\operatorname{Epi}\nolimits}%

\global\long\def\Cconv{\mathbf{C}^{\mathrm{conv}}}%
\global\long\def\op#1{\operatorname#1}%
\global\long\def\pt{\operatorname{pt}\nolimits}%
\global\long\def\ptS{\operatorname{pt}\nolimits _{\mathrm{Stone}}}%
\global\long\def\ppt{\operatorname{ppt}}%

\title[points of convergence lattices]{on points of convergence lattices and sobriety for convergence spaces }
\author{Frédéric Mynard}
\address{NJCU, Department of Mathematics, 2039 Kennedy Blvd Jersey City, NJ
07305, USA}
\email{fmynard@njcu.edu}
\begin{abstract}
We characterize the convergence spaces $(X,\xi)$ such that the space
of points of $(\mathbb{P}X,\lim_{\xi})$ in the category of convergence
lattices is $(X,\xi)$. On the way, we study variants of sobriety
and of the axiom $T_{D}$ in convergence spaces. New phenomena appear
when leaving the realm of topological spaces. We obtain new hindsight
into the space of points of a convergence lattice and study a special
quotient of it, which, in the case $L=(\mathbb{P}X,\lim_{\xi})$ for
a topological space $(X,\xi)$, turns out to be homeomorphic to the
sobrification of $X$.
\end{abstract}

\subjclass[2000]{54A20
}\keywords{convergence space; sober space; weakly sober space; $T_{D}$ axiom
of separation; space of points of a convergence lattice; sobrification}
\maketitle

\section{Preliminaries and introduction}

\subsection{Convergence Spaces}

Let $\mathbb{P}X$ denote the powerset of $X$. If $\A\subset\mathbb{P}X$,
\[
\A^{\uparrow}=\left\{ B\subset X:\exists A\in\A,A\subset B\right\} 
\]
and 
\[
\A^{\#}=\left\{ B\subset X:\forall A\in\A,A\cap B\neq\emptyset\right\} .
\]
 We also write $\A\#\B$, and say that $\A$ and $\B$ mesh, if $\B\subset\A^{\#}$,
equivalently, $\A\subset\B^{\#}$.

A convergence $\xi$ on a set $X$ is a relation between the set $\mathbb{FP}X$
of (set-theoretic) filters on $X$ and the set $X$, denoted $x\in\lm_{\xi}\F$
(or $x\in\lim\F$ if there's no risk of ambiguity) if $\F$ and $x$
are $\xi$-related, subject to the following two axioms:

\begin{equation}
\tag{monotone}\text{\ensuremath{\F}\ensuremath{\ensuremath{\subset\G\then\lm_{\xi}\F\subset\lm_{\xi}\G}}}\label{eq:ConvMonotone}
\end{equation}
\begin{equation}
\tag{point axiom}x\in\lm_{\xi}\{x\}^{\uparrow}\label{eq:ptaxiom}
\end{equation}
 for every $x\in X$ and every $\F,\G\in\mathbb{FP}X$. A convergence
is \emph{of finite depth }if additionally
\begin{equation}
\tag{finite depth}\lm_{\xi}(\F\cap\G)=\lm_{\xi}\F\cap\lm_{\xi}\G\label{eq:finitedepth}
\end{equation}
for every $\F,\G\in\mathbb{FP}X$.

Continuity of a map $f:(X,\xi)\to(Y,\tau)$, in symbols $f\in C(\xi,\tau)$,
is simply preservation of limits, that is,
\begin{equation}
f(\lm_{\xi}\F)\subset\lm_{\tau}f[\F],\tag{ continuity}\label{eq:continuity}
\end{equation}
where $f[\F]=\{B\subset Y:f^{-}(B)\in\F\}\in\mathbb{FP}Y$ is the
image filter of $\F$ under $f$.

Let $\Conv$ denote the category of convergence spaces and continuous
maps. We denote by $|\cdot|:\mathbf{Conv}\to\mathbf{Set}$ the forgetful
functor, so that $|\xi|$ denotes the underlying set of a convergence
$\xi$ and $|f|$ is the underlying function of a morphism. If $|\xi|=|\tau|$,
we say that $\xi$ is \emph{finer than $\tau$ }or that $\tau$ is
\emph{coarser than }$\xi$, in symbols, $\xi\geq\tau$, if the identity
map of their underlying set belongs to $C(\xi,\tau)$. This order
turns the set of convergences on a given set into a complete lattice
whose greatest element is the discrete topology, least element is
the antidiscrete topology, and whose suprema and infima are given
by
\begin{equation}
\lm_{\bigvee_{\xi\in\Xi}\xi}\F=\bigcap_{\xi\in\Xi}\lm_{\xi}\F\text{ and }\lm_{\bigwedge_{\xi\in\Xi}\xi}\F=\bigcup_{\xi\in\Xi}\lm_{\xi}\F.\label{eq:latticeconv-1}
\end{equation}

$\mathbf{Conv}$ is a concrete topological category; in particular,
for every $f:X\to|\tau|$, there is the coarsest convergence on $X$,
called \emph{initial convergence }for\emph{ $(f,\tau)$ }and denoted
$f^{-}\tau$, making $f$ continuous (to $\tau$), and for every $f:|\xi|\to Y$,
there is the finest convergence on $Y$, called \emph{final convergence
for }$(f,\xi)$ and denoted $f\xi$, making $f$ continuous (from
$\xi$). Note that with these notations
\begin{equation}
f\in C(\xi,\tau)\iff\xi\geq f^{-}\tau\iff f\xi\geq\tau.\label{eq:continuity-1-1}
\end{equation}

Moreover, the initial lift on $X$ of a structured source $(f_{i}:X\to|\tau_{i}|)_{i\in I}$
turns out to be $\bigvee_{i\in I}f_{i}^{-}\tau_{i}$ and the final
lift on $Y$ of a structured sink $(f_{i}:|\xi_{i}|\to Y)_{i\in I}$
turns out to be $\bigwedge_{i\in I}f_{i}\xi_{i}$.

If $f:|\xi|\to Y$ is surjective, $f\xi$ is also called the \emph{quotient
convergence. }If $A\subset|\xi|$, the \emph{subspace convergence
}$\xi_{|A}$ or \emph{induced convergence on $A$ }is $i_{A}^{-}\xi$
where $i_{A}:A\to|\xi|$ is the inclusion map. If $\Xi$ is a set
of convergences, then the \emph{product convergence} is
\[
\Pi_{\xi\in\Xi}\xi=\bigvee_{\xi\in\Xi}p_{\xi}^{-}\xi,
\]
where $p_{\xi}:\Pi_{\xi\in\Xi}|\xi|\to|\xi|$ is the projection, that
is, $\Pi_{\xi\in\Xi}\xi$ is the initial convergence for the family
of projections.

Convergences of finite depth form a concretely reflective subcategory
of $\Conv$ with reflector $\operatorname{L}$ defined (on objects)
by $x\in\lim_{\operatorname{L}\xi}\F$ if there is a finite set $\mathbb{D}$
filters on $X$ each converging to $x$ for $\xi$ such that $\F\geq\bigcap_{\D\in\mathbb{D}}\D$.

Of course, every topology $\tau$ can be seen as a convergence given
by $x\in\lim_{\tau}\F$ if and only if $\F\geq\N_{\tau}(x)$, where
$\N_{\tau}(x)$ denotes the neighborhood filter of $x$ in the topology
$\tau$. This turns the category $\mathbf{Top}$ of topological spaces
and continuous maps into a full (concretely reflective) subcategory
of (the topological extensional and cartesian-closed category) $\mathbf{Conv}$.
If $(X,\xi)$ is a convergence space, the \emph{topological modification
$\T\xi$ }of $\xi$ is the topology on $X$ whose set of closed sets
are the $\xi$-\emph{closed} subsets of $X$, that is, the sets $C\subset X$
with 
\begin{equation}
C\in\F\then\lm_{\xi}\F\subset C.\label{eq:closed}
\end{equation}

The $\xi$-\emph{open }subsets are complements of $\xi$-closed sets,
that is, subsets $O$ satisfying
\begin{equation}
\lm_{\xi}\F\cap O\neq\emptyset\then O\in\F.\label{eq:open}
\end{equation}

$\T\xi$ is the finest among topologies coarser than $\xi$. Note
that if $\O_{\xi}$ denotes the set of $\xi$-open sets and $\O_{\xi}(x)=\O_{\xi}\cap\{x\}^{\uparrow}$,
then $\N_{\xi}(x)=(\O_{\xi}(x))^{\uparrow}$.

A subspace $(A,\xi_{|A})$ of a convergence space $(X,\xi)$ is \emph{dense
}if for every $x\in X$ there is $\F\in\mathbb{F}X$ with $A\in\F$
and $x\in\lim_{\xi}\F$.

\subsection{Convergence Lattices}

A pointfree generalization was offered in \cite{FredetJean} in which
the function 
\[
\lm_{\xi}:\mathbb{FP}X\to\mathbb{P}X
\]
is abstracted away to a monotone function 
\[
\lim:\mathbb{F}L\to L
\]
from (order-theoretic) filters on a lattice $L$ to $L$. (\ref{eq:ptaxiom})
is not part of the axiomatic in this pointfree version of convergence
spaces, though the notion can also be recovered (as so-called \emph{centered
}convergence lattices) in an abstract order-theoretic form. 
\begin{defn}
Given a category $\mathbf{C}$ of lattices, a\emph{ convergence $\mathbf{C}$-object
$(L,\lim)$ }is a $\mathbf{C}$-object $L$ together with a monotone
map $\lim:\mathbb{F}L\to L$. The objects of the category $\Cconv$
are the convergence $\mathbf{C}$-objects and the morphisms $\varphi:L\to L'$
are the $\mathbf{C}$-morphisms that are \emph{continuous }in the
sense that for every $\F\in\mathbb{F}L'$,
\begin{equation}
\lm_{L'}\F\leq\varphi(\lm_{L}\varphi^{-1}(\F)),\tag{ptfree continuity}\label{eq:ptfreeCont}
\end{equation}
 where $\varphi^{-1}(\F)=\{\ell\in L:\varphi(\ell)\in\F\}$.
\end{defn}

The category $\Conv$ embeds coreflectively into $(\Cconv)^{op}$
when $\mathbf{C}$ is the category of frames or of coframes: the \emph{powerset
functor} $\mathbb{P}:\Conv\to(\Cconv)^{op}$ sending $(X,\xi)$ to
$(\mathbb{P}X,\lm_{\xi})$ and $f:(X,\xi)\to(Y,\tau)$ to $\text{\ensuremath{\mathbb{P}f=}}f^{-1}:\mathbb{P}Y\to\mathbb{P}X$
(in $\Cconv$) is then right-adjoint to the \emph{point-functor} $\pt:(\Cconv)^{op}\to\Conv$
(the coreflector) where the underlying set of $\pt L$, the set of
``points'' of $L$, is the set of $\Cconv$-morphisms from $L$
to $\mathbb{P}(1)$ (which is terminal in $(\Cconv)^{op}$ \cite[Lemma 2.7]{FredetJean}),
hence depends on the choice of $\mathbf{C}$, so we'll write $\pt_{\mathbf{C}}L$
when different options for the base category $\mathbf{C}$ of lattices
are considered.

For a given $\mathbf{C}$, the convergence structure on $\pt L$ is
given by 
\begin{equation}
\tag{Conv on pt}\lm_{\pt L}\F=\left(\lm_{L}\F^{\circ}\right)^{\bullet},\label{eq:ptconv}
\end{equation}
where $\ell^{\bullet}=\{\varphi\in\pt L:\varphi(\ell)=\{1\}\}$ and
$\F^{\circ}=\left\{ \ell\in L:\ell^{\bullet}\in\F\right\} $. Finally,
if $\varphi\in\Cconv(L,L')$ then $\pt\varphi:\pt L'\to\pt L$ is
defined by $\pt(\varphi)(f)=f\circ\varphi$.

As pointed out in \cite[Remark 2.9]{FredetJean}, a continuous $\mathbf{C}$-morphism
$\varphi:L\to\mathbb{P}(1)$ can be identified with the filter $\F=\varphi^{-1}(\{1\})$
which satisfies $\lim_{L}\F\in\F$ by continuity. Hence the set $\pt_{\mathbf{C}}L$
of points of a $\Cconv$-object $L$ can be identified with sets of
specific filters as follows (\footnote{Recall that a proper filter $\F$ on a lattice $L$ is \emph{prime
}if $\ell_{1}\vee\ell_{2}\in\F$ implies that $\ell_{1}$ or $\ell_{2}$
belongs to $\F$ and \emph{completely prime }if for every $A\subset L$,
$\bigvee_{\ell\in A}\ell\in\F$ implies that $\ell\in\F$ for some
$\ell\in A$. An element $\ell\in L$ is \emph{join-prime }if $\ell\leq\ell_{1}\vee\ell_{2}$
implies $\ell\leq\ell_{1}$ or $\ell\leq\ell_{2}$.}):\medskip{}

\begin{center}
\begin{tabular}{|c|c|c|}
\hline 
cat. $\mathbf{C}$ of & $\pt_{\mathbf{C}}L=\{\F\in\mathbb{F}L:\lim_{L}\F\in\F\text{ and...}\}$ & ref.\tabularnewline
\hline 
\hline 
lattices & $\F$ is prime & \cite[Remark 2.10]{FredetJean}\tabularnewline
\hline 
frames & $\F$ is completely prime & \cite[Remark 2.11]{FredetJean}\tabularnewline
\hline 
coframes & $\F$ is principal of a $\ensuremath{\vee}$-prime element & \cite[Remark 2.12]{FredetJean}\tabularnewline
\hline 
\end{tabular}
\par\end{center}

\medskip{}

\begin{lem}
\label{lem:principalofpoints}\cite[Lemma 2.19(6)]{FredetJean} Let
$L$ be a convergence lattice and let $\U\in\pt L$. Then $(\{\U\}^{\uparrow})^{\circ}=\U$.
\end{lem}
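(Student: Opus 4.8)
The plan is to unwind the definitions of $(\cdot)^{\circ}$ and $(\cdot)^{\bullet}$ on both sides of the claimed equality; once this is done, the statement collapses to the tautology ``$\ell\in\mathcal{U}$ iff $\ell\in\mathcal{U}$''. The only genuinely delicate point is bookkeeping: a point $\mathcal{U}\in\pt L$ is simultaneously a $\Cconv$-morphism $\mathcal{U}:L\to\mathbb{P}(1)$, the filter $\mathcal{U}^{-1}(\{1\})=\{\ell\in L:\mathcal{U}(\ell)=\{1\}\}$ on $L$ with which it is identified, and an element of the underlying set of $\pt L$; moreover $\{\mathcal{U}\}^{\uparrow}$ is, by the usual abuse of notation, the principal ultrafilter at the point $\mathcal{U}$ of $\pt L$, i.e.\ $\{\mathcal{U}\}^{\uparrow}=\{B\subset\pt L:\mathcal{U}\in B\}$. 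Keeping these three roles straight is really all there is to the proof.

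Concretely, I would start from the definition $\mathcal{F}^{\circ}=\{\ell\in L:\ell^{\bullet}\in\mathcal{F}\}$ applied to $\mathcal{F}=\{\mathcal{U}\}^{\uparrow}$, giving
\[
(\{\mathcal{U}\}^{\uparrow})^{\circ}=\bigl\{\ell\in L:\ell^{\bullet}\in\{\mathcal{U}\}^{\uparrow}\bigr\}.
\]
Since $\{\mathcal{U}\}^{\uparrow}$ is the principal ultrafilter at $\mathcal{U}\in\pt L$, the membership $\ell^{\bullet}\in\{\mathcal{U}\}^{\uparrow}$ is equivalent to $\mathcal{U}\in\ell^{\bullet}$. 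Then, using $\ell^{\bullet}=\{\varphi\in\pt L:\varphi(\ell)=\{1\}\}$, this is in turn equivalent to $\mathcal{U}(\ell)=\{1\}$, which by the filter description of $\mathcal{U}$ recalled above is exactly $\ell\in\mathcal{U}$. Chaining the equivalences yields $(\{\mathcal{U}\}^{\uparrow})^{\circ}=\{\ell\in L:\ell\in\mathcal{U}\}=\mathcal{U}$.

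I do not expect any real obstacle: the argument uses neither the continuity of $\mathcal{U}$ nor the monotonicity of $\lim_{L}$ nor any property specific to the base category $\mathbf{C}$ (lattices, frames, or coframes), so the single computation above serves in every case. If anything, the ``hard part'' is purely notational --- making sure the reader sees that $\ell^{\bullet}\in\{\mathcal{U}\}^{\uparrow}$ unpacks to $\mathcal{U}\in\ell^{\bullet}$ rather than to something involving $\lim_{L}$, and that $\{1\}$ here denotes the top element of $\mathbb{P}(1)$, so that $\mathcal{U}(\ell)=\{1\}$ is synonymous with $\ell$ lying in the filter $\mathcal{U}$.
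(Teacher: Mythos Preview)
Your proof is correct and follows essentially the same chain of equivalences as the paper: $\ell\in(\{\U\}^{\uparrow})^{\circ}\iff\ell^{\bullet}\in\{\U\}^{\uparrow}\iff\U\in\ell^{\bullet}\iff\ell\in\U$. The only difference is that you spell out the intermediate identification $\U(\ell)=\{1\}$ between the morphism and filter descriptions of a point, which the paper leaves implicit.
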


\begin{proof}
$\ell\in(\{\U\}^{\uparrow})^{\circ}$ if and only if $\ell^{\bullet}\in\{\U\}^{\uparrow}$,
that is, if and only if $\U\in\ell^{\bullet}$, equivalently, $\ell\in\U$.
\end{proof}
It turns out that filters of the form $\{x\}^{\uparrow}$ are always
points of $\mathbb{P}(X)$ in $\Cconv$ provided that $\mathbf{C}$
be an admissible category (\footnote{A category $\mathbf{C}$ of lattices is \emph{admissible }if $\mathbb{P}(X)$
is a $\mathbf{C}$-object for every set $X$ and there are two classes
$\mathcal{I}$ and $\mathcal{J}$ of index sets such that for all
pairs $L,L'$ of $\mathbf{C}$-objects, the $\mathbf{C}$-morphisms
from $L$ to $L'$ are exactly monotonic maps that preserve all $I$-indexed
infima that exist in $L$ for $I\in\mathcal{I}$ andall mJ -indexed
suprema that exist in $L$ for $J\in\mathcal{J}$. See \cite[Def. 2.5]{FredetJean} }) of lattices \cite[Remark 2.14]{FredetJean}. As a result (\footnote{Save density, this was already observed in \cite[Lemma 2.26]{FredetJean}}):
\begin{prop}
\label{prop:subspaceofptL} A convergence space $(X,\xi)$ is always
(homeomorphic to) a dense  subspace of $\pt((\mathbb{P}X,\lim_{\xi}))$.
\end{prop}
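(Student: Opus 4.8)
The plan is to make the embedding explicit and then feed it through formula (Conv on pt). Write $L=(\mathbb{P}X,\lm_{\xi})$ and define $e\colon X\to\pt L$ by $e(x)=\{x\}^{\uparrow}$; this is legitimate because filters of the form $\{x\}^{\uparrow}$ are always points of $\mathbb{P}X$ in $\Cconv$ for an admissible base category of lattices (\cite[Remark 2.14]{FredetJean}), and $e$ is injective since $\{x\}\in\{y\}^{\uparrow}$ forces $y=x$. Everything else will follow from one bookkeeping identity: for every $A\subseteq X$,
\[
e^{-1}(A^{\bullet})=\{x\in X:A\in\{x\}^{\uparrow}\}=A,
\]
and hence, for every set-theoretic filter $\F$ on $X$, $(e[\F])^{\circ}=\F$, because $A\in(e[\F])^{\circ}\iff A^{\bullet}\in e[\F]\iff e^{-1}(A^{\bullet})=A\in\F$.

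First I would verify that $e$ is a homeomorphism onto its image. By (Conv on pt) and $(e[\F])^{\circ}=\F$, for every filter $\F$ on $X$,
\[
\lm_{\pt L}e[\F]=\bigl(\lm_{L}(e[\F])^{\circ}\bigr)^{\bullet}=\bigl(\lm_{\xi}\F\bigr)^{\bullet},
\]
so, applying the identity above with $A=\lm_{\xi}\F$, one gets $e^{-1}\bigl(\lm_{\pt L}e[\F]\bigr)=\lm_{\xi}\F$. In other words $x\in\lm_{\xi}\F\iff e(x)\in\lm_{\pt L}e[\F]$, which is exactly the statement that $\xi$ is the initial convergence on $X$ induced by $e$; since $e$ is also injective, this makes it an embedding, so $(X,\xi)$ is homeomorphic to the subspace $e(X)$ of $\pt L$. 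The only routine thing to spell out is that an injective initial map of convergence spaces is a homeomorphism onto its subspace image, which one reads off from the factorization $e=i_{e(X)}\circ\bar{e}$ with $i_{e(X)}$ the inclusion.

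Then I would prove density. Let $\U\in\pt L$; identified with a filter, $\U$ is a (proper) filter on $X$ with $\lm_{\xi}\U\in\U$. I would take $\G=e[\U]\in\mathbb{F}(\pt L)$, the image filter. Since $e^{-1}(e(X))=X\in\U$, one has $e(X)\in\G$; and by the computation above, $\lm_{\pt L}\G=\bigl(\lm_{L}\U\bigr)^{\bullet}=\bigl(\lm_{\xi}\U\bigr)^{\bullet}$, which contains $\U$ precisely because $\lm_{\xi}\U\in\U$. Hence $\U\in\lm_{\pt L}\G$ with $e(X)\in\G$, so by definition $e(X)$ is dense in $\pt L$. (Equivalently one could take $\G$ to be the filter on $\pt L$ generated by $\{A^{\bullet}:A\in\U\}$, using $A^{\bullet}\cap B^{\bullet}=(A\cap B)^{\bullet}$, but the image-filter description is the slickest.)

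I do not anticipate a real obstacle: the argument is short once (Conv on pt) is in hand, and the delicate part is only making sure it is insensitive to the admissible category $\mathbf{C}$ (lattices, frames or coframes). It is, since all it uses about $\mathbf{C}$ is that $\{x\}^{\uparrow}$ is always a point and that an arbitrary point is a (proper) filter $\U$ with $\lm_{L}\U\in\U$ --- both recorded in the table preceding the statement. The embedding half is \cite[Lemma 2.26]{FredetJean}; density is the new contribution, and it falls out of the same identity $(e[\F])^{\circ}=\F$.
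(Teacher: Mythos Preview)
Your proof is correct and follows essentially the same approach as the paper: define the map $x\mapsto\{x\}^{\uparrow}$, use the identity $A^{\bullet}\in e[\F]\iff A\in\F$ (equivalently $(e[\F])^{\circ}=\F$) to verify the embedding, and for density push an arbitrary point $\U\in\pt L$ through the image filter $e[\U]$ and invoke $\lim_{\xi}\U\in\U$. The only cosmetic difference is that you reuse the single identity $(e[\F])^{\circ}=\F$ for both halves, whereas the paper re-proves the inclusion $\G\subset h[\G]^{\circ}$ separately in the density step; your packaging is slightly cleaner but not a different argument.
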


\begin{proof}
The map $h:(X,\xi)\to\pt((\mathbb{P}X,\lim_{\xi}))$ defined by $h(x)=\{x\}^{\uparrow}$
is an embedding, i.e., an homeomorphism onto its image. The map $h$
is clearly one-to-one. To see that it is an embedding, let $L=\pt((\mathbb{P}X,\lim_{\xi}))$
and recall that $\lim_{\pt L}h[\F]=(\lim_{\xi}h[\F]^{\circ})^{\bullet}$
where $h[\F]^{\circ}=\{A\in\mathbb{P}(X):A^{\bullet}\in h[\F]\}$
and $A^{\bullet}=\{\G\in\pt L:A\in\G\}$. Noting that $A^{\bullet}\in h[\F]$
if and only if $A\in\F$, we see that $h(x)\in\lim_{\pt L}h[\F]$
if and only if $x\in\lim_{\xi}\F$.

Suppose $\G\in\pt L$. Then $\lim_{\xi}\G\in\G$ (when $\G$ is seen
as an element of $\mathbb{FP}X$), that is, $\G\in(\lim_{\xi}\G)^{\bullet}$.
Moreover $\G\subset h[\G]^{\circ}$ so that $\lim_{\xi}\G\subset\lim_{\xi}h[\G]^{\circ}$
and thus $\G\in(\lim_{\xi}\G)^{\bullet}\subset(\lim_{\xi}h[\G]^{\circ})^{\bullet}$,
that is, $\G\in\lim_{\pt L}h[\G]$. Hence $h(X)$ is a dense subspace
of $\pt L$. To see that $\G\subset h[\G]^{\circ}$, note that if
$G\in\G$ then 
\[
h(G)=\{\{x\}^{\uparrow}:x\in G\}\subset G^{\bullet}
\]
so that $G^{\bullet}\in h[\G]$, that is, $G\in h[\G]^{\circ}$.
\end{proof}

As filters of the form $\{x\}^{\uparrow}$ are exactly completely
prime filters in $\mathbb{P}(X)$ they are the only points in $\Cconv$
if $\mathbf{C}$ is the category of frames \cite[Remark 2.16]{FredetJean}.
Similarly, as join-prime elements of $\mathbb{P}(X)$ are singletons,
points of $\mathbb{P}(X)$ in $\Cconv$ are exactly singletons if
$\mathbf{C}$ is the category of coframes \cite[Remark 2.17]{FredetJean}.
Therefore, in both cases, $\pt((\mathbb{P}X,\lim_{\xi}))$ is homeomorphic
to $(X,\xi)$.

On the other hand, prime filters of $\mathbb{P}(X)$ are the \emph{ultrafilters}
on $X$. Hence, when $\mathbf{C}$ is the category $\mathbf{Lat}$
of lattices and $(X,\xi)$ is a convergence space, the set of points
of $(\mathbb{P}(X),\lim_{\xi})$ in $\Cconv$ is
\begin{equation}
\pt_{\mathbf{Lat}}\mathbb{P}(X)=\left\{ \U\in\mathbb{U}X:\lm_{\xi}\U\in\U\right\} ,\label{eq:pointslattices}
\end{equation}
where $\mathbb{U}X$ denotes the set of ultrafilters on $X$ \cite[Remark 2.15]{FredetJean}.
Though it contains the ``usual points'' identified with their principal
ultrafilter $\{x\}^{\uparrow}$, there may in general be others (See
Lemma \ref{lem:Noetherian} below). One purpose of this note is to
clarify when this does not happen, that is:
\begin{problem}
\label{prob:main} Under what condition on a convergence space $(X,\xi)$
are all ultrafilters $\U\in\mathbb{U}X$ with $\lim_{\xi}\U\in\U$
principal? 

First we should point out that the question at hand was fully solved
\cite[Theorem 2.2 with Addendum I]{hoffmann1977irreducible} for the
case where $(X,\xi)$ is topological. Though our motivations leading
to Problem \ref{prob:main} are distinct from those of \cite{hoffmann1977irreducible}
and come from \cite{FredetJean}, we are thus essentially exploring
a generalization of \cite[Theorem 2.2]{hoffmann1977irreducible} from
topological spaces to convergence spaces. Such an extension may seem
of little relevance, but in light of the question of finding out when
$\pt_{\mathbf{Lat}}\mathbb{P}(X)$ are just the ``usual points of
$X$'' in the context of convergence lattices, this is a most natural
problem and restricting ourselves to topological spaces is then highly
undesirable. Corollary \ref{cor:soberTd} gives a full answer to Problem
\ref{prob:main} for a large class of convergence spaces.

It turns out that some interesting subtleties appear in this wider
context of convergence spaces. On the way, we study variants of sobriety
for convergence spaces and take the opportunity to correct a related
misstatement in \cite[Remark 2.15]{FredetJean}.

Though Problem \ref{prob:main} stems from a question on $\pt L$
in the very particular case where $L=(\mathbb{P}X,\lim_{\xi})$ for
a convergence space $(X,\xi)$, we study more in depth the space $\pt L$
for a general convergence lattice $L$. A particular natural quotient
$\pt'L$ of $\pt L$ turns out to be of importance. In particular,
when $L=(\mathbb{P}X,\lim_{\xi})$ for a topological space $(X,\xi)$,
the space $\pt'L$ is homeomorphic to the sobrification of $(X,\xi)$.
\end{problem}

\section{Irreducible filters, sobriety, and convergence variants\label{sec:sober}}

A filter $\F$ on a convergence space $(X,\xi)$ is \emph{irreducible
}in the sense of \cite{hoffmann1977irreducible} if 
\[
\lm_{\xi}\F\in\F.
\]

Hence, elements of $\pt_{\mathbf{C}}\mathbb{P}(X)$ are specific types
of irreducible filters. As we have seen, when $\mathbf{C}$ is either
the category of frames or that of coframes, points turn out to be
exactly principal ultrafilters without the irreducibility condition
even playing a role. This is not the case for $\pt_{\mathbf{Lat}}\mathbb{P}(X)$
whose elements are exactly irreducible ultrafilters. Note that if
$\F$ is irreducible and $\G\geq\F$ then $\G$ is irreducible too
and if $f:(X,\xi)\to(Y,\tau)$ is continuous and $\F$ is irreducible
on $X$ then $f[\F]$ is irreducible on $Y$.

The notion is related to that of a \emph{compact filter }in the sense
of \cite{D.comp,DGL,myn.relations,myn.applofcompact}. A filter $\F$
on a convergence space $(X,\xi)$ is \emph{compact }(resp. \emph{compactoid})
if 
\begin{equation}
\H\#\F\then\adh\H\#\F\text{ }(\text{resp. }\adh\H\neq\emptyset),\label{eq:compactness}
\end{equation}
for every $\H\in\mathbb{F}X$. 
\begin{lem}
Let $(X,\xi)$ be a convergence space.
\begin{enumerate}
\item If $\F$ is irreducible then $\F$ is compact;
\item A compact ultrafilter is irreducible.
\end{enumerate}
\end{lem}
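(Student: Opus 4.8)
The plan is to derive both statements directly from the definition of the adherence $\adh\H=\bigcup_{\G\#\H}\lim_{\xi}\G$ of a filter $\H$ (equivalently, $\bigcup\{\lim_{\xi}\U:\U\in\mathbb{U}X,\ \U\geq\H\}$), together, for part (2), with the maximality property $\U^{\#}=\U$ of ultrafilters.

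For (1), I would assume $\F$ is irreducible, so $\lim_{\xi}\F\in\F$, and take an arbitrary filter $\H$ with $\H\#\F$ (which in particular forces $\F$ to be a proper filter). Since meshing is symmetric, $\F$ is itself one of the filters $\G$ with $\G\#\H$, hence $\lim_{\xi}\F\subseteq\adh\H$; because $\lim_{\xi}\F\in\F$ and filters are upward closed, this gives $\adh\H\in\F$, and therefore $\adh\H\#\F$. Thus $\F$ is compact. (Once the hypothesis $\H\#\F$ is in force there is no degenerate case to worry about.)

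For (2), let $\U$ be a compact ultrafilter. The idea is to feed $\H=\U$ into the compactness condition — legitimate since $\U\#\U$ — to obtain $\adh\U\#\U$, and then to identify $\adh\U$ with $\lim_{\xi}\U$. For the latter: if $\G\#\U$ then every member of $\G$ meets every member of $\U$, hence lies in $\U^{\#}=\U$, so $\G\subseteq\U$; by monotonicity of $\lim_{\xi}$ this yields $\lim_{\xi}\G\subseteq\lim_{\xi}\U$, and taking the union over all such $\G$ (with $\G=\U$ realizing the bound) gives $\adh\U=\lim_{\xi}\U$. Then $\adh\U\#\U$ reads $\lim_{\xi}\U\#\U$, i.e. $\lim_{\xi}\U\in\U^{\#}=\U$, so $\U$ is irreducible.

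The only step that goes beyond routine unwinding of the definitions of ``irreducible'', ``compact filter'' and ``mesh'' is the identity $\adh\U=\lim_{\xi}\U$ for an ultrafilter $\U$; this is precisely where the ultrafilter hypothesis is used, through $\U^{\#}=\U$ and monotonicity of $\lim_{\xi}$. Part (1), by contrast, needs neither maximality nor irreducibility in an essential way beyond producing a single member $\lim_{\xi}\F$ of $\F$ that sits inside $\adh\H$.
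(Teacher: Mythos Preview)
Your proof is correct and follows essentially the same route as the paper's: for (1) you use $\lim_{\xi}\F\subset\adh\H$ to get $\adh\H\in\F\subset\F^{\#}$, and for (2) you take $\H=\U$ in the compactness condition. The only difference is that you spell out the identity $\adh\U=\lim_{\xi}\U$ for an ultrafilter, which the paper uses tacitly when passing from $\adh\U\#\U$ to $\lim\U\in\U^{\#}=\U$.
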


\begin{proof}
(1). If $\F$ is irreducible and $\H\#\F$ then $\lim\F\subset\adh\H$
so that $\adh\H\in\F\subset\F^{\#}$.

(2). If $\U\in\mathbb{U}X$ is compact, then taking $\H=\U$ in (\ref{eq:compactness})
we obtain $\lim\U\in\U^{\#}=\U$ and thus $\U$ is irreducible.
\end{proof}
Hence an ultrafilter is irreducible if and only if it is compact.
However, there are compact filters that are not irreducible. For instance,
a neighborhood filter of the real line with its usual topology is
compact but not irreducible.

Recall that a topological space $X$ is called \emph{Noetherian }if
every decreasing sequence of closed subsets is eventually constant.
It is well-known that $X$ is Noetherian if and only if all of its
subsets are compact. In \cite[Remark 2.15]{FredetJean}, it is erroneously
stated that a topological space is Noetherian if and only if every
compact ultrafilter is principal. On the contrary:
\begin{lem}
\label{lem:Noetherian} A topological space is Noetherian if and only
if every filter, equivalently every ultrafilter, is compact. In particular,
an infinite Noetherian topological space admits irreducible free ultrafilters.
\end{lem}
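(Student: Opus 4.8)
The plan is to deduce the equivalence from the classical characterization already quoted in the text, namely that a topological space $X$ is Noetherian if and only if every subset of $X$ is compact. The first implication is the easy half: if every subset is compact, then in particular, given any filter $\F$ on $X$ and any $\H$ with $\H\#\F$, pick a set $F_0$ generating a cofinal piece of $\F$ (or simply work with an arbitrary $F\in\F$); since $F$ is compact and $\H$ meshes with $F$ (as $\H$ meshes with all of $\F$), one gets $\adh\H\#\{F\}$, and running this over all $F\in\F$ yields $\adh\H\#\F$, so $\F$ is compact. Thus ``every subset compact'' $\then$ ``every filter compact'', and a fortiori every ultrafilter is compact. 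I would phrase this carefully: compactness of a filter only requires testing against arbitrary $\H$, and the hypothesis that each $F\in\F$ is a compact set handles each such test.

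For the reverse direction I would argue contrapositively: assume $X$ is \emph{not} Noetherian and produce an ultrafilter (hence a filter) that is not compact. Non-Noetherian means there is a strictly decreasing sequence $C_0\supsetneq C_1\supsetneq C_2\supsetneq\cdots$ of closed sets; equivalently $A$ is a non-compact subset for some $A\subseteq X$. Take such a non-compact $A$: there is a filter $\H$ with $\H\#A$ but $\adh\H\not\#A$, i.e. $X\setminus\adh\H\supseteq A$ fails to hold — more precisely there is $H$-related witness so that $\adh\H\cap A$ is avoided by some member of a filter meshing with... Here I would instead use the cleaner route: $A$ non-compact gives an open cover of $A$ with no finite subcover; choose the filter $\F_A$ generated by complements (in $A$) of finite unions from that cover, extend the trace to an ultrafilter $\U$ on $X$ containing $A$, and check directly that $\U$ is not compact by exhibiting $\H$ (the filter of the cover's ``tails'') meshing with $\U$ while $\adh\H$ does not mesh with $\U$. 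Since $\U$ is an ultrafilter, non-compactness of $\U$ is the desired conclusion; and since every ultrafilter is a filter, ``every filter compact'' and ``every ultrafilter compact'' are equivalent once we know either one forces Noetherian.

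The main obstacle is organizing the equivalence ``every filter compact $\iff$ every ultrafilter compact'' cleanly: one direction is trivial (ultrafilters are filters), but the other needs the observation that a filter $\F$ is compact iff every ultrafilter $\U\geq\F$ is compact — this is a standard fact about compact filters (compactness passes to finer filters and a filter is the infimum of the ultrafilters refining it, with $\adh$ behaving well under these operations), and I would cite \cite{D.comp} or \cite{myn.relations} for it rather than reprove it. Granting that, ``every ultrafilter compact'' $\then$ ``every filter compact'' is immediate, closing the loop: Noetherian $\Rightarrow$ every subset compact $\Rightarrow$ every filter compact $\Rightarrow$ every ultrafilter compact $\Rightarrow$ (contrapositive argument above) Noetherian. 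Finally, for the last sentence of the statement: if $X$ is infinite Noetherian, pick any free ultrafilter $\U$ on $X$ (these exist since $X$ is infinite); by the equivalence just proved $\U$ is compact, hence irreducible by Lemma~2 part (2); and $\U$ is free, giving an irreducible free ultrafilter, which in particular shows such a $\U$ is a ``point'' of $(\mathbb{P}X,\lim_\xi)$ in $\mathbf{Lat}$ that is not principal.
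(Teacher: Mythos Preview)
Your forward direction (Noetherian $\Rightarrow$ every filter compact) matches the paper's: both use the classical fact that every subset of a Noetherian space is compact, and for $\H\#\F$ simply observe $\adh\H\cap F\neq\emptyset$ for each $F\in\F$.

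For the reverse direction, however, your contrapositive construction is considerably more laborious than what the paper does, and it forces you to import the lemma ``$\F$ compact iff every $\U\geq\F$ is compact'' from the literature. The paper argues directly: assuming every ultrafilter is compact, take any $A\subset X$ and any $\U\in\mathbb{U}X$ with $A\in\U$; since $\U$ is compact, $\adh\U=\lim\U\in\U^{\#}=\U$, so $\lim\U\cap A\neq\emptyset$. This is exactly the ultrafilter characterization of compactness of $A$, so every subset is compact and $X$ is Noetherian. No contrapositive, no open-cover construction, no auxiliary lemma about passing compactness between a filter and its refining ultrafilters. Your route is not wrong, but it obscures the one-line reason the implication holds. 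The final sentence (infinite $\Rightarrow$ a free ultrafilter exists $\Rightarrow$ it is compact $\Rightarrow$ it is irreducible by the preceding lemma) is handled the same way in both.
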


\begin{proof}
If $X$ is Noetherian, $\F\in\mathbb{F}X$ and $\H$ is a filter $\H\#\F$
then $\adh\H\cap F\neq\emptyset$ for every $F\in\F$ because $F$
is compact. Conversely, if every ultrafilter is compact and $A\subset X$
then $\lim\U\cap A\neq\emptyset$ whenever $A\in\U\in\mathbb{U}X$
because $\adh\U=\lim\U\in\U^{\#}$. 
\end{proof}
Recall that a closed subset $C$ of a topological space is \emph{irreducible
}if whenever $C\subset D\cup F$ where $D$ and $F$ are closed, then
either $C\subset D$ or $C\subset F$. A topological space $X$ is
\emph{sober} if for every irreducible closed subset $C$ of $X$,
there is a (necessarily unique) point $x\in X$ with $C=\cl\{x\}$,
which we call a \emph{generic point for $C$}. Note that we can equivalently
formulate irreducibility of $C$ in the following alternative terms:
If $O_{1},O_{2}$ are open sets intersecting with $C$ then $O_{1}\cap O_{2}$
also intersects with $C$.

It was shown in \cite{hoffmann1977irreducible} that a topological
space $X$ is sober if and only if for every irreducible filter $\F$
on $X$ there is a unique point $x\in X$ with $\lim\F=\lim\{x\}^{\uparrow}$.
Following \cite{hoffmann1977irreducible}, we can take this as a definition
of a \emph{sober convergence space }and call \emph{weakly sober} a
convergence space in which for every irreducible\emph{ ultrafilter
}$\U$ there is a unique $x$ with $\lim\U=\lim\{x\}^{\uparrow}$.
The two notions (sober and weakly sober) coincide for topological
spaces, but not for convergence spaces. Though \cite[Remark 1.9]{hoffmann1977irreducible}
mentions this, it provides neither a proof nor an example and does
not explore the conditions on a convergence space for weak sobriety
and sobriety to coincide. We fill this gap here (Example \ref{exa:weaklysobernotsober},
Proposition \ref{prop:weaklysoberimpliesober}).

In this context, it is useful to point out that the condition 
\begin{equation}
\tag{\ensuremath{S_{0}}}\left(x\in\lim\F\text{ and }t\in\lim\{x\}^{\uparrow}\right)\then t\in\lim\F\label{eq:S0}
\end{equation}
 is automatically true in a topological space because in this case
$t\in\lim\{x\}^{\uparrow}$ implies $\O(t)\subset\O(x)$. However
condition (\ref{eq:S0}) is not generally true in an arbitrary convergence
space. This (vacuous for topological spaces) weak diagonal axiom is
called $S_{0}$ in \cite{Schroder74}. 

Recall (e.g., \cite{DM.book}) that a convergence space is $T_{0}$
if points can be distinguished by the convergence structure, that
is, if
\[
x\neq t\then\left\{ \F\in\mathbb{F}X:x\in\lim\F\right\} \neq\left\{ \F\in\mathbb{F}X:t\in\lim\F\right\} ,
\]
and $T_{1}$ if singletons are closed. Of course, a topological space
is $T_{0}$ (resp. $T_{1}$) in the usual sense if and only if it
is in the convergence sense.

Note that if a convergence has closed limits (that is, $\lim\F$ is
closed for every filter) then it is $S_{0}$, but not conversely.
Of course, in a topological space limits are closed. It might be useful
to have Figure \ref{fig:weakdiagonalconditions} below in mind when
considering the next few results:

\begin{figure}[H]
\[
\xymatrix{ & \text{topological}\ar[r]\ar[d] & \text{diagonal\ar[d]}\\
 & \text{epitopological}\ar[d] & \forall\F\in\mathbb{F}X\;\adh\F\text{ closed\ar[d]}\\
T_{1}\ar[dd] & \forall\F\in\mathbb{F}X\;\text{\ensuremath{\lim\F\text{ closed}}}\ar[d]\ar@/_{2pc}/[ddl]\ar[r] & \forall\U\in\mathbb{U}X\;\text{\ensuremath{\lim\U\text{ closed}}}\ar^{\mathrm{pseudotop.}}@/_{2pc}/[l]\\
 & \forall\F\in\mathbb{F}X\;\lim\F\in\F\then\lim\F\text{ closed}\ar[d]\\
S_{0}\ar[dr] & \forall\U\in\mathbb{U}X\;\lim\U\in\U\then\lim\U\text{ closed}\ar[d]\ar_{\mathrm{pseudotop.}}@/_{1pc}/[u]\\
 & \forall x\in X,\lim\{x\}^{\uparrow}\text{ closed}
}
\]

\label{fig:weakdiagonalconditions}\caption{weak diagonality and separation conditions}
\end{figure}
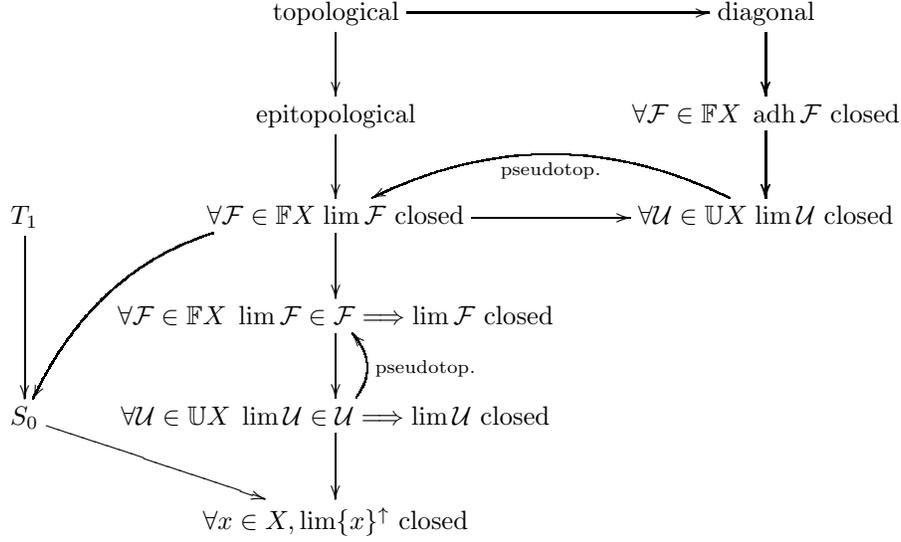

\begin{lem}
\label{lem:equallimpoints} 
\begin{enumerate}
\item If limit sets of principal ultrafilters are closed (in particular
if the convergence is $S_{0}$), then 
\[
\{x,y\}\subset\lim\{x\}^{\uparrow}\cap\lim\{y\}^{\uparrow}\iff\lim\{x\}^{\uparrow}=\lim\{y\}^{\uparrow}.
\]
\item If $X$ is $T_{0}$ and $S_{0}$ then 
\begin{equation}
\{x,y\}\subset\lim\{x\}^{\uparrow}\cap\lim\{y\}^{\uparrow}\then x=y.\label{eq:antisymmetric}
\end{equation}
\end{enumerate}
\end{lem}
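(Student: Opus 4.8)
The plan is to reduce everything to one elementary observation about $\xi$-closed sets: \emph{if $C\subseteq X$ is $\xi$-closed and $p\in C$, then $\lim\{p\}^{\uparrow}\subseteq C$.} Indeed $p\in C$ is the same as $C\in\{p\}^{\uparrow}$, so applying the defining property \eqref{eq:closed} of closedness to the filter $\{p\}^{\uparrow}$ yields $\lim\{p\}^{\uparrow}\subseteq C$. Two preliminary remarks make the rest bookkeeping. First, the parenthetical in (1) is exactly the implication, noted above, that $S_{0}$ forces every $\lim\{x\}^{\uparrow}$ to be $\xi$-closed; so in both items the limit sets in play are available to us as $\xi$-closed sets. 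Second, the hypothesis $\{x,y\}\subseteq\lim\{x\}^{\uparrow}\cap\lim\{y\}^{\uparrow}$ unwinds into four point memberships, two of which --- $x\in\lim\{x\}^{\uparrow}$ and $y\in\lim\{y\}^{\uparrow}$ --- are automatic by the point axiom; so its genuine content is precisely $y\in\lim\{x\}^{\uparrow}$ and $x\in\lim\{y\}^{\uparrow}$.

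For (1), the implication $\Leftarrow$ uses no hypothesis: if $\lim\{x\}^{\uparrow}=\lim\{y\}^{\uparrow}$, the point axiom gives $x\in\lim\{x\}^{\uparrow}=\lim\{y\}^{\uparrow}$ and $y\in\lim\{y\}^{\uparrow}=\lim\{x\}^{\uparrow}$, so $\{x,y\}$ lies in both. For $\Rightarrow$, I would read the hypothesis as $y\in\lim\{x\}^{\uparrow}$ and $x\in\lim\{y\}^{\uparrow}$ and apply the observation twice: with $C=\lim\{x\}^{\uparrow}$ (which is $\xi$-closed, being the limit set of a principal ultrafilter) and $p=y$, obtaining $\lim\{y\}^{\uparrow}\subseteq\lim\{x\}^{\uparrow}$; and symmetrically with $C=\lim\{y\}^{\uparrow}$ and $p=x$, obtaining $\lim\{x\}^{\uparrow}\subseteq\lim\{y\}^{\uparrow}$. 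The two inclusions give equality.

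For (2) I would argue directly from $S_{0}$ and $T_{0}$ (one could instead quote (1), but the direct route avoids even mentioning closedness). Write the hypothesis again as $y\in\lim\{x\}^{\uparrow}$ and $x\in\lim\{y\}^{\uparrow}$. For any $\F\in\mathbb{F}X$ with $x\in\lim\F$, condition $S_{0}$ gives $\lim\{x\}^{\uparrow}\subseteq\lim\F$, hence $y\in\lim\F$; by symmetry $y\in\lim\F$ implies $x\in\lim\F$. So $x$ and $y$ have exactly the same convergent filters, and $T_{0}$ forces $x=y$.

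There is no real obstacle here; the work is entirely in the bookkeeping. One must read \eqref{eq:closed} as quantified over \emph{all} filters, so that it may legitimately be instantiated at a principal ultrafilter; keep the tautology $A\in\{p\}^{\uparrow}\iff p\in A$ at hand; and carefully separate the content of the hypothesis from what the point axiom already supplies. The single conceptual point --- and the reason the topological intuition is misleading here --- is that $\xi$-closedness is a genuinely strong condition, so the forward direction of (1) really does rely on its hypothesis (and (2) on $S_{0}$) to know that the limit sets involved are $\xi$-closed; dropping it, the forward direction of (1) can fail.
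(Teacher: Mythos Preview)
Your proof is correct and follows essentially the same approach as the paper: part (1) is identical, using that a closed set containing a point contains the limit of its principal ultrafilter; part (2) is the same $S_{0}$/$T_{0}$ argument, just phrased as a direct proof rather than the paper's contrapositive.
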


\begin{proof}
It is clear that if $\lim\{x\}^{\uparrow}=\lim\{y\}^{\uparrow}$ then
in particular $\{x,y\}\subset\lim\{x\}^{\uparrow}\cap\lim\{y\}^{\uparrow}$.
Conversely, if $\lim\{x\}^{\uparrow}$ is closed and belongs to $\{y\}^{\uparrow}$,
then $\lim\{y\}^{\uparrow}\subset\lim\{x\}^{\uparrow}$ and similarly
for the reverse inclusion if $\lim\{y\}^{\uparrow}$ belongs to $\{x\}^{\uparrow}$.

For the second part if $x\ne y$, there is a filter $\F$ with $\card(\lim\F\cap\{x,y\})=1$
because the convergence is $T_{0}$. Say $x\in\lim\F.$ Then $y\notin\lim\{x\}^{\uparrow}$
for otherwise $y\in\lim\F$ by $S_{0}$, and similarly $x\notin\lim\{y\}^{\uparrow}$
if $x\in\lim\F$, so that $\{x,y\}\not\subset\lim\{x\}^{\uparrow}\cap\lim\{y\}^{\uparrow}$. 
\end{proof}
In view of Lemma \ref{lem:equallimpoints}, in a $T_{0}$ \emph{topological}
space a generic point of an irreducible closed set is necessarily
unique. However, 

\begin{equation}
\lim\{x\}^{\uparrow}=\lim\{y\}^{\uparrow}\then x=y\label{eq:equalgeneric}
\end{equation}
requires both $S_{0}$ and $T_{0}$ for general convergence spaces
as shown by Examples \ref{exa:multigeneric} and \ref{exa:multigenericS0}
below.
\begin{prop}
\label{prop:weaklysoberimpliesober} A weakly sober convergence space
in which limits of irreducible filters are closed (in particular,
a topological space) is sober.
\end{prop}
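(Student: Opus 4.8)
The plan is to deduce full sobriety from weak sobriety by refining an arbitrary irreducible filter to an ultrafilter and transporting the generic point back down with the help of the closedness hypothesis. For the parenthetical case there is nothing extra to check, since in a topological space every limit set is closed, so in particular limits of irreducible filters are.

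So let $\F$ be an irreducible filter on $X$, i.e.\ $\lim_{\xi}\F\in\F$, and choose a (proper) ultrafilter $\U\geq\F$ by the ultrafilter lemma. Since refinements of irreducible filters are irreducible (as noted right after the definition of irreducibility in Section~\ref{sec:sober}), $\U$ is an irreducible ultrafilter, and weak sobriety provides a unique $x$ with $\lim_{\xi}\U=\lim_{\xi}\{x\}^{\uparrow}$. The crux is then to show $\lim_{\xi}\F=\lim_{\xi}\U$: one inclusion is just monotonicity (\ref{eq:ConvMonotone}) applied to $\F\subseteq\U$, while for $\lim_{\xi}\U\subseteq\lim_{\xi}\F$ I would use that $\lim_{\xi}\F$ is closed --- this is exactly where the hypothesis enters --- and that it lies in $\F$, hence in $\U$, so that (\ref{eq:closed}) applies. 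Therefore $\lim_{\xi}\F=\lim_{\xi}\U=\lim_{\xi}\{x\}^{\uparrow}$, i.e.\ $\F$ has a generic point.

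For uniqueness, if $\lim_{\xi}\F=\lim_{\xi}\{x\}^{\uparrow}=\lim_{\xi}\{y\}^{\uparrow}$, then the identity $\lim_{\xi}\U=\lim_{\xi}\F$ established above gives $\lim_{\xi}\U=\lim_{\xi}\{x\}^{\uparrow}=\lim_{\xi}\{y\}^{\uparrow}$, and the uniqueness clause of weak sobriety applied to the irreducible ultrafilter $\U$ forces $x=y$. I do not expect a genuine obstacle here: the argument is short, and the only step that uses strictly more than weak sobriety is the inclusion $\lim_{\xi}\U\subseteq\lim_{\xi}\F$, for which closedness of $\lim_{\xi}\F$ is precisely what is needed; the sole bookkeeping point is to keep filters proper so that the ultrafilter lemma is available, which is harmless under the standing convention.
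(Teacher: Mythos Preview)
Your proof is correct and essentially identical to the paper's: both refine an irreducible filter $\F$ to an ultrafilter $\U$, use closedness of $\lim\F$ together with $\lim\F\in\F\subset\U$ to get $\lim\U=\lim\F$, and then read off existence and uniqueness of the generic point from weak sobriety. The only cosmetic difference is that the paper phrases uniqueness as ``all $x_{\U}$ coincide'' over $\U\in\mathbb{U}(\F)$, while you fix one $\U$ and note that any generic point of $\F$ is also one of $\U$; these are the same argument.
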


\begin{proof}
If $X$ is a weakly sober convergence space and $\F$ is an irreducible
filter on $X$, then an ultrafilter $\U$ finer than $\F$ is also
irreducible, so that $\lim\U=\lim\{x_{\U}\}^{\uparrow}$ for a unique
point $x_{\U}\in X$. Moreover, $\lim\F\subset\lim\U$ because $\U\geq\F$
and as $\lim\F\in\F\subset\U$ and $\lim\F$ is closed, $\lim\U\subset\lim\F$.
Hence $\lim\U=\lim\F$ for every $\U\in\mathbb{U}(\F)$. Hence all
points $x_{\U}$ coincide and this point is the unique generic point
for $\F$.
\end{proof}
It turns out that all new phenomena in the realm of general convergences
(as opposed to topological spaces) can be observed with finite examples.
Filters on a finite sets are all principal and a finitely deep convergence
on a finite set is entirely determined by its restriction to principal
ultrafilters. Hence such convergences can easily be described by diagrams
in which $x\to y$ means that $y\in\lim\{x\}^{\uparrow}$ and we include
such pictures in each relevant example, using this convention and
systematically omitting the loops (i.e., we do not depict the automatic
convergence relation $x\in\lim\{x\}^{\uparrow}$ ).
\begin{example}[A $T_{0}$ convergence with $x\neq y$ and \textbf{$\lim\{x\}^{\uparrow}=\lim\{y\}^{\uparrow}$}]
 \label{exa:multigeneric} Let $X=\{x,y,z\}$ with the convergence
of finite depth given by $\lim\{x\}^{\uparrow}=\lim\{y\}^{\uparrow}=\lim\{x,y\}^{\uparrow}=\{x,y\}$
and $\lim\{z\}^{\uparrow}=\{z,x\}$. By finite depth, $\lim\{x,y,z\}^{\uparrow}=\lim\{y,z\}^{\uparrow}=\lim\{x,z\}^{\uparrow}=\{x\}$.
\[
\xymatrix{z\ar[r] & x\ar[d]\\
 & y\ar[u]
}
\]
\end{example}

\begin{example}[A (non $T_{0}$) $S_{0}$ convergence space with $x\neq y$ and \textbf{$\lim\{x\}^{\uparrow}=\lim\{y\}^{\uparrow}$} ]
\label{exa:multigenericS0} Let $X=\{x,y,z\}$ the convergence of
finite depth given by $\lim\{x\}^{\uparrow}=\lim\{y\}^{\uparrow}=\lim\{x,y\}^{\uparrow}=\{x,y\}$
and $\lim\{z\}^{\uparrow}=\{z\}$. By finite depth, $\lim\{x,y,z\}^{\uparrow}=\lim\{y,z\}^{\uparrow}=\lim\{x,z\}^{\uparrow}=\emptyset$.
\[
\xymatrix{z & x\ar[d]\\
 & y\ar[u]
}
\]
\end{example}

Note that weakly sober implies (\ref{eq:equalgeneric}): if there
are points $x\neq y$ with $\lim\{x\}^{\uparrow}=\lim\{y\}^{\uparrow}$
then $\{x\}^{\uparrow}$ is an irreducible ultrafilter with two different
generic points, so that the space is not weakly sober. Hence the easy
fact that finite $T_{0}$ topological spaces are sober fails to extend
to convergence spaces, because of the uniqueness requirement on generic
points, though this requirement is vacuous in topological spaces.
Therefore, it makes sense to also consider the variants without this
requirement:

A convergence space $X$ is \emph{quasi-sober }(resp. \emph{weakly
quasi-sober}) if for every irreducible filter $\F$ (respectively,
for every irreducible ultrafilter $\F$) on $X$ there is a (not necessarily
unique) point $x$ of $X$ with $\lim\F=\lim\{x\}^{\uparrow}$. Example
\ref{exa:multigeneric} is $T_{0}$ and quasi-sober but not weakly
sober. Example \ref{exa:multigenericS0} is $S_{0}$ and quasi-sober
but not weakly sober. A quasi-sober space does not need to be $T_{0}$
for the antidiscrete topology is quasi-sober. Of course, if a convergence
space satisfies (\ref{eq:equalgeneric}) and is quasi weakly sober,
it is weakly sober. Lemma \ref{lem:equallimpoints} states that this
is the case in a $T_{0}$ space with closed limits of principal ultrafilters.
More generally, let us call a convergence space \emph{antisymmetric}
if (\ref{eq:antisymmetric}) holds and \emph{almost antisymmetric},
or \emph{aas} for short, if (\ref{eq:equalgeneric}) holds. Note that:
\begin{prop}
\label{prop:wsoberandqwsober} 
\begin{enumerate}
\item A convergence space is weakly sober if and only if it is weakly quasi-sober
and aas. 
\item An antisymmetric space is $T_{0}$.
\item In a convergence in which limits of principal ultrafilters are closed,
the following are equivalent:
\begin{enumerate}
\item weakly sober;
\item weakly quasi-sober and antisymmetric;
\item weakly quasi-sober and aas.
\end{enumerate}
\end{enumerate}
In particular a weakly sober space in which limits of principal ultrafilters
are closed is $T_{0}$.
\end{prop}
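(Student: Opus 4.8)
The plan is to prove the three items in sequence and then read off the final assertion by composing (2) and (3); each step is a short argument resting on the point axiom~(\ref{eq:ptaxiom}) and on Lemma~\ref{lem:equallimpoints}.

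For (1), one direction is essentially the remark preceding the statement: weak sobriety is exactly weak quasi-sobriety together with uniqueness of the generic point, and since $\{x\}^{\uparrow}$ is an irreducible ultrafilter by~(\ref{eq:ptaxiom}), two distinct points $x\neq y$ with $\lim\{x\}^{\uparrow}=\lim\{y\}^{\uparrow}$ would exhibit two distinct generic points for it, contradicting weak sobriety; hence weak sobriety forces (\ref{eq:equalgeneric}), i.e.\ aas. Conversely, given weak quasi-sobriety and aas, I would take an irreducible ultrafilter $\U$, pick $x$ with $\lim\U=\lim\{x\}^{\uparrow}$ by weak quasi-sobriety, and observe that any further such $y$ satisfies $\lim\{x\}^{\uparrow}=\lim\{y\}^{\uparrow}$, whence $x=y$ by aas; so the generic point is unique and the space is weakly sober.

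For (2), let $X$ be antisymmetric and $x\neq y$, and test $T_{0}$ on the filters $\{x\}^{\uparrow}$ and $\{y\}^{\uparrow}$. By~(\ref{eq:ptaxiom}) we have $x\in\lim\{x\}^{\uparrow}$ and $y\in\lim\{y\}^{\uparrow}$. If $y\notin\lim\{x\}^{\uparrow}$ or $x\notin\lim\{y\}^{\uparrow}$, these two filters already distinguish $x$ from $y$; and if both memberships hold, then $\{x,y\}\subset\lim\{x\}^{\uparrow}\cap\lim\{y\}^{\uparrow}$, so (\ref{eq:antisymmetric}) gives the contradiction $x=y$. Hence $X$ is $T_{0}$.

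For (3), the organizing observation is that antisymmetry always implies aas: if $\lim\{x\}^{\uparrow}=\lim\{y\}^{\uparrow}$, then~(\ref{eq:ptaxiom}) places $x$ and $y$ in both limit sets, so (\ref{eq:antisymmetric}) applies. Thus (b)$\Rightarrow$(c) needs no hypothesis, while (a)$\Leftrightarrow$(c) is exactly part (1). The one implication that uses ``limits of principal ultrafilters are closed'' is (c)$\Rightarrow$(b): under that hypothesis, Lemma~\ref{lem:equallimpoints}(1) equates $\{x,y\}\subset\lim\{x\}^{\uparrow}\cap\lim\{y\}^{\uparrow}$ with $\lim\{x\}^{\uparrow}=\lim\{y\}^{\uparrow}$, so the premises of the implications defining aas and antisymmetry coincide and the two properties become equivalent, which gives (c)$\Rightarrow$(b). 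Finally, a weakly sober space with closed limits of principal ultrafilters is weakly quasi-sober and antisymmetric by (3), hence $T_{0}$ by (2). I do not expect a genuine obstacle here: the only things to keep straight are that part (1) and the implication antisymmetric $\Rightarrow$ aas are unconditional, whereas the converse aas $\Rightarrow$ antisymmetric is where the closedness assumption enters through Lemma~\ref{lem:equallimpoints}(1).
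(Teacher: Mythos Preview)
Your proof is correct and follows essentially the same approach as the paper. The only cosmetic difference is in part~(3): the paper closes the cycle via $(a)\Rightarrow(b)$ (showing directly that weak sobriety plus closed limits of principal ultrafilters gives antisymmetry), whereas you close it via $(c)\Rightarrow(b)$ by invoking Lemma~\ref{lem:equallimpoints}(1) to identify aas with antisymmetry under the closedness hypothesis; the underlying argument is the same, and your version has the small advantage of citing the lemma rather than reproving it inline.
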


\begin{proof}
(1) If the space is weakly sober and $\lim\{x\}^{\uparrow}=\lim\{y\}^{\uparrow}$
then $x=y$ by uniqueness of generic points for limits of irreducible
ultrafilters, hence the space is aas. Conversely, aas ensure uniqueness
of generic points in a weakly quasi-sober space. 

(2) Suppose $\{\F\in\mathbb{F}X:x\in\lim\F\}=\{\F\in\mathbb{F}X:y\in\lim\F\}$.
In particular $\{x,y\}\subset\lim\{x\}^{\uparrow}\cap\lim\{y\}^{\uparrow}$
so that $x=y$ by aas.

For (3), to see $(a)\then(b)$, suppose $\{x,y\}\subset\lim\{x\}^{\uparrow}\cap\lim\{y\}^{\uparrow}$.
Then $\lim\{x\}^{\uparrow}\subset\lim\{y\}^{\uparrow}$ because $\lim\{y\}^{\uparrow}\in\{x\}^{\uparrow}$
and $\lim\{y\}^{\uparrow}$ is closed, and similarly for the reverse
inclusion. $(b)\then(c)$ is clear and $(a)\iff(c)$ is (1). 
\end{proof}

A topological sober space is always $T_{0}$ but things are slightly
more complicated for the general case:
\begin{example}[A sober convergence that is not $T_{0}$, whose modification of finite
depth is not sober]
\label{exa:finitedepthnotsober} Let $X=\{x,y,s,t\}$ with the convergence
given by $\lim\{x\}^{\uparrow}=\{x,y,s\}$, $\lim\{y\}^{\uparrow}=\{x,y,t\}$,
$\lim\{s\}^{\uparrow}=\{s\}$ and $\lim\{t\}^{\uparrow}=\{t\},$ and
all other filters do not converge (note that this convergence is not
of finite depth).
\[
\xymatrix{x\ar[d]\ar[r] & s\\
y\ar[u]\ar[r] & t
}
\]
 Then $\lim^{-1}(x)=\lim^{-1}(y)=\{\{x\}^{\uparrow},\{y\}^{\uparrow}\}$
but $\lim\{x\}^{\uparrow}\neq\lim\{y\}^{\uparrow}$ and the condition
of sobriety is satisfied. Note that the modification of finite depth
$\operatorname{L}\xi$ would add the irreducible filter $\{x,y\}^{\uparrow}$
because the $\lim_{\operatorname{L}\xi}\{x,y\}^{\uparrow}=\lim_{\xi}\{x\}^{\uparrow}\cap\lim_{\xi}\{y\}^{\uparrow}=\{x,y\}$
but this limit has no generic point, hence sobriety would fail. 
\end{example}

In other words, the modification of finite depth of a sober convergence
may fail to be sober. On the other hand, the example above is easily
modified to make it of a finite depth:
\begin{example}[A sober convergence of finite depth that is not $T_{0}$]
\label{exa:soberfinitedepthnotT0} Let $X=\{x,y,s,t\}$ with the
convergence of finite depth given by $\lim\{x\}^{\uparrow}=\{x,y,z,s\}$,
$\lim\{y\}^{\uparrow}=\{x,y,z,t\}$, $\lim\{z\}^{\uparrow}=\{x,y,z\}$,
$\lim\{s\}^{\uparrow}=\{s\}$ and $\lim\{t\}^{\uparrow}=\{t\}.$ 
\[
\xymatrix{ & x\ar[d]\ar[r]\ar[dl] & s\\
z\ar[r]\ar[ru] & y\ar[u]\ar[r]\ar[l] & t
}
\]

This convergence is not $T_{0}$ as the same filters converge to $x,y$
and $z$. By finite depth, 
\[
\lim\{x,y\}^{\uparrow}=\lim\{x,z\}^{\uparrow}=\lim\{y,z\}^{\uparrow}=\lim\{x,y,z\}^{\uparrow}=\{x,y,z\},
\]
so that these four filters are irreducible and all have the same unique
generic point $z$.
\end{example}

\begin{example}[A $T_{0}$ weakly sober convergence of finite depth that is not sober]
 \label{exa:weaklysobernotsober} On $X=\{x,y,t,s,w,z\}$ consider
the convergence of finite depth given by $\lim\{x\}^{\uparrow}=\{x,y,s,t\}$,
$\lim\{y\}^{\uparrow}=\{x,y,t,w\}$, $\lim\{t\}^{\uparrow}=\{t\}$,
$\lim\{s\}^{\uparrow}=\{s\}$, $\lim\{w\}^{\uparrow}=w$ and $\lim\{z\}^{\uparrow}=\{z,x\}$. 

\[
\xymatrix{ &  & s\\
z\ar[r] & x\ar[ur]\ar[d]\ar[r] & t\\
 & y\ar[u]\ar[r]\ar[ur] & w
}
\]

This is a $T_{0}$ weakly sober convergence of finite depth that is
not sober, because $\{x,y\}^{\uparrow}$ is irreducible but does not
have a generic point.
\end{example}

Note that if $\xi\geq\tau$ and $\tau$ is (weakly) sober then $\xi$
is (weakly) sober. In particular, if $\T\xi$ is (weakly) sober, so
is $\xi$ but the converse is false:
\begin{example}[A $T_{0}$ sober convergence of finite depth with a non-sober topological
modification]
 \label{exa:sobertopmodnotsober} Take on $X=\{x,y,z\}$ the finitely
deep convergence given by $\lim\{x\}^{\uparrow}=\{x,y\}$, $\lim\{y\}^{\uparrow}=\{x,y,z\}$
and $\lim\{z\}^{\uparrow}=\{y,z\}$. Then by finite depth, $\lim\{x,y\}^{\uparrow}=\{x,y\}$,
$\lim\{x,z\}^{\uparrow}=\lim\{x,y,z\}^{\uparrow}=\{y\}$, and $\lim\{y,z\}^{\uparrow}=\{y,z\}$. 

\[
\xymatrix{x\ar[d]\\
y\ar[u]\ar[r] & z\ar[l]
}
\]
The irreducible filters are $\{x\}^{\uparrow},\{y\}^{\uparrow},\{z\}^{\uparrow}$
,$\{x,y\}^{\uparrow}$ and $\{y,z\}^{\uparrow}$, $x$ is the only
generic point for $\lim\{x\}^{\uparrow}$ and for $\lim\{x,y\}^{\uparrow}$,
$y$ is the only generic point for $\lim\{y\}^{\uparrow}$, $z$ is
the only generic point for $\lim\{z\}^{\uparrow}$ and for $\lim\{y,z\}^{\uparrow}$
. Hence this convergence is sober, but its topological modification
is antidiscrete, hence is not even $T_{0}$.
\end{example}

As observed in \cite{hoffmann1977irreducible}, an arbitrary product
of (weakly) sober convergence spaces is (weakly) sober and a subspace
of a (weakly) sober convergence does not need to be (weakly) sober,
though a closed subspace does. 

\section{More on the convergence space $\protect\pt L$}

\begin{prop}
\label{prop:ptLweaklysober} For every convergence lattice $L$ the
convergence space $\pt L$ is weakly quasi-sober.
\end{prop}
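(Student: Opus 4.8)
The plan is to produce, for a prescribed irreducible ultrafilter $\W$ on $\pt L$, an explicit generic point: namely the ``trace'' $\W^{\circ}=\{\ell\in L:\ell^{\bullet}\in\W\}$ of $\W$ on $L$. The point of the argument will be that $\W$ and the principal ultrafilter $\{\U\}^{\uparrow}$ at $\U:=\W^{\circ}$ have the same trace on $L$ under $(-)^{\circ}$, so that the defining formula (\ref{eq:ptconv}) for the convergence on $\pt L$ forces them to have equal limits.

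First I would record how the operator $\ell\mapsto\ell^{\bullet}$ interacts with the lattice operations, using the identification of points of $L$ with prime filters on $L$ (so that $\varphi\in\ell^{\bullet}$ reads $\ell\in\varphi$). From the filter axioms and primeness one gets at once $(\ell_{1}\wedge\ell_{2})^{\bullet}=\ell_{1}^{\bullet}\cap\ell_{2}^{\bullet}$ and $(\ell_{1}\vee\ell_{2})^{\bullet}=\ell_{1}^{\bullet}\cup\ell_{2}^{\bullet}$; moreover $\top_{L}^{\bullet}=\pt L$ (every filter contains $\top_{L}$), $\bot_{L}^{\bullet}=\emptyset$ (a proper filter omits $\bot_{L}$), and $\ell\mapsto\ell^{\bullet}$ is monotone. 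With these in hand, checking that $\W^{\circ}$ is a proper filter on $L$ is routine: non-emptiness from $\top_{L}^{\bullet}=\pt L\in\W$, closure under binary meets from $(\ell_{1}\wedge\ell_{2})^{\bullet}=\ell_{1}^{\bullet}\cap\ell_{2}^{\bullet}$ together with $\W$ being a filter, upward closure from monotonicity of $(-)^{\bullet}$, and properness from $\bot_{L}^{\bullet}=\emptyset\notin\W$. Primeness of $\W^{\circ}$ is where the ultrafilter hypothesis enters: if $\ell_{1}\vee\ell_{2}\in\W^{\circ}$ then $\ell_{1}^{\bullet}\cup\ell_{2}^{\bullet}\in\W$, and an ultrafilter on the underlying set of $\pt L$ is prime, so $\ell_{1}^{\bullet}\in\W$ or $\ell_{2}^{\bullet}\in\W$, i.e. $\ell_{1}\in\W^{\circ}$ or $\ell_{2}\in\W^{\circ}$.

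Next I would bring in irreducibility of $\W$ through (\ref{eq:ptconv}): we have $\lm_{\pt L}\W=\bigl(\lm_{L}\W^{\circ}\bigr)^{\bullet}$ and $\lm_{\pt L}\W\in\W$, so the very definition of $(-)^{\circ}$ gives $\lm_{L}\W^{\circ}\in\W^{\circ}$. Hence $\W^{\circ}$ is an irreducible prime filter, that is, a point of $L$; put $\U:=\W^{\circ}\in\pt L$. Now Lemma \ref{lem:principalofpoints} gives $(\{\U\}^{\uparrow})^{\circ}=\U=\W^{\circ}$, and a second application of (\ref{eq:ptconv}) yields
\[
\lm_{\pt L}\{\U\}^{\uparrow}=\bigl(\lm_{L}(\{\U\}^{\uparrow})^{\circ}\bigr)^{\bullet}=\bigl(\lm_{L}\W^{\circ}\bigr)^{\bullet}=\lm_{\pt L}\W,
\]
so $\U$ is a generic point for $\lm_{\pt L}\W$ and $\pt L$ is weakly quasi-sober.

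I do not foresee a genuine obstacle: once the candidate $\W^{\circ}$ is in hand, everything is a direct application of (\ref{eq:ptconv}) and of Lemma \ref{lem:principalofpoints}. The one step deserving care is verifying that $\W^{\circ}$ is \emph{prime} rather than merely a filter --- this is precisely the place where one uses that $\W$ is an ultrafilter rather than an arbitrary irreducible filter, and correspondingly why the conclusion is weak quasi-sobriety and not, in general, quasi-sobriety of $\pt L$.
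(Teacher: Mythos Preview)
Your argument is correct and is essentially the paper's own proof: both take the irreducible ultrafilter $\W$ on $\pt L$, show that $\W^{\circ}$ is a prime filter containing $\lm_{L}\W^{\circ}$ (hence a point), and then invoke Lemma~\ref{lem:principalofpoints} to conclude $\lm_{\pt L}\{\W^{\circ}\}^{\uparrow}=\lm_{\pt L}\W$. The only difference is that you spell out the filter axioms for $\W^{\circ}$ explicitly, whereas the paper takes these for granted and records only the primeness step.
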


\begin{proof}
Suppose $\lim_{\pt L}\U\in\U$ for some $\U\in\mathbb{U}(\pt L)$.
Then $\U^{\circ}\in\pt L$ and moreover $\lim_{\pt L}\{\U^{\circ}\}^{\uparrow}=\lim_{\pt L}\U$.
Indeed, $\U^{\circ}$ is prime for if $\ell\vee m\in\U^{\circ}$ then,
since elements of $\pt L$ are prime filters, $(\ell\vee m)^{\bullet}=\ell^{\bullet}\cup m^{\bullet}\in\U$
and $\U$ is an ultrafilter so either $\ell^{\bullet}$ or $m^{\bullet}$
belongs to $\U$, that is, either $\ell\in\U^{\circ}$ or $m\in\U^{\circ}$.
Moreover, as $\lim_{\pt L}\U=(\lim_{\xi}\U^{\circ})^{\bullet}\in\U$
we conclude that $\lim_{\xi}\U^{\circ}\in\U^{\circ}$, by definition
of $\U^{\circ}$. The fact that $\lim_{\pt L}\{\U^{\circ}\}^{\uparrow}=\lim_{\pt L}\U$
follows from Lemma \ref{lem:principalofpoints}.
\end{proof}

\begin{prop}
\label{prop:ptLaas} Let $(L,\lim_{L})$ be a convergence lattice
satisfying 
\begin{equation}
\lm_{L}x\wedge\lm_{L}y\in x\cap y\then x=y\label{eq:aasinL}
\end{equation}
for every $x,y\in\pt L$. Then the convergence space $\pt L$ is aas,
hence weakly sober.
\end{prop}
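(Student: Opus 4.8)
The plan is to verify the aas property of $\pt L$ directly and then deduce weak sobriety from the earlier structural results. By Proposition \ref{prop:ptLweaklysober}, $\pt L$ is weakly quasi-sober for every convergence lattice, and by Proposition \ref{prop:wsoberandqwsober}(1) a weakly quasi-sober space that is aas is weakly sober; so the whole task reduces to showing that $\lim_{\pt L}\{u\}^{\uparrow}=\lim_{\pt L}\{v\}^{\uparrow}$ forces $u=v$ for $u,v\in\pt L$.

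First I would simplify the limit of a principal ultrafilter on $\pt L$. Since $\lim_{\pt L}\F=(\lim_{L}\F^{\circ})^{\bullet}$ and, by Lemma \ref{lem:principalofpoints}, $(\{u\}^{\uparrow})^{\circ}=u$, we obtain $\lim_{\pt L}\{u\}^{\uparrow}=(\lim_{L}u)^{\bullet}$, and likewise $\lim_{\pt L}\{v\}^{\uparrow}=(\lim_{L}v)^{\bullet}$. Writing $a=\lim_{L}u$ and $b=\lim_{L}v$, the hypothesis becomes simply $a^{\bullet}=b^{\bullet}$; here recall that, identifying points with their associated filters, $a^{\bullet}=\{\varphi\in\pt L:a\in\varphi\}$.

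Next I would use that a point is an irreducible filter: by continuity, $a=\lim_{L}u\in u$ and $b=\lim_{L}v\in v$. From $a\in u$ we get $u\in a^{\bullet}=b^{\bullet}$, hence $b\in u$; symmetrically $a\in v$. Since $u$ and $v$ are filters on $L$ and thus closed under finite meets, $a\wedge b$ lies in both, that is, $\lim_{L}u\wedge\lim_{L}v=a\wedge b\in u\cap v$. Applying the hypothesis (\ref{eq:aasinL}) with $x=u$ and $y=v$ then gives $u=v$, so $\pt L$ is aas, and the proposition follows from the first paragraph.

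I do not expect a genuine obstacle here: the proof is essentially an unwinding of definitions. The two places that need care are the bookkeeping with the operators $(\cdot)^{\circ}$ and $(\cdot)^{\bullet}$ in the reduction step, and the small observation powering the argument — that the set equality $a^{\bullet}=b^{\bullet}$ transfers membership of the limit elements $a$ and $b$ back and forth between the point-filters $u$ and $v$, which is exactly what is needed to bring $a\wedge b$ into $u\cap v$ and trigger condition (\ref{eq:aasinL}).
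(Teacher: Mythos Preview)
Your proof is correct and follows essentially the same route as the paper's: reduce $\lim_{\pt L}\{u\}^{\uparrow}=\lim_{\pt L}\{v\}^{\uparrow}$ to $(\lim_L u)^{\bullet}=(\lim_L v)^{\bullet}$ via Lemma~\ref{lem:principalofpoints}, deduce $\lim_L u\wedge\lim_L v\in u\cap v$, apply (\ref{eq:aasinL}), and conclude weak sobriety from Propositions~\ref{prop:ptLweaklysober} and~\ref{prop:wsoberandqwsober}. You even spell out the step the paper leaves implicit, namely why $a^{\bullet}=b^{\bullet}$ together with $a\in u$, $b\in v$ forces $a\wedge b\in u\cap v$.
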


\begin{proof}
Assume (\ref{eq:aasinL}) and suppose $x,y\in\pt L$ with $\lim_{\pt L}\{x\}^{\uparrow}=\lim_{\pt L}\{y\}^{\uparrow}$,
that is,
\[
\left(\lm_{L}(\{x\}^{\uparrow})^{\circ}\right)^{\bullet}=\left(\lm_{L}(\{y\}^{\uparrow})^{\circ}\right)^{\bullet}.
\]
In view of Lemma \ref{lem:principalofpoints}, $(\lim_{L}x)^{\bullet}=(\lim_{L}y)^{\bullet}$
so that $\lm_{L}x\wedge\lm_{L}y\in x\cap y$. As a result of (\ref{eq:aasinL}),
$x=y$.

In view of Propositions \ref{prop:ptLweaklysober} and \ref{prop:wsoberandqwsober},
$\pt L$ is not only quasi weakly sober but also weakly sober.
\end{proof}
An element $\ell$ of a convergence lattice $(L,\lim)$ is \emph{closed
}(e.g., \cite{myn.ptfreeAP} \footnote{called \emph{quasi-closed }in \cite{FredetJean}.})
if $\lim\F\leq\ell$ whenever $\ell\in\F$ (equivalently whenever
$\ell\in\F^{\#}$ where $\F^{\#}=\{m\in L:\forall f\in\F\;m\wedge f>\bot\}$)
and \emph{open }if $\lim\F\wedge\ell>\bot$ implies $\ell\in\F$ (\footnote{called \emph{fully open }in \cite{convframes} and a weaker notion
than the notion of open element in \cite{FredetJean}}). This is an abstraction of the case $L=(\mathbb{P}X,\lim_{\xi})$
in which closed elements are exactly the closed subsets of $(X,\xi)$
and open elements are exactly the open subsets. Let $\O_{L}$ and
$\C_{L}$ denote the sublattices of $L$ formed by open elements and
closed elements respectively.

\begin{lem}
\label{lem:closedelmttosubset} If $\ell\in L$ is closed then $\ell^{\bullet}$
is a closed subset of $\pt L$. If $\ell\in L$ is open, then $\ell^{\bullet}$
is an open subset of $\pt L$.
\end{lem}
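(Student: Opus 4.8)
The plan is to unwind the definitions through the formula (\ref{eq:ptconv}), $\lim_{\pt L}\F=(\lim_{L}\F^{\circ})^{\bullet}$, using a few elementary facts about the map $\ell\mapsto\ell^{\bullet}$. First, it is monotone: if $m\leq\ell$ then $m^{\bullet}\subseteq\ell^{\bullet}$, since each point $\U\in\pt L$, viewed as a filter on $L$, is upward closed. Second, each point of $L$ is a \emph{proper} filter, so $\ell\in\U$ forces $\ell>\bot$, and such filters are closed under finite meets. Third, for any filter $\F$ on $\pt L$ the set $\F^{\circ}=\{\ell\in L:\ell^{\bullet}\in\F\}$ is a filter on $L$ (this is built into the meaningfulness of (\ref{eq:ptconv}), cf.\ \cite{FredetJean}). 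With these in hand each half is a short computation.

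For the first statement, assume $\ell\in L$ is closed and let $\F$ be a filter on $\pt L$ with $\ell^{\bullet}\in\F$; the goal is $\lim_{\pt L}\F\subseteq\ell^{\bullet}$. By the very definition of $\F^{\circ}$, having $\ell^{\bullet}\in\F$ is the same as $\ell\in\F^{\circ}$. Since $\ell$ is closed in $L$ and $\ell\in\F^{\circ}\in\mathbb{F}L$, the defining property of a closed element gives $\lim_{L}\F^{\circ}\leq\ell$. Applying monotonicity of $\ell\mapsto\ell^{\bullet}$ and then (\ref{eq:ptconv}),
\[
\lim_{\pt L}\F=(\lim_{L}\F^{\circ})^{\bullet}\subseteq\ell^{\bullet},
\]
which is exactly (\ref{eq:closed}) for the subset $C=\ell^{\bullet}$. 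Hence $\ell^{\bullet}$ is a closed subset of $\pt L$.

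For the second statement, assume $\ell\in L$ is open and let $\F$ be a filter on $\pt L$ with $\lim_{\pt L}\F\cap\ell^{\bullet}\neq\emptyset$; the goal is $\ell^{\bullet}\in\F$. Pick $\U\in\lim_{\pt L}\F\cap\ell^{\bullet}$. By (\ref{eq:ptconv}), $\U\in(\lim_{L}\F^{\circ})^{\bullet}$, i.e.\ $\lim_{L}\F^{\circ}\in\U$, and $\U\in\ell^{\bullet}$, i.e.\ $\ell\in\U$. Since $\U$ is a filter on $L$ closed under binary meets, $\lim_{L}\F^{\circ}\wedge\ell\in\U$, and since $\U$ is proper this meet is $>\bot$. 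The defining property of an open element, applied to the filter $\F^{\circ}$, then gives $\ell\in\F^{\circ}$, that is, $\ell^{\bullet}\in\F$; this is (\ref{eq:open}) for $O=\ell^{\bullet}$. Hence $\ell^{\bullet}$ is an open subset of $\pt L$.

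There is no serious obstacle here beyond bookkeeping: the crux is simply to line up ``closed element of $L$'' with ``closed subset of $\pt L$'' and ``open element of $L$'' with ``open subset of $\pt L$'' through (\ref{eq:ptconv}). The only points deserving a moment's attention are that $\F^{\circ}$ really is a filter on $L$ (so that the closed/open element conditions apply to it) and that the points of $L$ are proper filters, the latter being what lets one pass from $\lim_{L}\F^{\circ}\wedge\ell\in\U$ to $\lim_{L}\F^{\circ}\wedge\ell>\bot$ in the open case.
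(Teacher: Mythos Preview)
Your proof is correct and follows essentially the same approach as the paper's own proof: in both cases one passes from $\ell^{\bullet}\in\F$ to $\ell\in\F^{\circ}$, applies the closedness/openness of $\ell$ to the filter $\F^{\circ}$, and reads the conclusion back via (\ref{eq:ptconv}). Your version is slightly more explicit about the auxiliary facts (monotonicity of $\ell\mapsto\ell^{\bullet}$, properness and meet-closure of points), but the argument is the same.
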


\begin{proof}
Let $x\in\lim_{\pt L}\F$ with $\ell^{\bullet}\in\F$. We need to
show that $x\in\ell^{\bullet}$, equivalently, that $\ell\in x$.
Since $\lim_{L}\F^{\circ}\in x$ and $\ell\in\F^{\circ}$ for $\ell$
closed, we conclude that $\lim_{L}\F^{\circ}\leq\ell$, so that $\ell\in x$.

Let $\F\in\mathbb{FP}\pt L$ with $x\in\lim_{\pt L}\F\cap\ell^{\bullet}$,
that is, $\ell\in x$ and $\lim_{L}\F^{\circ}\in x$ so that $\ell\wedge\lim_{L}\F^{\circ}\neq\bot$
and $\ell$ is open, hence $\ell\in\F^{\circ}$, that is, $\ell^{\bullet}\in\F$.
\end{proof}
\begin{prop}
\label{prop:limLpointclosed} If $(L,\lim_{L})$ is a convergence
lattice in which
\begin{enumerate}
\item $\lim_{L}\F$ is closed for every $\F\in\mathbb{F}L$ then limits
sets are closed in the convergence space $\pt L$. 
\item $\lim_{L}x$ is closed for every $x\in\pt L$ then $\lim_{\pt L}\{x\}^{\uparrow}$
is closed for every $x\in\pt L$.
\end{enumerate}
\end{prop}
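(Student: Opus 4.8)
The plan is to obtain both parts directly from the description (\ref{eq:ptconv}) of the convergence on $\pt L$, namely $\lim_{\pt L}\F=(\lim_{L}\F^{\circ})^{\bullet}$, together with the first half of Lemma \ref{lem:closedelmttosubset}, which guarantees that $\ell^{\bullet}$ is a closed subset of $\pt L$ whenever $\ell$ is a closed element of $L$. In short, every limit set of $\pt L$ is the image under the operation $(\cdot)^{\bullet}$ of some $\lim_{L}$-value of $L$, so it is enough to know that the relevant $\lim_{L}$-value is a closed element of $L$.

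For part (1), I would fix an arbitrary $\F\in\mathbb{FP}\pt L$. Since $\F^{\circ}=\{\ell\in L:\ell^{\bullet}\in\F\}$ is a filter of $L$ — this is already implicit in (\ref{eq:ptconv}) being well posed, and follows because $\ell\mapsto\ell^{\bullet}$ is monotone and preserves finite meets — the hypothesis applies to $\F^{\circ}$ and tells us that $\lim_{L}\F^{\circ}$ is a closed element of $L$. Lemma \ref{lem:closedelmttosubset} then makes $(\lim_{L}\F^{\circ})^{\bullet}$ a closed subset of $\pt L$, and by (\ref{eq:ptconv}) this set is exactly $\lim_{\pt L}\F$. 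As $\F$ was arbitrary, all limit sets of the convergence space $\pt L$ are closed.

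For part (2), I would fix a point $x\in\pt L$, identified as usual with the corresponding filter on $L$. By Lemma \ref{lem:principalofpoints}, $(\{x\}^{\uparrow})^{\circ}=x$, so (\ref{eq:ptconv}) specializes to $\lim_{\pt L}\{x\}^{\uparrow}=(\lim_{L}x)^{\bullet}$. The hypothesis says precisely that $\lim_{L}x$ is a closed element of $L$, so Lemma \ref{lem:closedelmttosubset} again gives that $(\lim_{L}x)^{\bullet}=\lim_{\pt L}\{x\}^{\uparrow}$ is closed in $\pt L$.

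I do not expect a genuine obstacle; the whole content is the translation between the convergence on $\pt L$ and closedness in $L$, which is exactly what Lemma \ref{lem:closedelmttosubset} packages. The only things to handle carefully are the (routine) verification that $\F^{\circ}$ is really an order-theoretic filter of $L$, so that $\lim_{L}$ may be applied to it in part (1), and, in part (2), keeping straight the identification of a point $x$ with its associated filter, so that ``$\lim_{L}x$ is closed'' is literally the statement needed after applying Lemma \ref{lem:principalofpoints} to (\ref{eq:ptconv}). It is worth noting that the two hypotheses — hence the two conclusions — are genuinely different: part (1) constrains $\lim_{L}$ on all filters of $L$ and yields closedness of every limit set of $\pt L$, while part (2) constrains $\lim_{L}$ only on those filters that happen to be points and correspondingly yields closedness only of the limit sets $\lim_{\pt L}\{x\}^{\uparrow}$ of principal ultrafilters.
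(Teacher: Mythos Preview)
Your proposal is correct and follows essentially the same approach as the paper: use $\lim_{\pt L}\F=(\lim_{L}\F^{\circ})^{\bullet}$ together with Lemma~\ref{lem:closedelmttosubset}, and for part~(2) invoke Lemma~\ref{lem:principalofpoints} to identify $(\{x\}^{\uparrow})^{\circ}$ with $x$. Your write-up is in fact a bit more explicit than the paper's (you spell out that $\F^{\circ}$ is a filter and clearly separate the two parts), but the underlying argument is identical.
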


\begin{proof}
Let $\lim_{\pt L}\F\in\G\in\mathbb{FP}(\pt L)$. By definition $\lim_{\pt L}\F=\left(\lim_{L}\F^{\circ}\right)^{\bullet}$
is closed because $\lim_{L}\F^{\circ}$ is closed, by Lemma \ref{lem:closedelmttosubset}.
The second part is the particular case where $\F=\{x\}^{\uparrow}$
for $x\in\pt L$, with the observation that in this case $\F^{\circ}=x$
by Lemma \ref{lem:principalofpoints}.
\end{proof}
\begin{cor}
\label{cor:ptLweaklysober} If $(L,\lim_{L})$ is a convergence lattice
in which $\lim_{L}x$ is closed whenever $x\in\pt L$, the following
are equivalent:
\begin{enumerate}
\item $\pt L$ is weakly sober;
\item $\pt L$ is antisymmetric; 
\item $\pt L$ is aas;
\item (\ref{eq:aasinL}) for every $x,y\in\pt L$;
\item $\lim_{L}x=\lim_{L}y\then x=y$ for every $x,y\in\pt L$.
\end{enumerate}
\end{cor}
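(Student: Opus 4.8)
The strategy is to chain the five conditions together by combining the results already proved in this section with the hypothesis that $\lim_L x$ is closed for every $x \in \pt L$. First I would invoke Proposition~\ref{prop:limLpointclosed}(2): the closedness hypothesis on $\lim_L x$ gives that $\lim_{\pt L}\{x\}^{\uparrow}$ is closed for every $x \in \pt L$, i.e. limits of principal ultrafilters are closed in the convergence space $\pt L$. This is exactly the standing assumption of Proposition~\ref{prop:wsoberandqwsober}(3), and Proposition~\ref{prop:ptLweaklysober} tells us $\pt L$ is always weakly quasi-sober. So Proposition~\ref{prop:wsoberandqwsober}(3) immediately yields the equivalence of (1) ``$\pt L$ weakly sober'', (2) ``$\pt L$ antisymmetric'' (since in the presence of weak quasi-sobriety, condition (b) there reads ``weakly quasi-sober and antisymmetric''), and (3) ``$\pt L$ aas'' (condition (c) there). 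That disposes of $(1)\Leftrightarrow(2)\Leftrightarrow(3)$.

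Next I would connect (3) to (4). The implication $(4)\Rightarrow$(weakly sober) is precisely Proposition~\ref{prop:ptLaas}, so $(4)\Rightarrow(1)$, hence $(4)\Rightarrow(3)$ through what we already have. For the converse $(3)\Rightarrow(4)$, I would unwind the definition of aas in $\pt L$: suppose $x,y\in\pt L$ satisfy $\lim_L x \wedge \lim_L y \in x\cap y$, i.e.\ $\lim_L x \in y$ and $\lim_L y \in x$. Using Lemma~\ref{lem:principalofpoints}, $(\{x\}^{\uparrow})^{\circ}=x$, so $\lim_{\pt L}\{x\}^{\uparrow}=(\lim_L x)^{\bullet}$, and likewise $\lim_{\pt L}\{y\}^{\uparrow}=(\lim_L y)^{\bullet}$. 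The hypothesis $\lim_L y \in x$ means $x \in (\lim_L y)^{\bullet}$, i.e.\ $(\lim_L y)^{\bullet}\in\{x\}^{\uparrow}$; since $\lim_L y$ is closed, Lemma~\ref{lem:closedelmttosubset} makes $(\lim_L y)^{\bullet}$ a closed subset of $\pt L$, and from $(\lim_L y)^{\bullet}\in\{x\}^{\uparrow}$ and the closedness we get $\lim_{\pt L}\{x\}^{\uparrow}\subset(\lim_L y)^{\bullet}=\lim_{\pt L}\{y\}^{\uparrow}$; symmetrically $\lim_{\pt L}\{y\}^{\uparrow}\subset\lim_{\pt L}\{x\}^{\uparrow}$. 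Hence $\lim_{\pt L}\{x\}^{\uparrow}=\lim_{\pt L}\{y\}^{\uparrow}$, and aas of $\pt L$ forces $x=y$, which is (4). This is really the same ``closed sets absorb upward'' argument used in the proof of Proposition~\ref{prop:wsoberandqwsober}(3)$(a)\Rightarrow(b)$, transported into $L$ via the correspondence $\ell \mapsto \ell^{\bullet}$.

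Finally, $(5)$: clearly $(4)\Rightarrow(5)$ since $\lim_L x = \lim_L y$ trivially implies $\lim_L x \wedge \lim_L y = \lim_L x \in x$ and $= \lim_L y \in y$ (using $\lim_L x \in x$, $\lim_L y \in y$, which hold because $x,y$ are irreducible filters in $\pt L$). For $(5)\Rightarrow(2)$ (or $(5)\Rightarrow(3)$), suppose $\lim_{\pt L}\{x\}^{\uparrow}=\lim_{\pt L}\{y\}^{\uparrow}$; by Lemma~\ref{lem:principalofpoints} this says $(\lim_L x)^{\bullet}=(\lim_L y)^{\bullet}$, and I would argue that $\ell \mapsto \ell^{\bullet}$ is injective on closed elements — indeed $m^{\bullet}=\{z\in\pt L:m\in z\}$, and for closed $m$ this set of points determines $m$ because $x \in m^{\bullet} \iff m \in x$; so from $(\lim_L x)^{\bullet}=(\lim_L y)^{\bullet}$ and closedness of both $\lim_L x$ and $\lim_L y$ we recover $\lim_L x = \lim_L y$, whence $x=y$ by (5). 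The one point to double-check here is exactly that injectivity of $(-)^{\bullet}$ on $\C_L$; the cleanest route is to observe that a closed element $m$ equals the ``meet'' of the closed elements containing it in a trivial way and that $m^{\bullet}$ faithfully encodes the set $\{z\in\pt L : m\in z\}$, so I expect the main (mild) obstacle to be phrasing this injectivity cleanly rather than any genuine difficulty. With $(1)\Leftrightarrow(2)\Leftrightarrow(3)$, $(3)\Leftrightarrow(4)$, and $(4)\Rightarrow(5)\Rightarrow(2)$ all in hand, the cycle is complete.
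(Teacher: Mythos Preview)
Your argument for $(1)\Leftrightarrow(2)\Leftrightarrow(3)$, for $(4)\Rightarrow(1)$, for $(3)\Rightarrow(4)$, and for $(4)\Rightarrow(5)$ is correct and essentially matches the paper's (the paper works the implication $(5)\Rightarrow(4)$ directly in $L$ via (\ref{eq:closedinpoint}) rather than your $(3)\Rightarrow(4)$ detour through $\pt L$, but your detour is fine).

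There is, however, a genuine gap in your proposed $(5)\Rightarrow(3)$: the map $\ell\mapsto\ell^{\bullet}$ is \emph{not} injective on $\C_L$ in general. Your justification (``for closed $m$ this set of points determines $m$'') is a spatiality-type assertion that fails whenever $L$ lacks enough points---in the extreme case where $\pt L=\emptyset$ every $\ell^{\bullet}$ is empty. So you cannot conclude $\lim_L x=\lim_L y$ from $(\lim_L x)^{\bullet}=(\lim_L y)^{\bullet}$ by that route.

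The fix is easy and you already have the ingredients: you do not need injectivity on all of $\C_L$, only for elements of the form $\lim_L x$ with $x\in\pt L$, and these come with a built-in witness. Since $\lim_L x\in x$, we have $x\in(\lim_L x)^{\bullet}=(\lim_L y)^{\bullet}$, so $\lim_L y\in x$; by (\ref{eq:closedinpoint}) (using that $\lim_L y$ is closed) this gives $\lim_L x\leq\lim_L y$, and symmetrically $\lim_L y\leq\lim_L x$. Hence $\lim_L x=\lim_L y$ and $(5)$ yields $x=y$. The paper avoids the issue altogether by running the cycle the other way: it proves $(1)\Rightarrow(5)$ (immediate, since $\lim_L x=\lim_L y$ gives $\lim_{\pt L}\{x\}^{\uparrow}=\lim_{\pt L}\{y\}^{\uparrow}$ via Lemma~\ref{lem:principalofpoints}, and weak sobriety forces $x=y$) and $(5)\Rightarrow(4)$ directly in $L$ using (\ref{eq:closedinpoint}), never needing any injectivity of $(-)^{\bullet}$.
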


\begin{proof}
By Propositions \ref{prop:ptLweaklysober} and \ref{prop:limLpointclosed},
$\pt L$ is a weakly quasi-sober space in which limits of principal
ultrafilters are closed. In view of Proposition \ref{prop:wsoberandqwsober},
(1), (2) and (3) are equivalent. That $(4)\then(5)$ is clear and
$(4)\then(3)$ is Proposition \ref{prop:ptLaas}. To see $(5)\then(4)$
suppose that $\lm_{L}x\wedge\lm_{L}y\in x\cap y$. Then $\lim_{L}y\leq\lim_{L}x$
because $\lim_{L}x$ is closed and belongs to $y$, and similarly
for the reverse inequality. Hence by (5), $x=y$.

To see that $(1)\then(5)$ suppose $\lim_{L}x=\lim_{L}y$ so that,
in view of Lemma \ref{lem:principalofpoints}, $\lim_{L}(\{x\}^{\uparrow})^{\circ}=\lim_{L}(\{y\}^{\uparrow})^{\circ}$
so that $\lim_{\pt L}\{x\}^{\uparrow}=\lim_{\pt L}\{y\}^{\uparrow}$.
By uniqueness of generic point of irreducible ultrafilters in $\pt L$,
$x=y$.
\end{proof}
In the case $L=(\mathbb{P}X,\lim_{\xi})$, this means:
\begin{cor}
\label{cor:weaklysoberptP} If $(X,\xi)$ is a convergence space in
which limits of irreducible ultrafilters are closed then $(X,\xi)$
is a dense subspace of $\pt(\mathbb{P}X,\lim_{\xi})$ which is weakly
sober if and only if 
\begin{equation}
\lm_{\xi}\U=\lm_{\xi}\W\then\U=\W\label{eq:ptPaas}
\end{equation}
for every pair $\U,\W$ of irreducible ultrafilters. 
\end{cor}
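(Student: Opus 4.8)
The plan is to invoke Corollary~\ref{cor:ptLweaklysober} with $L=(\mathbb{P}X,\lim_{\xi})$ and translate its hypotheses and conclusions back into statements about $(X,\xi)$. First I would recall from Proposition~\ref{prop:subspaceofptL} that $(X,\xi)$ embeds as a dense subspace of $\pt(\mathbb{P}X,\lim_{\xi})$, which handles the ``dense subspace'' part of the statement unconditionally. Then I would check that the standing hypothesis of Corollary~\ref{cor:ptLweaklysober}---namely that $\lim_{L}x$ is closed for every $x\in\pt L$---is exactly the assumption in the present corollary that limits of irreducible ultrafilters are closed: points of $(\mathbb{P}X,\lim_{\xi})$ in $\mathbf{Lat}$ are precisely the irreducible ultrafilters $\U$ on $X$ (by \eqref{eq:pointslattices}), and for such a $\U$, the element $\lim_{L}\U$ of $L=\mathbb{P}X$ is just the subset $\lim_{\xi}\U$, whose closedness as an element of the convergence lattice coincides with closedness as a $\xi$-closed subset of $X$.

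Granting this identification, Corollary~\ref{cor:ptLweaklysober} tells us that $\pt L$ is weakly sober if and only if condition (5) there holds, i.e. $\lim_{L}x=\lim_{L}y\then x=y$ for all $x,y\in\pt L$. Reinterpreting via the correspondence between points and irreducible ultrafilters, $\lim_{L}x$ and $\lim_{L}y$ are the subsets $\lim_{\xi}\U$ and $\lim_{\xi}\W$ of $X$, so (5) becomes precisely \eqref{eq:ptPaas}: for every pair $\U,\W$ of irreducible ultrafilters, $\lim_{\xi}\U=\lim_{\xi}\W$ implies $\U=\W$. This gives the desired equivalence.

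The one point requiring a little care---and the step I expect to be the main (mild) obstacle---is the bookkeeping of the two layers of notation: distinguishing the abstract lattice-theoretic ``$\lim_{L}$ applied to a point'' from ``$\lim_{\xi}$ applied to an ultrafilter,'' and likewise the abstract notion of a closed element of $L$ from a $\xi$-closed subset of $X$. Once one observes that under the identification of $\pt_{\mathbf{Lat}}\mathbb{P}(X)$ with $\{\U\in\mathbb{U}X:\lim_{\xi}\U\in\U\}$ a point $\U$, viewed as a completely-meet-irreducible... (rather, as a prime filter) corresponds to itself as an ultrafilter, and that an element $C\in\mathbb{P}X$ is closed in the convergence-lattice sense exactly when $C$ is a $\xi$-closed subset, everything matches up verbatim. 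No genuinely new argument is needed beyond this dictionary; the corollary is a direct specialization.
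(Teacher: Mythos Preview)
Your proposal is correct and follows exactly the paper's intended route: the corollary is stated immediately after Corollary~\ref{cor:ptLweaklysober} with the sentence ``In the case $L=(\mathbb{P}X,\lim_{\xi})$, this means,'' so the proof is precisely the specialization you describe, combined with Proposition~\ref{prop:subspaceofptL} for density. Your dictionary between closed elements of $L$ and $\xi$-closed subsets, and between points of $L$ and irreducible ultrafilters, is the right translation and matches the paper's implicit argument.
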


We will see with Remark \ref{rem:notaas} below that the condition
(\ref{eq:ptPaas}) turns out to be very restrictive.

An alternative approach to force (\ref{eq:aasinL}) is to consider
the equivalence relation $\sim$ on $\pt L$ defined by $x\sim y$
if $\lim_{L}x=\lim_{L}y$ and consider the quotient set 
\[
\pt'L:=\pt L/\sim,
\]
endowed with the quotient convergence. To be explicit, with $q:\pt L\to\pt'L$
the canonical surjection, this means
\begin{eqnarray*}
\lm_{\pt'L}\F & = & \bigcup_{q[\G]\leq\F}q(\lm_{\pt L}\G)\\
 & = & \bigcup_{q[\G]\leq\F}q((\lm_{L}\G^{\circ})^{\bullet}).
\end{eqnarray*}

\begin{rem}
\label{rem:qislim} Note that we can identify $\pt'L$ with $\{\lim_{L}x:x\in\pt L\}\subset L$
and in this interpretation, the canonical surjection $q$ is the restriction
$\lim'_{L}:\pt L\to\pt'L$ of $\lim_{L}$ to points, when points of
$L$ are interpreted as filters on $L$. Hence we can consider the
order induced by $L$ on $\pt'L$ (\footnote{namely, if $x,y\in\pt'L$, let $x\sqsubseteq y$ denote $\lim_{L}t\leq\lim_{L}s$
where $t\in q^{-1}(x)$ and $s\in q^{-1}(y)$, which is well-defined
by definition of $\sim$, but with the interpretation 
\[
\pt'L=\{\lm_{L}x:x\in\pt L\}\subset L
\]
There is no need to distinguish $\sqsubseteq$ from the order $\leq$
of $L$.}) which we also denote $\leq$. Note that $\pt'L$ is not a sublattice
of $L$ in general. In contrast the subset $\C_{L}$ of closed elements
of $L$ is a sublattice of $L$.
\end{rem}

Note that 
\begin{equation}
\forall c\in\C_{L}\forall x\in\pt L\;\left(c\in x\iff\lm_{L}x\leq c\right),\label{eq:closedinpoint}
\end{equation}
because $c\in x\then\lim_{L}x\leq c$ follows from $c$ closed, and
the converse is true whether $c$ is closed or not, because $\lim_{L}x\in x$.
Hence $x\cap\C_{L}=(\uparrow\lim_{L}x)\cap\C_{L}$ whenever $x\in\pt L$. 
\begin{lem}
\label{lem:Lclosedlims} If $(L,\lim_{L})$ is a convergence lattice
with closed limits, that is, $\lim_{L}:\mathbb{F}L\to\C_{L}$, then
\[
x\in\lm_{\pt L}\F\iff\lm_{L}x\leq\lm_{L}\F^{\circ},
\]
that is, 
\begin{equation}
\lm_{\pt L}\F=q^{-1}(\pt'L\cap\downarrow\lm_{L}\F^{\circ}).\label{eq:ptLlimwithq}
\end{equation}

Thus
\begin{equation}
\lm_{\pt'L}\H=\bigcup_{q[\F]\leq\H}\pt'L\cap\downarrow\lm_{L}\F^{\circ}.\label{eq:ptL'}
\end{equation}

In particular, $\lim_{\pt'L}\H=\downarrow\lim_{\pt'L}\H$.
\end{lem}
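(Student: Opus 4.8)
The statement to prove is Lemma~\ref{lem:Lclosedlims}: assuming $\lim_{L}$ takes values in $\C_{L}$, we want the characterization $x\in\lm_{\pt L}\F\iff\lm_{L}x\leq\lm_{L}\F^{\circ}$, the two displayed reformulations (\ref{eq:ptLlimwithq}) and (\ref{eq:ptL'}), and the fact that $\lim_{\pt'L}\H$ is downward closed. The key observation is that the defining formula (\ref{eq:ptconv}), $\lm_{\pt L}\F=(\lm_{L}\F^{\circ})^{\bullet}$, says that $x\in\lm_{\pt L}\F$ iff $x\in(\lm_{L}\F^{\circ})^{\bullet}$, i.e.\ iff $\lm_{L}\F^{\circ}\in x$. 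Now $\lm_{L}\F^{\circ}$ is a closed element by hypothesis, so (\ref{eq:closedinpoint}) applies verbatim with $c=\lm_{L}\F^{\circ}$: $\lm_{L}\F^{\circ}\in x\iff\lm_{L}x\leq\lm_{L}\F^{\circ}$. That is the whole content of the first equivalence, and it is essentially a one-line deduction from (\ref{eq:ptconv}) and (\ref{eq:closedinpoint}).

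For the reformulation (\ref{eq:ptLlimwithq}), I would simply observe that, under the identification of Remark~\ref{rem:qislim}, $q(x)=\lm_{L}x$, so the set of $x\in\pt L$ with $\lm_{L}x\leq\lm_{L}\F^{\circ}$ is exactly $q^{-1}$ of the set of elements of $\pt'L$ lying below $\lm_{L}\F^{\circ}$, i.e.\ $q^{-1}(\pt'L\cap\downarrow\lm_{L}\F^{\circ})$. Then (\ref{eq:ptL'}) follows by feeding this into the quotient-convergence formula $\lm_{\pt'L}\H=\bigcup_{q[\F]\leq\H}q(\lm_{\pt L}\F)$ displayed just before Remark~\ref{rem:qislim}: applying $q$ to $q^{-1}(\pt'L\cap\downarrow\lm_{L}\F^{\circ})$ recovers $\pt'L\cap\downarrow\lm_{L}\F^{\circ}$ (since $q$ is surjective and that set is saturated, being a $q$-preimage), and taking the union over all $\F$ with $q[\F]\leq\H$ gives (\ref{eq:ptL'}).

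Finally, for $\lim_{\pt'L}\H=\downarrow\lim_{\pt'L}\H$: by (\ref{eq:ptL'}), $\lim_{\pt'L}\H$ is a union of sets of the form $\pt'L\cap\downarrow\lm_{L}\F^{\circ}$, each of which is downward closed \emph{within $\pt'L$} (with respect to the order $\leq$ inherited from $L$, as set up in Remark~\ref{rem:qislim}); an arbitrary union of downward-closed subsets of a poset is downward closed, so $\lim_{\pt'L}\H$ is downward closed in $\pt'L$, which is the assertion $\lim_{\pt'L}\H=\downarrow\lim_{\pt'L}\H$ (the downward closure being taken in $\pt'L$).

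**Main obstacle.**
There is no real obstacle; the proof is a routine unwinding of definitions once one notices that the closedness hypothesis $\lim_{L}\F\in\C_{L}$ is precisely what lets (\ref{eq:closedinpoint}) be applied to $c=\lm_{L}\F^{\circ}$. The only point requiring a little care is bookkeeping around the identification in Remark~\ref{rem:qislim} --- making sure that ``$q(x)=\lm_{L}x$'' and the two readings of $\leq$ on $\pt'L$ are used consistently --- and, in the last step, being explicit that ``downward closed'' is meant relative to $\pt'L$ rather than to all of $L$, since $\pt'L$ is not a sublattice of $L$ and the $\F^{\circ}$-suprema need not lie in $\pt'L$.
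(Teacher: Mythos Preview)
Your proposal is correct and follows essentially the same route as the paper's proof: both unwind the definition $\lm_{\pt L}\F=(\lm_{L}\F^{\circ})^{\bullet}$ to get $\lm_{L}\F^{\circ}\in x$, apply (\ref{eq:closedinpoint}) using closedness of $\lm_{L}\F^{\circ}$, and then derive (\ref{eq:ptLlimwithq}) and (\ref{eq:ptL'}) from the identification $q(x)=\lm_{L}x$ together with surjectivity of $q$. You are somewhat more explicit than the paper about the final downward-closure claim and about the bookkeeping around Remark~\ref{rem:qislim}, but there is no substantive difference.
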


\begin{proof}
By definition, $x\in\lim_{\pt L}\F$ if and only if $\lim_{L}\F^{\circ}\in x$,
which is equivalent to $\lim_{L}x\leq\lim_{L}\F^{\circ}$ by (\ref{eq:closedinpoint}).
The formula (\ref{eq:ptLlimwithq}) is a rephrasing from which (\ref{eq:ptL'})
follows because $q$ is onto and thus $qq^{-1}=\id_{\pt L'}$.
\end{proof}
In particular, if $\H=\{\lim_{L}x\}^{\uparrow}$ is a principal ultrafilter
on $\pt'L$ then: 
\begin{cor}
\label{cor:pt'aas} If $(L,\lim_{L})$ is a convergence lattice with
closed limits, then
\begin{equation}
\lm_{\pt'L}\{\lm_{L}x\}^{\uparrow}=\pt'L\cap\downarrow\lm_{L}x.\label{eq:limsingletoninpt'L}
\end{equation}
Thus $\pt'L$ is an aas convergence space.
\end{cor}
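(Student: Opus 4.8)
The plan is to prove the displayed formula \eqref{eq:limsingletoninpt'L} first and then deduce that $\pt'L$ is aas. For the formula, I would simply instantiate the general identity \eqref{eq:ptL'} of Lemma~\ref{lem:Lclosedlims} at the principal ultrafilter $\H=\{\lim_L x\}^{\uparrow}$ on $\pt'L$. The point is to identify the filters $\F\in\mathbb{FP}(\pt L)$ with $q[\F]\leq\{\lim_L x\}^{\uparrow}$. One such filter is $\{x\}^{\uparrow}$ itself, since $q[\{x\}^{\uparrow}]=\{q(x)\}^{\uparrow}=\{\lim_L x\}^{\uparrow}$, and for that choice $\F^{\circ}=x$ by Lemma~\ref{lem:principalofpoints}, so the corresponding term in the union is $\pt'L\cap\downarrow\lim_L x$. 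For the reverse containment I would argue that any $\F$ with $q[\F]\leq\{\lim_L x\}^{\uparrow}$ has $x\in\F$ (because $\{\lim_L x\}$, viewed back in $\pt L$, sits inside $q^{-1}(\{\lim_L x\})$, which must be a member of $\F$), hence $\F\subset\{x\}^{\uparrow}$, so $\F^{\circ}\subset x$ and thus $\downarrow\lim_L\F^{\circ}\subset\downarrow\lim_L x$ by monotonicity of $\lim_L$. Therefore every term of the union in \eqref{eq:ptL'} is contained in $\pt'L\cap\downarrow\lim_L x$, and combining with the term coming from $\{x\}^{\uparrow}$ gives equality.

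Granting \eqref{eq:limsingletoninpt'L}, aas is almost immediate. Writing $p,p'$ for two points of $\pt'L$, pick representatives $x,y\in\pt L$ with $q(x)=p$, $q(y)=p'$ (so, in the interpretation of Remark~\ref{rem:qislim}, $p=\lim_L x$ and $p'=\lim_L y$). Suppose $\lim_{\pt'L}\{p\}^{\uparrow}=\lim_{\pt'L}\{p'\}^{\uparrow}$. By \eqref{eq:limsingletoninpt'L} this reads $\pt'L\cap\downarrow\lim_L x=\pt'L\cap\downarrow\lim_L y$. Since $p=\lim_L x$ belongs to the left-hand set, it belongs to the right-hand set, i.e.\ $\lim_L x\leq\lim_L y$ (inside $L$, using that $\lim_L x\in\pt'L$); symmetrically $\lim_L y\leq\lim_L x$. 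Hence $\lim_L x=\lim_L y$, which is exactly the relation $x\sim y$ defining the quotient, so $p=q(x)=q(y)=p'$. This establishes \eqref{eq:equalgeneric} for $\pt'L$, i.e.\ $\pt'L$ is aas.

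The only genuinely delicate point is the reverse containment in \eqref{eq:limsingletoninpt'L}, namely showing that every $\F$ contributing to the union in \eqref{eq:ptL'} satisfies $\F^{\circ}\subset x$; this is where the condition $q[\F]\leq\{\lim_L x\}^{\uparrow}$ has to be unpacked carefully, keeping straight the identification of $\pt'L$ with the subset $\{\lim_L t:t\in\pt L\}$ of $L$ and the fact that $q^{-1}$ of a singleton is precisely a $\sim$-class. Everything else is a direct substitution into Lemma~\ref{lem:Lclosedlims} and the bookkeeping of $\leq$ versus $\sqsubseteq$ from Remark~\ref{rem:qislim}, which by that remark need not be distinguished.
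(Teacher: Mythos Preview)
Your overall plan matches the paper's: instantiate \eqref{eq:ptL'} at $\H=\{\lim_L x\}^{\uparrow}$, exhibit one filter giving the term $\pt'L\cap\downarrow\lim_L x$, and bound every other term by it. The choice $\F=\{x\}^{\uparrow}$ for the inclusion $\supset$ is correct (and in fact cleaner than the paper's phrasing), and your deduction of aas from \eqref{eq:limsingletoninpt'L} is essentially the paper's.

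The gap is in the reverse inclusion. Two of your steps fail. First, $q[\F]\leq\{\lim_L x\}^{\uparrow}$ means $q[\F]\subset\{\lim_L x\}^{\uparrow}$, i.e., every $q(F)$ contains $\lim_L x$; this only gives $q^{-1}(\{\lim_L x\})\in\F^{\#}$, not $q^{-1}(\{\lim_L x\})\in\F$. For instance $\F=\{x,z\}^{\uparrow}$ with $q(z)\neq q(x)$ satisfies $q[\F]\leq\{\lim_L x\}^{\uparrow}$ while $q^{-1}(\{\lim_L x\})\notin\F$. Second, even when $q^{-1}(\{\lim_L x\})\in\F$, the chain ``$x\in\F$, hence $\F\subset\{x\}^{\uparrow}$, hence $\F^{\circ}\subset x$'' is wrong: the fibre $q^{-1}(\{\lim_L x\})$ is the whole $\sim$-class, not $\{x\}$. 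Take $y\neq x$ with $\lim_L y=\lim_L x$ and $\F=\{y\}^{\uparrow}$; then $q[\F]=\{\lim_L x\}^{\uparrow}$ but $\F^{\circ}=y\not\subset x$. Note also that your argument for $\subset$ never uses the hypothesis of closed limits, which is a warning sign. The paper's route is to observe instead that $q[\F]\leq\{\lim_L x\}^{\uparrow}$ yields $(\lim_L x)^{\bullet}\in\F^{\#}$; since $\lim_L x$ is closed, $(\lim_L x)^{\bullet}$ is a closed subset of $\pt L$ by Lemma~\ref{lem:closedelmttosubset}, and for a closed set $C$ one has $C\in\F^{\#}\Rightarrow\lim_{\pt L}\F\subset C$ (pass to the finer filter generated by $\F$ and $C$). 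Hence $(\lim_L\F^{\circ})^{\bullet}\subset(\lim_L x)^{\bullet}$, and one concludes $\pt'L\cap\downarrow\lim_L\F^{\circ}\subset\pt'L\cap\downarrow\lim_L x$ via \eqref{eq:closedinpoint}. That is the missing idea.
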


\begin{proof}
Note that $\lm_{\pt'L}\{\lm_{L}x\}^{\uparrow}=\bigcup_{q[\F]\leq\{\lim_{L}x\}^{\uparrow}}\pt'L\cap\downarrow\lm_{L}\F^{\circ}$
by Lemma \ref{lem:Lclosedlims} and that $q[\F]\leq\{\lim_{L}x\}^{\uparrow}$
means $(\lim_{L}x)^{\bullet}\in\F^{\#}$ because $\lim_{L}x\in q[F]$
if there is $y\in F$ with $\lim_{L}y=\lim_{L}x$ and thus $y\in(\lim_{L}x)^{\bullet}\cap F$.
As $\lim_{L}x$ is a closed element of $L$, $(\lim x)^{\bullet}$
is a closed subset of $\pt L$ by Lemma \ref{lem:closedelmttosubset},
so that $\lim_{\pt L}\F=(\lim_{L}\F^{\circ})^{\bullet}\subset(\lim_{L}x)^{\bullet}$.
Hence if $\lim_{L}y\in\pt'L\cap\downarrow\lim_{L}\F^{\circ}$ then
$y\in(\lim_{L}\F^{\circ})^{\bullet}\subset(\lim_{L}x)^{\bullet}$,
that is, $\lim_{L}x\in y$ and $\lim_{L}x$ is closed thus $\lim_{L}y\leq\lim_{L}x$.
Consequently, $\lim_{L}y\in\pt'L\cap\downarrow\lim_{L}x$. Conversely,
if $\lim_{L}y\leq\lim_{L}x$ then $\lim_{L}x\in y$ and thus $\F=\{y\}^{\uparrow}$
satisfies $q[\F]\leq\{\lim_{L}x\}^{\uparrow}$ and $\lim_{L}y=\lim_{L}\F^{\circ}$
because $\F^{\circ}=y$ by Lemma \ref{lem:closedelmttosubset}, which
completes the proof of (\ref{eq:limsingletoninpt'L}).

As a result of (\ref{eq:limsingletoninpt'L}), if $\lm_{\pt'L}\{\lm_{L}x\}^{\uparrow}=\lm_{\pt'L}\{\lm_{L}y\}^{\uparrow}$
then $\lim_{L}x\leq\lim_{L}y$ and $\lim_{L}y\leq\lim_{L}x$ so that
$\lim_{L}x=\lim_{L}y$. Hence, $\pt'L$ is aas.
\end{proof}
\begin{lem}
\label{lem:saturatedoepnclosed} If $U$ is an open or a closed subset
of $\pt L$ then $U$ is saturated, that is, $U=q^{-1}(q(U))$. In
particular, if $\ell$ is either an open or a closed element of $L$
then $\ell^{\bullet}$ is saturated. 
\end{lem}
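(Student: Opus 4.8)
The plan is to reduce the statement to the following claim: if $x,y\in\pt L$ satisfy $q(x)=q(y)$, i.e.\ $\lim_{L}x=\lim_{L}y$, then $x$ lies in an open (resp.\ closed) subset $U$ of $\pt L$ if and only if $y$ does. Since $U\subset q^{-1}(q(U))$ always holds and $q^{-1}(q(U))$ consists exactly of those $y$ for which some $x\in U$ has $q(x)=q(y)$, this claim gives $U=q^{-1}(q(U))$. The only ingredient I need is the identity $\lim_{\pt L}\{x\}^{\uparrow}=(\lim_{L}x)^{\bullet}$ for $x\in\pt L$, which follows from (\ref{eq:ptconv}) and Lemma \ref{lem:principalofpoints}, since $(\{x\}^{\uparrow})^{\circ}=x$. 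In particular $\lim_{\pt L}\{x\}^{\uparrow}$ depends only on $\lim_{L}x$, so $q(x)=q(y)$ forces $\lim_{\pt L}\{x\}^{\uparrow}=\lim_{\pt L}\{y\}^{\uparrow}$; and since $\pt L$ is a convergence space, the point axiom puts both $x$ and $y$ into this common limit set.

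Now assume $q(x)=q(y)$. If $U$ is closed in $\pt L$ and $x\in U$, then $U\in\{x\}^{\uparrow}$, so (\ref{eq:closed}) gives $\lim_{\pt L}\{x\}^{\uparrow}\subset U$, whence $y\in\lim_{\pt L}\{x\}^{\uparrow}\subset U$. If $U$ is open in $\pt L$ and $x\in U$, then $x\in\lim_{\pt L}\{y\}^{\uparrow}\cap U$, a nonempty set, so (\ref{eq:open}) gives $U\in\{y\}^{\uparrow}$, i.e.\ $y\in U$. In both cases the roles of $x$ and $y$ are symmetric (the relation $\sim$ is symmetric), so $x\in U\iff y\in U$, and $U$ is saturated. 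For the last assertion, if $\ell\in L$ is open (resp.\ closed) then $\ell^{\bullet}$ is an open (resp.\ closed) subset of $\pt L$ by Lemma \ref{lem:closedelmttosubset}, hence saturated by what precedes.

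I do not expect a genuine obstacle here; the only thing to keep straight is which inclusion the definitions of ``open subset'' and ``closed subset'' of a convergence space actually provide — for closed $U$ one pushes membership forward from $\{x\}^{\uparrow}$ to $\lim_{\pt L}\{x\}^{\uparrow}$, while for open $U$ one detects membership of $U$ in $\{y\}^{\uparrow}$ from a nonempty intersection of $\lim_{\pt L}\{y\}^{\uparrow}$ with $U$ — together with the symmetry of $\sim$, which is what upgrades the implications into the equality $U=q^{-1}(q(U))$.
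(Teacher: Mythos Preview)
Your proof is correct and follows essentially the same route as the paper's: both compute $\lim_{\pt L}\{x\}^{\uparrow}=(\lim_{L}x)^{\bullet}$ via Lemma~\ref{lem:principalofpoints}, observe that this depends only on $q(x)=\lim_{L}x$, and then invoke the definitions (\ref{eq:open}) and (\ref{eq:closed}) together with the point axiom to propagate membership in $U$ across an equivalence class. The only cosmetic difference is that the paper starts from $x\in q^{-1}(q(U))$ and picks $t\in U$ with $\lim_{L}t=\lim_{L}x$, whereas you phrase it symmetrically as $q(x)=q(y)\Rightarrow(x\in U\iff y\in U)$; the underlying argument is identical.
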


\begin{proof}
Suppose $U$ is open and $x\in q^{-1}(q(U))$ that is, $\lim_{L}x\in q(U)$,
that is, there is $t\in U$ with $\lim_{L}t=\lim_{L}x$. As 
\[
\lm_{\pt L}\{t\}^{\uparrow}=(\lm_{L}(\{t\}^{\uparrow})^{\circ})^{\bullet}=(\lm_{L}t)^{\bullet}=(\lm_{L}x)^{\bullet}=\lm_{\pt L}\{x\}^{\uparrow},
\]
we conclude that $\lim_{\pt L}\{x\}^{\uparrow}\cap U\neq\emptyset$
so that $U\in\{x\}^{\uparrow}$ because $U$ is open, that is, $x\in U$.
If $U$ is closed instead, then $\lim_{\pt L}\{t\}^{\uparrow}=\lim_{\pt L}\{x\}^{\uparrow}\subset U$
and we conclude similarly. The case of $\ell^{\bullet}$ follows from
Lemma \ref{lem:closedelmttosubset}.
\end{proof}

\begin{cor}
\label{cor:qopenclosed} $q=\lim'_{L}:\pt L\to\pt'L$ is an open and
closed map (sends open sets to open sets and closed sets to closed
sets). Moreover, the initial topology for $\lim_{L}'$ is the topological
modification of $\pt L$:
\[
\T(\pt L)=q^{-}(\T\pt'L)=q^{-}(\T q(\pt L)).
\]
\end{cor}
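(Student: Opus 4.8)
The plan is to derive both assertions from the saturation Lemma \ref{lem:saturatedoepnclosed}, the explicit formula for the quotient convergence on $\pt'L$, and two elementary facts valid in $\Conv$: continuous maps pull back open (and closed) subsets, and the topological modification of a convergence has the open subsets of that convergence as its open sets.

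First I would prove that $q$ is an open map. Let $U$ be an open subset of $\pt L$; by Lemma \ref{lem:saturatedoepnclosed}, $U$ is saturated, so $U=q^{-1}(q(U))$. To see that $q(U)$ is open in $\pt'L$, take $w\in\lim_{\pt'L}\F$ with $w\in q(U)$; by the quotient formula there are $x\in\pt L$ and $\G\in\mathbb{FP}(\pt L)$ with $q(x)=w$, $x\in\lim_{\pt L}\G$ and $q[\G]\leq\F$. Since $q(x)=w\in q(U)$ and $U$ is saturated, $x\in q^{-1}(q(U))=U$, so $x\in\lim_{\pt L}\G\cap U$; as $U$ is open, $U\in\G$, hence $q(U)\in q[\G]$ and therefore $q(U)\in\F$ because $q[\G]\leq\F$. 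Thus $q(U)$ is open. The closed-map property then follows by complementation: if $C$ is closed in $\pt L$, then $\pt L\setminus C$ is open and, by Lemma \ref{lem:saturatedoepnclosed}, $C$ is saturated; using saturation of $C$ and surjectivity of $q$ one checks that $\pt'L\setminus q(C)=q(\pt L\setminus C)$, which is open by the previous step, so $q(C)$ is closed.

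For the topological identity, recall that the open subsets of $\T(\pt L)$ are exactly the open subsets of $\pt L$, while the open subsets of $q^{-}(\T\pt'L)$ are exactly the preimages $q^{-1}(V)$ of open subsets $V$ of $\pt'L$. If $U$ is open in $\pt L$, then $U=q^{-1}(q(U))$ with $q(U)$ open in $\pt'L$ by the open-map property, so $U$ is $q^{-}(\T\pt'L)$-open; conversely, $q$ is continuous since $\pt'L$ carries the quotient convergence, so $q^{-1}(V)$ is open in $\pt L$ whenever $V$ is open in $\pt'L$. Hence $\T(\pt L)=q^{-}(\T\pt'L)$. Finally $q^{-}(\T\pt'L)=q^{-}(\T q(\pt L))$ holds simply because $q$ is onto, so that $q(\pt L)=\pt'L$.

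The argument is essentially routine once Lemma \ref{lem:saturatedoepnclosed} is available, so I do not expect a genuine obstacle; the only two points that need a little attention are getting the direction of the order $q[\G]\leq\F$ right when unwinding the quotient formula in the open-map step, and the set-theoretic verification that $\pt'L\setminus q(C)=q(\pt L\setminus C)$, both of which go through precisely because the sets in play are saturated.
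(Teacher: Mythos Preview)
Your proof is correct and is exactly the argument the paper intends: the corollary is stated there without proof, as an immediate consequence of Lemma \ref{lem:saturatedoepnclosed}, and your derivation via saturation, the explicit quotient formula, and continuity of $q$ is the natural way to spell out the details. The only cosmetic remark is that your closed-map step could equally well be done directly (a symmetric argument to the open case), but the complement route you chose is perfectly valid since both $C$ and its complement are saturated.
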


\begin{lem}
\label{lem:cbulletcompl} If $c$ is a closed element of $(L,\lim_{L})$
then 
\[
q^{-1}(\pt'L\setminus\downarrow c)=\pt L\setminus c^{\bullet}.
\]
\end{lem}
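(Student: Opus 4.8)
The plan is to unwind both sides of the claimed identity into the single condition $\lim_{L}x\not\leq c$ on a point $x\in\pt L$, using the interpretation of $\pt'L$ and $q$ recorded in Remark~\ref{rem:qislim} together with the characterization (\ref{eq:closedinpoint}) of membership of a closed element in a point.

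First I would recall that, under the identification $\pt'L=\{\lim_{L}x:x\in\pt L\}\subset L$, the canonical surjection is $q=\lim'_{L}$, so that $q(x)=\lim_{L}x$ for every $x\in\pt L$. Since $\downarrow c$ is computed in $L$ and $\pt'L\subseteq L$, the condition $q(x)\in\pt'L\setminus\downarrow c$ amounts exactly to $\lim_{L}x\not\leq c$; hence $q^{-1}(\pt'L\setminus\downarrow c)=\{x\in\pt L:\lim_{L}x\not\leq c\}$. For the right-hand side, by definition $x\in c^{\bullet}$ if and only if $c\in x$ (viewing $x$ as a filter on $L$); because $c$ is a closed element, (\ref{eq:closedinpoint}) gives $c\in x\iff\lim_{L}x\leq c$, so that $\pt L\setminus c^{\bullet}=\{x\in\pt L:\lim_{L}x\not\leq c\}$. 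The two descriptions coincide, which is the assertion.

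I do not expect a genuine obstacle here; the only subtlety worth flagging is that the equivalence $c\in x\iff\lim_{L}x\leq c$ genuinely uses that $c$ is closed (the implication $\lim_{L}x\leq c\Rightarrow c\in x$ holds for any $c$ since $\lim_{L}x\in x$, but the converse is precisely the defining property of a closed element), so the closedness hypothesis is essential and cannot be weakened.
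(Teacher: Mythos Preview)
Your proof is correct and follows essentially the same route as the paper's: both reduce each side to the condition $\lim_{L}x\nleq c$ via the identification $q(x)=\lim_{L}x$ from Remark~\ref{rem:qislim} and the equivalence (\ref{eq:closedinpoint}). Your added remark that closedness of $c$ is genuinely used for the direction $c\in x\Rightarrow\lim_{L}x\leq c$ is accurate and worth keeping.
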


\begin{proof}
$x\in q^{-1}(\pt'L\setminus\downarrow c)$ if and only if $\lim_{L}x\nleq c$,
which, in view of (\ref{eq:closedinpoint}), is equivalent to $c\notin x$,
that is, to $x\in\pt L\setminus c^{\bullet}$.
\end{proof}
Note that if $L=(\mathbb{P}X,\lim_{\xi})$ then for every open subset
$U$ of $\pt L$ there is an open element $\ell$ of $L$ with $U=\ell^{\bullet}$
and similarly for closed sets. Indeed:

\begin{prop}
\label{prop:UonXbullet-1} Let $L=(\mathbb{P}X,\lim_{\xi})$ . Every
subset $U$ of $\pt L$ that is either open or closed satisfies $U=(U\cap X)^{\bullet}$.
\end{prop}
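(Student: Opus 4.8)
The plan is to reduce the statement to proving the single inclusion $U \subseteq (U\cap X)^{\bullet}$ for $U$ open and, separately, for $U$ closed, and then to recover the full equalities by complementation. Recall that, by (\ref{eq:pointslattices}), the elements of $\pt L$ are (irreducible) ultrafilters on $X$, and that $A^{\bullet} = \{\G \in \pt L : A \in \G\}$ for $A \subseteq X$; since each $\G$ is an ultrafilter, $A \notin \G$ is equivalent to $X \setminus A \in \G$, so $\pt L \setminus A^{\bullet} = (X\setminus A)^{\bullet}$. Identifying $X$ with its dense copy $h(X) \subseteq \pt L$ (with $h(x) = \{x\}^{\uparrow}$, as in Proposition \ref{prop:subspaceofptL}), if $\O$ is open and $\C := \pt L \setminus \O$, then $\O\cap X$ and $\C\cap X$ partition $X$, so the inclusion $\O \subseteq (\O\cap X)^{\bullet}$ complements to $(\C\cap X)^{\bullet} = (X\setminus(\O\cap X))^{\bullet} = \pt L \setminus (\O\cap X)^{\bullet} \subseteq \C$, and symmetrically $\C \subseteq (\C\cap X)^{\bullet}$ complements to $(\O\cap X)^{\bullet} \subseteq \O$. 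Thus the two inclusions together give $\O = (\O\cap X)^{\bullet}$ and $\C = (\C\cap X)^{\bullet}$.

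First I would handle $U = \O$ open. Given $\G \in \O$, I recall from the proof of Proposition \ref{prop:subspaceofptL} that $\G \in \lim_{\pt L} h[\G]$ (this uses only that $\G$ is irreducible, i.e.\ $\lim_{\xi}\G \in \G$, and that $\G \subseteq h[\G]^{\circ}$). Then $\lim_{\pt L} h[\G] \cap \O \neq \emptyset$, and openness of $\O$ forces $\O \in h[\G]$, that is $h^{-1}(\O) = \O \cap X \in \G$, i.e.\ $\G \in (\O\cap X)^{\bullet}$. This proves $\O \subseteq (\O\cap X)^{\bullet}$.

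Then I would treat $U = \C$ closed. Given $\G \in \C$, we have $\C \in \{\G\}^{\uparrow}$, so closedness of $\C$ yields $\lim_{\pt L}\{\G\}^{\uparrow} \subseteq \C$. By Lemma \ref{lem:principalofpoints}, $(\{\G\}^{\uparrow})^{\circ} = \G$, so $\lim_{\pt L}\{\G\}^{\uparrow} = (\lim_{\xi}\G)^{\bullet}$, whence $(\lim_{\xi}\G)^{\bullet} \subseteq \C$. For each $x \in \lim_{\xi}\G$ one has $h(x) = \{x\}^{\uparrow} \in (\lim_{\xi}\G)^{\bullet} \subseteq \C$, so $\lim_{\xi}\G \subseteq h^{-1}(\C) = \C \cap X$; since $\G$ is irreducible, $\lim_{\xi}\G \in \G$, hence $\C \cap X \in \G$, i.e.\ $\G \in (\C\cap X)^{\bullet}$. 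This proves $\C \subseteq (\C\cap X)^{\bullet}$.

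The step I expect to be the main obstacle is this last inclusion $\C \subseteq (\C\cap X)^{\bullet}$: trying to imitate the open case by feeding $h[\G]$ into the closedness of $\C$ is circular, as it would require $\C \in h[\G]$, which is precisely the conclusion $\C\cap X \in \G$ we are after. The way around it is to test $\C$ against the \emph{principal} ultrafilter $\{\G\}^{\uparrow}$ at the point $\G$ itself and to compute its $\pt L$-limit explicitly through Lemma \ref{lem:principalofpoints}, which reduces matters to the near-trivial fact that $\lim_{\xi}\G$ belongs to the ultrafilter $\G$.
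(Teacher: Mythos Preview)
Your proof is correct, and the two direct inclusions you establish (the open case $\O\subseteq(\O\cap X)^{\bullet}$ via $\G\in\lim_{\pt L}h[\G]$, and the closed case $\C\subseteq(\C\cap X)^{\bullet}$ via $\lim_{\pt L}\{\G\}^{\uparrow}=(\lim_{\xi}\G)^{\bullet}$) are exactly the arguments the paper uses for those same inclusions. The difference is organizational: the paper proves all four inclusions directly, treating $(\O\cap X)^{\bullet}\subseteq\O$ by testing openness against $\{\U\}^{\uparrow}$ (using that $\lim_{\xi}\U$ meets $U\cap X$ since both lie in $\U$), and $(\C\cap X)^{\bullet}\subseteq\C$ by testing closedness against $h[\U]$; you instead obtain these two reverse inclusions for free from the complementation identity $\pt L\setminus A^{\bullet}=(X\setminus A)^{\bullet}$, which holds precisely because the points of $\pt_{\mathbf{Lat}}L$ are ultrafilters. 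Your route is slightly more economical and makes explicit why the open and closed cases are formally dual here, while the paper's route has the virtue of giving self-contained arguments for each inclusion that do not rely on primeness of the points.
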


\begin{proof}
Assume that $U$ is open. Let $\U\in(U\cap X)^{\bullet}$, that is,
$(U\cap X)\in\U$. As $\lm_{\pt L}\{\U\}^{\uparrow}=(\lm_{\xi}(\{\U\}^{\uparrow})^{\circ})^{\bullet}=(\lm_{\xi}\U)^{\bullet}$
and there is $x\in\lim_{\xi}\U\cap U\cap X$, we conclude that $\{x\}^{\uparrow}\in(\lim_{\xi}\U)^{\bullet}\cap U$.
Hence $\lm_{\pt L}\{\U\}^{\uparrow}\cap U\neq\emptyset$ and $U$
is $\pt L$-open so that $U\in\{\U\}^{\uparrow}$, that is, $\U\in U$.
Conversely, if $\U\in U$, since $\U\in\lim_{\pt L}h[\U]$ where $h:X\to\pt L$
is $h(x)=\{x\}^{\uparrow}$ and $U$ is open, $U\cap X\in h[\U]$,
that is, $\U\in(U\cap X)^{\bullet}$.

Assume now that $U$ is closed. If $\U\in U$ then $\lm_{\pt L}\{\U\}^{\uparrow}=(\lm_{\xi}\U)^{\bullet}\subset U$
and thus $\lim_{\xi}\U\subset U\cap X$ so that $U\cap X\in\U$, that
is, $\U\in(U\cap X)^{\bullet}$. Conversely, if $\U\in(U\cap X)^{\bullet}$,
that is, $U\cap X\in\U$, then $\U\in\lim_{\pt L}h[\U]\subset U$
so that $\U\in U$.
\end{proof}
Let us say that a convergence lattice $(L,\lim_{L})$ \emph{has enough
closed elements }if for every closed subset $C$ of $\pt L$ there
is a closed element $c\in L$ with $C=c^{\bullet}$. We similarly
say that $(L,\lim_{L})$ \emph{has enough open elements} if every
open subset $O$ of $\pt L$ is of the form $u^{\bullet}$ for some
open element $u$ of $L$. By Proposition \ref{prop:UonXbullet-1},
$L=(\mathbb{P}X,\lim_{\xi})$ always has enough closed elements and
enough open elements.
\begin{thm}
\label{thm:uppertopology} If $(L,\lim_{L})$ is a convergence lattice
with enough closed elements, then the topological modification of
$\pt'L$ is given by the open sets
\[
\left\{ \pt L'\setminus\downarrow c:c\in\C_{L}\right\} .
\]
\end{thm}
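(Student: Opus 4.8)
The plan is to transport the statement back and forth along the canonical surjection $q=\lim'_{L}:\pt L\to\pt'L$, exploiting that, by Corollary \ref{cor:qopenclosed}, $q$ is at once continuous, open and onto. The key preliminary step is the observation that, under these three properties, a subset $U\subseteq\pt'L$ is $\pt'L$-open if and only if $q^{-1}(U)$ is $\pt L$-open: continuity gives the forward implication, while if $q^{-1}(U)$ is $\pt L$-open then $U=q\bigl(q^{-1}(U)\bigr)$ (by surjectivity) is $\pt'L$-open because $q$ is an open map. Since the open sets of $\T(\pt'L)$ are precisely the $\pt'L$-open sets, it then suffices to identify the $\pt'L$-open sets with $\{\pt'L\setminus\downarrow c:c\in\C_{L}\}$.

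For this I would first record, using (\ref{eq:closedinpoint}), that for a closed element $c$ of $L$ and $x\in\pt L$ one has $x\in c^{\bullet}\iff c\in x\iff\lim_{L}x\leq c$, so that $c^{\bullet}=q^{-1}(\pt'L\cap\downarrow c)$ and hence $q^{-1}(\pt'L\setminus\downarrow c)=\pt L\setminus c^{\bullet}$ (Lemma \ref{lem:cbulletcompl}). Then, given $c\in\C_{L}$, the set $c^{\bullet}$ is $\pt L$-closed by Lemma \ref{lem:closedelmttosubset}, so $q^{-1}(\pt'L\setminus\downarrow c)$ is $\pt L$-open and therefore $\pt'L\setminus\downarrow c$ is $\pt'L$-open; this gives one inclusion. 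Conversely, if $U$ is an arbitrary $\pt'L$-open set, then $q^{-1}(\pt'L\setminus U)=\pt L\setminus q^{-1}(U)$ is a closed subset of $\pt L$, so the hypothesis that $L$ has enough closed elements yields $c\in\C_{L}$ with $q^{-1}(\pt'L\setminus U)=c^{\bullet}=q^{-1}(\pt'L\cap\downarrow c)$; applying $q$ and using surjectivity (both sides being subsets of $\pt'L$) gives $\pt'L\setminus U=\pt'L\cap\downarrow c$, that is, $U=\pt'L\setminus\downarrow c$.

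The step I expect to be the main obstacle is the preliminary one: that openness in $\pt'L$ may be tested by pulling back along $q$. This is exactly where one needs $q$ to be continuous, open and surjective all at once, and it is Corollary \ref{cor:qopenclosed} that supplies this. The rest is a routine translation between closed elements $c$ of $L$ and their traces $c^{\bullet}$ on $\pt L$ via Lemma \ref{lem:closedelmttosubset}, Lemma \ref{lem:cbulletcompl} and (\ref{eq:closedinpoint}); the only bookkeeping point is that $\downarrow c$ is taken in $L$ in the sense of Remark \ref{rem:qislim}, so that $\pt'L\cap\downarrow c=\{\lim_{L}x:x\in\pt L,\ \lim_{L}x\leq c\}$.
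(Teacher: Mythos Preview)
Your proposal is correct and follows essentially the same route as the paper: both directions hinge on Lemma~\ref{lem:cbulletcompl} to identify $q^{-1}(\pt'L\setminus\downarrow c)$ with $\pt L\setminus c^{\bullet}$, then use the ``enough closed elements'' hypothesis for the converse and surjectivity of $q$ to cancel $q^{-1}$. The only cosmetic difference is that you isolate the equivalence ``$U$ is $\pt'L$-open $\iff$ $q^{-1}(U)$ is $\pt L$-open'' as an explicit preliminary via the open-map part of Corollary~\ref{cor:qopenclosed}, whereas the paper invokes it tacitly (this equivalence is in any case immediate for a quotient convergence).
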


\begin{proof}
In view of Lemma \ref{lem:cbulletcompl}, sets of the form $\pt L'\setminus\downarrow c$
for a closed element $c\in L$ are $\pt'L$-open, because their preimages
under $q$ are $\pt L$-open by Lemma \ref{lem:cbulletcompl}. If
now $V$ is $\pt'L$-open, that is, $q^{-1}(V)$ is $\pt L$-open,
then $\pt L\setminus q^{-1}(V)$ is closed, hence there is a closed
element $c$ of $L$ with $q^{-1}(V)=\pt L\setminus c^{\bullet}$.
In view of Lemma \ref{lem:cbulletcompl}, $q^{-1}(V)=q^{-1}(\pt'L\setminus\downarrow c)$
and $q$ is surjective, so that $V=\pt'L\setminus\downarrow c$.
\end{proof}
\begin{rem}
Recall that the \emph{upper topology of a poset $(X,\leq)$ }is that
in which sets $\{X\setminus\downarrow x:x\in X\}$ form a subbase
of open sets (e.g., \cite{MR3086734}). Note that \emph{when $L$
has closed limits} (that is, $\lim_{L}:\mathbb{F}L\to\C_{L}$), then
$\pt'L\subset\C_{L}$ and the topology induced on $\pt'L$ by the
upper topology of $\C_{L}$ is given by the subbase $\left\{ \pt L'\setminus\downarrow c:c\in\C_{L}\right\} $,
so that, in view of Theorem \ref{thm:uppertopology}, it coincides
with the quotient topology induced by $\lim'_{L}:\pt L\to\pt'L$ where
$\pt L$ carries the topological modification of its standard convergence.
\end{rem}

\section{$\protect\Z$-regularity and structure of $\protect\pt'L$}

If $\Z\subset L$ we say that $(L,\lim_{L})$ is $\Z$-\emph{regular
}if 
\[
\lm_{L}\F=\lm_{L}\uparrow(\F\cap\Z)
\]
for every $\F\in\mathbb{F}L$. This is a generalization of the notion
defined for convergence spaces in \cite[Section VII.2]{DM.book},
namely, if $\Z\subset\mathbb{P}X$, then $(X,\xi)$ is $\Z$-\emph{regular
}if $\lim_{\xi}\F=\lim_{\xi}(\F\cap\Z)^{\uparrow}$ for every filter
$\F\in\mathbb{FP}X$.

Note that a convergence space $(X,\xi)$ is $\O_{\xi}$-regular if
and only if it is topological (\footnote{Indeed, it is clear that a topological space is $\O_{\xi}$-regular,
and conversely, if $\xi$ is $\O_{\xi}$-regular, then 
\[
x\in\lim\{x\}^{\uparrow}=\lim\left(\{x\}^{\uparrow}\cap\O_{\xi}\right)^{\uparrow}=\lim\N_{\xi}(x),
\]
so that $\xi$ is topological.}), equivalently, the convergence lattice $L=(\mathbb{P}X,\lim_{\xi})$
is $\O_{L}$-regular. 

Note that:
\begin{prop}
\label{prop:openimageoftop} Let $f:(X,\xi)\to(Y,\tau)$ be a continuous,
onto, and open map. If $\xi$ is topological, so is $\tau$.
\end{prop}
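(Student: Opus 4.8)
The plan is to prove that $\tau$ equals its topological modification $\T\tau$. Since $\tau\ge\T\tau$ holds for every convergence, it suffices to show $\T\tau\ge\tau$, i.e. that $y\in\lim_{\tau}\G$ whenever $y\in\lim_{\T\tau}\G$. As the $\T\tau$-open sets are exactly the $\tau$-open sets, $\N_{\T\tau}(y)=(\O_{\tau}(y))^{\uparrow}=:\N_{\tau}(y)$, so $y\in\lim_{\T\tau}\G$ means exactly $\G\ge\N_{\tau}(y)$. By the monotone axiom it is therefore enough to establish that every $\tau$-neighborhood filter converges:
\[
y\in\lim_{\tau}\N_{\tau}(y)\qquad\text{for every }y\in Y.
\]

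Fix $y\in Y$ and, using surjectivity of $f$, choose $x\in X$ with $f(x)=y$. Since $\xi$ is topological, $x\in\lim_{\xi}\N_{\xi}(x)$, where $\N_{\xi}(x)=(\O_{\xi}(x))^{\uparrow}$. The key computation is $f[\N_{\xi}(x)]\subset\N_{\tau}(y)$: if $f^{-}(B)\in\N_{\xi}(x)$, pick a $\xi$-open $U$ with $x\in U\subset f^{-}(B)$; then $f(U)$ is $\tau$-open because $f$ is open, $y=f(x)\in f(U)$, and $f(U)\subset f(f^{-}(B))\subset B$, so $B\in\N_{\tau}(y)$. Hence $\N_{\tau}(y)\ge f[\N_{\xi}(x)]$. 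Now continuity of $f$ gives $y=f(x)\in f(\lim_{\xi}\N_{\xi}(x))\subset\lim_{\tau}f[\N_{\xi}(x)]$, and combining this with $\N_{\tau}(y)\ge f[\N_{\xi}(x)]$ via the monotone axiom yields $y\in\lim_{\tau}\N_{\tau}(y)$. Together with the reduction of the first paragraph this gives $\tau=\T\tau$, so $\tau$ is topological.

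The argument is short and I do not anticipate a serious obstacle; the two points that need care are the interpretation of ``open map'' --- namely that $f$ carries $\xi$-open subsets to $\tau$-open subsets, which is the only consequence of openness used, and only in the inclusion $f[\N_{\xi}(x)]\subset\N_{\tau}(y)$ --- and the bookkeeping identifying $\lim_{\T\tau}$-convergence to $y$ with domination by the $\tau$-neighborhood filter of $y$. Surjectivity is used solely to produce a point $x$ over $y$, and continuity solely to transport $x\in\lim_{\xi}\N_{\xi}(x)$ through $f$. Equivalently, one may package the same computation through the footnoted fact that a convergence is topological precisely when it is $\O$-regular: the displayed convergence of neighborhood filters gives $\lim_{\tau}\G=\lim_{\tau}(\G\cap\O_{\tau})^{\uparrow}$ for every filter $\G$ on $Y$.
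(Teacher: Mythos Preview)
Your proof is correct and uses essentially the same ingredients as the paper's: surjectivity to pick $x\in f^{-}(y)$, the fact that $x\in\lim_{\xi}\N_{\xi}(x)$ because $\xi$ is topological, openness of $f$ to see that $f[\N_{\xi}(x)]$ has a base of $\tau$-open sets containing $y$, and continuity plus monotonicity to conclude. The only packaging difference is that the paper phrases the conclusion as $\O$-regularity of $\tau$ (starting from an arbitrary $\G$ with $y\in\lim_{\tau}\G$ and showing $y\in\lim_{\tau}(\G\cap\O_{\tau})^{\uparrow}$, invoking an external reference for $\G\geq f[\N_{\xi}(x)]$), whereas you go straight to $y\in\lim_{\tau}\N_{\tau}(y)$; you yourself note this equivalence in your final paragraph, so the two arguments are really the same computation viewed through the two equivalent characterizations of ``topological'' given in the paper's footnote.
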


\begin{proof}
We show $\tau$ is $\O$-regular. To this end, let $y\in\lim_{\tau}\G$.
In view of \cite[Exercise XV.1.29]{DM.book}, $\G\geq f[\N_{\xi}(x)]$
for every $x\in f^{-}(y)$. Because $f$ is open $f[\N_{\xi}(x)]\subset(\G\cap\O_{\tau})^{\uparrow}$
and $y\in\lim_{\tau}f[\N_{\xi}(x)]$ by continuity. Hence, $y\in\lim_{\tau}(\G\cap\O_{\tau})^{\uparrow}$.
\end{proof}
Similarly a convergence space $(X,\xi)$ is topologically regular
(in the sense of \cite[Section VI.4]{DM.book}) if and only if it
is $\C_{\xi}$-regular, equivalently, the convergence lattice $L=(\mathbb{P}X,\lim_{\xi})$
is $\C_{L}$-regular.

In \cite{FredetJean}, a convergence lattice $L$ is called \emph{classical}
if it is regular with respect to the set of complemented elements
of $L$ (see \cite[Lemma 2.29]{FredetJean}). In \cite{myn.ptfreeAP}
and \cite{convframes}, $**$-regular convergence frames are considered.
They are those convergence frames that are $\Z$-regular for the set
$\Z=\{\ell\in L:\ell=\ell^{**}\}$, where $\ell^{*}$ denotes the
pseudocomplement of $\ell$ (note that every element of a frame has
a pseudocomplement). 
\begin{lem}
Let $(L,\lim_{L})$ be a convergence lattice. Then $\pt L$ is a $\Z$-regular
convergence space for $\Z=\left\{ A\subset\pt L:\exists\ell\in L\;\left(A=\ell^{\bullet}\right)\right\} $.
Moreover $\pt'L$ is $q(\Z)$-regular where $q(\Z)=\{q(A):A\in\Z\}$.
\end{lem}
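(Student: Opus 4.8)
The plan is to reduce both assertions to a single identity. I would first check that, for every filter $\F$ on $\pt L$,
\[
(\uparrow(\F\cap\Z))^{\circ}=\F^{\circ}.
\]
Granting this, the $\Z$-regularity of $\pt L$ follows at once from (\ref{eq:ptconv}): indeed $\lim_{\pt L}\uparrow(\F\cap\Z)=(\lim_{L}(\uparrow(\F\cap\Z))^{\circ})^{\bullet}=(\lim_{L}\F^{\circ})^{\bullet}=\lim_{\pt L}\F$. The identity is a quick set-theoretic verification: if $\ell\in\F^{\circ}$, i.e.\ $\ell^{\bullet}\in\F$, then $\ell^{\bullet}$ already lies in $\F\cap\Z$, hence in $\uparrow(\F\cap\Z)$, so $\ell\in(\uparrow(\F\cap\Z))^{\circ}$; conversely, if $\ell^{\bullet}\in\uparrow(\F\cap\Z)$ there is $A\in\F\cap\Z$ with $A\subseteq\ell^{\bullet}$, and since $A\in\F$ with $\F$ up-closed we get $\ell^{\bullet}\in\F$. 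Two preliminaries feed into this and are reused below: $\Z$ is closed under finite intersections, because the points of $L$, identified with filters on $L$, are closed under finite meets, so $\ell^{\bullet}\cap m^{\bullet}=(\ell\wedge m)^{\bullet}$ (the join-version of this was used in the proof of Proposition \ref{prop:ptLweaklysober}); and $\pt L=\top^{\bullet}\in\Z$ since every point contains $\top$. Hence $\F\cap\Z$ is a nonempty filter base and $\uparrow(\F\cap\Z)$ is a genuine filter, necessarily coarser than $\F$.

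For $\pt'L=\pt L/\!\sim$ with quotient map $q=\lim'_{L}$, I would transport the $\Z$-regularity of $\pt L$ along $q$. Recall that the quotient convergence is $\lim_{\pt'L}\H=\bigcup_{q[\G]\leq\H}q(\lim_{\pt L}\G)$. Since $\uparrow(\H\cap q(\Z))\subseteq\H$, the monotone axiom gives $\lim_{\pt'L}\uparrow(\H\cap q(\Z))\subseteq\lim_{\pt'L}\H$ for free, so only the reverse inclusion requires work. Take $x\in\lim_{\pt'L}\H$ and choose a filter $\G$ on $\pt L$ with $q[\G]\leq\H$ and $x\in q(\lim_{\pt L}\G)$; set $\G':=\uparrow(\G\cap\Z)$, which is a filter on $\pt L$ by the preliminaries. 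By the $\Z$-regularity of $\pt L$ just proved, $\lim_{\pt L}\G'=\lim_{\pt L}\G$, so $x\in q(\lim_{\pt L}\G')$, and it only remains to verify $q[\G']\leq\uparrow(\H\cap q(\Z))$. Given $B\subseteq\pt'L$ with $q^{-1}(B)\in\G'$, pick $Z\in\G\cap\Z$ with $Z\subseteq q^{-1}(B)$; then $q(Z)\subseteq q(q^{-1}(B))=B$ by surjectivity of $q$, $q(Z)\in q(\Z)$ by definition, and $q(Z)\in q[\G]\subseteq\H$ because $Z\subseteq q^{-1}(q(Z))$ together with $Z\in\G$ and $\G$ up-closed yields $q^{-1}(q(Z))\in\G$. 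Thus $q(Z)\in\H\cap q(\Z)$ with $q(Z)\subseteq B$, so $B\in\uparrow(\H\cap q(\Z))$; hence $q[\G']\leq\uparrow(\H\cap q(\Z))$ and $x\in\lim_{\pt'L}\uparrow(\H\cap q(\Z))$, finishing the argument.

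Everything here is essentially bookkeeping, so I anticipate the main difficulty to be organizational rather than conceptual. The step needing the most care is the quotient transfer: one must be sure that $\G'=\uparrow(\G\cap\Z)$ is genuinely a filter before inserting it into the quotient-convergence formula, which is exactly why the preliminary that $\Z$ is closed under finite intersections (resting on points being meet-closed filters on $L$) is indispensable, and one must keep the passages $\G\leftrightarrow q[\G]$ and $\G'\leftrightarrow q[\G']$ straight against the containment $q(Z)\subseteq B$. Beyond this I expect no real obstacle: the whole proof is powered by the identity $(\uparrow(\F\cap\Z))^{\circ}=\F^{\circ}$ together with the fact that $\Z$-regularity is inherited by the open surjective quotient $q$ of Corollary \ref{cor:qopenclosed}.
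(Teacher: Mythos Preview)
Your proposal is correct and follows essentially the same route as the paper: both reduce the $\Z$-regularity of $\pt L$ to the identity $\F^{\circ}=(\uparrow(\F\cap\Z))^{\circ}$, and both transport this to $\pt'L$ by replacing $\G$ with $\uparrow(\G\cap\Z)$ in the quotient-convergence formula and observing that $q(\G\cap\Z)\subset\H\cap q(\Z)$. Your version is more careful in two respects --- you explicitly verify that $\F\cap\Z$ is a filter base (via $\ell^{\bullet}\cap m^{\bullet}=(\ell\wedge m)^{\bullet}$ and $\top^{\bullet}=\pt L$), and you spell out the inclusion $q[\G']\leq\uparrow(\H\cap q(\Z))$ in full --- but the underlying argument is the same. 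One small remark: your closing reference to Corollary~\ref{cor:qopenclosed} is unnecessary, since your transfer step uses only the surjectivity of $q$ and the quotient formula, not openness.
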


\begin{proof}
$\Z$-regularity of $\pt L$ follows from $\lim_{\pt L}\F=(\lim_{L}\F^{\circ})^{\bullet}$
and $\F^{\circ}=((\F\cap\Z)^{\uparrow})^{\circ}$. Now,

\begin{eqnarray*}
\lm_{\pt'L}\F & = & \bigcup_{q[\G]\leq\F}q(\lm_{\pt L}\G)\\
 & = & \bigcup_{q[\G]\leq\F}q(\lm_{\pt L}(\G\cap\Z)^{\uparrow})
\end{eqnarray*}
 and $q(\G\cap\Z)\subset\F\cap\Z\subset\F$ so that 
\[
\lm_{\pt'L}\F=\bigcup_{q[\G\cap\Z]\subset\F\cap\Z}q(\lm_{\pt L}(\G\cap\Z)^{\uparrow})\subset\lm_{\pt'L}(\F\cap\Z)^{\uparrow}.
\]
\end{proof}
\begin{lem}
\label{lem:simplifiedpt'} If $(L,\lim_{L})$ is $\Z$-regular where
$\ell^{\bullet}$ is saturated for every $\ell\in\Z$, then 
\[
\lm_{\pt'L}\U=q\left(\lm_{\pt L}q^{-1}[\U]\right)=q(\lm_{\pt L}\W),
\]
for every ultrafilter $\U$ on $\pt'L$ on every $\W\in\mathbb{U}(q^{-1}[\U])$.
\end{lem}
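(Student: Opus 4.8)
The statement is a "reduction to ultrafilters" fact: for a $\Z$-regular lattice with $\ell^\bullet$ saturated for $\ell \in \Z$, the quotient convergence on $\pt'L$ at an ultrafilter $\U$ is already achieved by pushing forward the $\pt L$-limit of *any* single ultrafilter refining $q^{-1}[\U]$, rather than taking the union over all $\G$ with $q[\G] \le \U$. The plan is to prove the two displayed equalities in turn. For the first, $\lim_{\pt'L}\U = q(\lim_{\pt L} q^{-1}[\U])$: the inclusion $\supseteq$ is immediate from the definition of the quotient convergence, since $\G = q^{-1}[\U]$ satisfies $q[\G] \le \U$ (as $q$ is onto, $q[q^{-1}[\U]] = \U$). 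For $\subseteq$, take any $\G$ with $q[\G] \le \U$; by $\Z$-regularity of $\pt L$ we may replace $\G$ by $(\G \cap \Z)^\uparrow$ without changing $\lim_{\pt L}\G$, so assume $\G$ has a filter base in $\Z$. Since every $\ell^\bullet$ with $\ell \in \Z$ is saturated, every $A \in \G$ with $A \in \Z$ satisfies $A = q^{-1}(q(A))$, and $q(A) \in \U$ because $q[\G] \le \U$; hence $A = q^{-1}(q(A)) \in q^{-1}[\U]$. Therefore $\G \supseteq q^{-1}[\U]$ (at least on the base in $\Z$, which suffices), so $\lim_{\pt L}\G \subseteq \lim_{\pt L} q^{-1}[\U]$ by monotonicity, giving $q(\lim_{\pt L}\G) \subseteq q(\lim_{\pt L} q^{-1}[\U])$. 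Taking the union over all such $\G$ yields $\subseteq$.

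For the second equality, $q(\lim_{\pt L} q^{-1}[\U]) = q(\lim_{\pt L}\W)$ for any $\W \in \mathbb{U}(q^{-1}[\U])$: the inclusion $\subseteq$ is monotonicity of $\lim_{\pt L}$ under $\W \ge q^{-1}[\U]$. For $\supseteq$, I would argue that $\W$ and $q^{-1}[\U]$ agree on $\Z$: indeed $q^{-1}[\U]$ itself is saturated (it is a preimage under $q$), and since $\W \in \mathbb{U}(q^{-1}[\U])$ and $q^{-1}[\U]$ contains, for each $\ell \in \Z$, exactly one of $\ell^\bullet$ and its complement when $\ell^\bullet$ is meaningful — more carefully, using that $q^{-1}[\U]$ is the full preimage filter, one checks $\W \cap \Z = q^{-1}[\U] \cap \Z$. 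Then $\Z$-regularity of $\pt L$ gives $\lim_{\pt L}\W = \lim_{\pt L}(\W \cap \Z)^\uparrow = \lim_{\pt L}(q^{-1}[\U] \cap \Z)^\uparrow = \lim_{\pt L} q^{-1}[\U]$, and applying $q$ finishes this direction.

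The main obstacle, and the step to treat with care, is the claim that $\W$ and $q^{-1}[\U]$ carry the same trace on $\Z$ for an arbitrary ultrafilter $\W$ refining $q^{-1}[\U]$. The point is that for $\ell \in \Z$, either $\ell^\bullet \in q^{-1}[\U]$ or the complement of $\ell^\bullet$ — but the complement of a saturated set need not be of the form $m^\bullet$ for $m \in \Z$, so one cannot directly invoke saturation on the complement. The correct route is: $\ell^\bullet$ is saturated, so $\ell^\bullet = q^{-1}(q(\ell^\bullet))$; hence $\ell^\bullet \in \W$ iff $q(\ell^\bullet) \in q[\W] = \U$ (this uses $q$ onto and $\ell^\bullet$ saturated, so that $q(\ell^\bullet) \in q[\W] \Rightarrow \ell^\bullet = q^{-1}(q(\ell^\bullet)) \in q^{-1}[q[\W]] = \W$ by saturation of $\W$-members that are saturated), and likewise $\ell^\bullet \in q^{-1}[\U]$ iff $q(\ell^\bullet) \in \U$; since $q[\W] = \U$ these conditions coincide, establishing $\W \cap \Z = q^{-1}[\U] \cap \Z$. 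Once this is nailed down the rest is routine monotonicity and the definition of the quotient convergence, so I would state the trace-on-$\Z$ identity as the key internal claim and spend the bulk of the proof there.
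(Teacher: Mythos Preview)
Your core argument is the same as the paper's: the decisive step is that for $\ell\in\Z$ the membership $\ell^{\bullet}\in\W$ depends only on whether $q(\ell^{\bullet})\in\U$, so $\W^{\circ}\cap\Z=(q^{-1}[\U])^{\circ}\cap\Z$ for every $\W\in\mathbb{U}(q^{-1}[\U])$, and $\Z$-regularity of $L$ then forces $\lim_{L}\W^{\circ}=\lim_{L}(q^{-1}[\U])^{\circ}$. The paper organizes slightly differently, first rewriting $\lim_{\pt'L}\U$ as $\bigcup_{\W\in\mathbb{U}(q^{-1}[\U])}q(\lim_{\pt L}\W)$ (a general ultrafilter fact for quotient convergences) and then applying the trace identity once, whereas you apply the saturation argument twice; but the substance is identical.

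Two points need cleaning up. First, $\Z$ is a subset of $L$, not of $\mathbb{P}(\pt L)$, so writing ``$\G\cap\Z$'' for a filter $\G$ on $\pt L$ is ill-typed; what you mean is the family $\{\ell^{\bullet}:\ell\in\Z,\,\ell^{\bullet}\in\G\}$, or equivalently you should pass to $\G^{\circ}\cap\Z\subset L$ and invoke $\Z$-regularity of $L$ directly (as the paper does) rather than a derived regularity of $\pt L$. Second, your containment ``$\G\supseteq q^{-1}[\U]$'' is reversed: what you have established is that every saturated basic set $\ell^{\bullet}$ of $\G$ lies in the filter $q^{-1}[\U]$, hence the filter those sets generate is \emph{coarser} than $q^{-1}[\U]$, i.e., contained in it as a family. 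Monotonicity then still yields your stated conclusion $\lim_{\pt L}\G\subset\lim_{\pt L}q^{-1}[\U]$, so the argument survives, but the intermediate inclusion should read the other way.
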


\begin{proof}
Let $L$ be $\Z$-regular. Because $\U$ is an ultrafilter, $\lim_{\pt'L}\U=\bigcup_{\W\in\mathbb{U}(q^{-1}[\U])}q((\lim_{L}\W^{\circ})^{\bullet})$.
Since $\lim_{L}\W^{\circ}=\lim_{L}\uparrow(\W^{\circ}\cap\Z)$ by
$\Z$-regularity, it is enough to show that $\W^{\circ}\cap\Z=\Z\cap(q^{-1}[\U])^{\circ}$
whenever $\W\in\mathbb{U}(q^{-1}[\U])$. This follows from the observation
that $\ell\in\W^{\circ}\cap\Z$ if and only if $\ell\in\Z$ and $\ell^{\bullet}\in\W$,
and $\ell^{\bullet}$ is saturated and thus $\ell^{\bullet}\in q^{-1}[q[\W]]=q^{-1}[\U]$.
\end{proof}
In view of Lemma \ref{lem:saturatedoepnclosed}, Lemma \ref{lem:simplifiedpt'}
applies in particular if $\Z=\O_{L}$ or $\Z=\C_{L}$.
\begin{prop}
\label{prop:Oregulartopt} If $(L,\lim_{L})$ is $\O_{L}$-regular
and has enough open elements, then $\pt L$ is topological. If $\pt L$
is topological, so is $\pt'L$.
\end{prop}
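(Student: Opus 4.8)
The plan is to prove the two implications separately; the first is the substantive one. I would reduce it to showing that $\pt L$ is $\O_{\pt L}$-regular, since (by the remark following the definition of $\Z$-regularity, with its footnote) a convergence space is topological if and only if it is regular with respect to its own open sets. Everything will revolve around the formula $\lm_{\pt L}\F=(\lm_L\F^\circ)^\bullet$ from (Conv on pt), together with the fact that $\ell\mapsto\ell^\bullet$ is monotone.

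First I would record that, by Lemma \ref{lem:closedelmttosubset} together with the hypothesis that $L$ has enough open elements, the open subsets of $\pt L$ are exactly the sets $u^\bullet$ with $u\in\O_L$; consequently, for a filter $\F$ on $\pt L$ the coarser filter $(\F\cap\O_{\pt L})^\uparrow$ is generated by $\{u^\bullet:u\in\F^\circ\cap\O_L\}$. I would then establish the chain of inclusions $\uparrow(\F^\circ\cap\O_L)\subset\big((\F\cap\O_{\pt L})^\uparrow\big)^\circ\subset\F^\circ$: the first holds because $u\le\ell$ forces $u^\bullet\subset\ell^\bullet$ and each such $u^\bullet$ lies in $\F\cap\O_{\pt L}$; the second because $(\F\cap\O_{\pt L})^\uparrow$ is coarser than $\F$ and $(\cdot)^\circ$ is monotone. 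Applying the monotone map $\lm_L$ and invoking $\O_L$-regularity of $L$ (which collapses $\lm_L\uparrow(\F^\circ\cap\O_L)$ onto $\lm_L\F^\circ$) forces the three limits to coincide, so that, applying $\ell\mapsto\ell^\bullet$, $\lm_{\pt L}(\F\cap\O_{\pt L})^\uparrow=\big(\lm_L((\F\cap\O_{\pt L})^\uparrow)^\circ\big)^\bullet=(\lm_L\F^\circ)^\bullet=\lm_{\pt L}\F$. This is precisely $\O_{\pt L}$-regularity of $\pt L$, hence $\pt L$ is topological.

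For the second implication I would simply appeal to the machinery already set up for the canonical surjection $q=\lim'_L:\pt L\to\pt'L$: it is continuous (it is the quotient map onto the quotient convergence defining $\pt'L$), it is onto, and it is open by Corollary \ref{cor:qopenclosed}. Since $\pt L$ is assumed topological, Proposition \ref{prop:openimageoftop} applies verbatim and yields that $\pt'L$ is topological.

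The only genuinely delicate point is the bookkeeping in the first implication: verifying the sandwich $\uparrow(\F^\circ\cap\O_L)\subset((\F\cap\O_{\pt L})^\uparrow)^\circ\subset\F^\circ$ and keeping straight the interplay of the operations $(\cdot)^\circ$ and $(\cdot)^\bullet$. Only the outer two members of the chain are directly controlled by $\O_L$-regularity, and it is exactly the squeezing that transfers that regularity from $L$ to $\pt L$. Once the chain is in hand, both implications follow by straightforward invocations of results already proved (Lemma \ref{lem:closedelmttosubset}, Corollary \ref{cor:qopenclosed}, Proposition \ref{prop:openimageoftop}) together with the elementary characterization of topologicity via $\O$-regularity.
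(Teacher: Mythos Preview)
Your proposal is correct and follows essentially the same route as the paper: identify $\O_{\pt L}$ with $\{u^\bullet:u\in\O_L\}$ via Lemma~\ref{lem:closedelmttosubset} and the enough-open-elements hypothesis, use $\O_L$-regularity of $L$ to transfer to $\O_{\pt L}$-regularity of $\pt L$, and then invoke Corollary~\ref{cor:qopenclosed} and Proposition~\ref{prop:openimageoftop} for the second part. Your sandwich $\uparrow(\F^\circ\cap\O_L)\subset((\F\cap\O_{\pt L})^\uparrow)^\circ\subset\F^\circ$ is in fact a slight improvement over the paper's presentation, which writes the middle step as a direct equality $((\F\cap\{u^{\bullet}:u\in\O_{L}\})^{\uparrow})^{\circ}=\uparrow(\O_{L}\cap\F^{\circ})$; since $u^\bullet\subset\ell^\bullet$ need not force $u\le\ell$, the squeezing argument is the cleaner justification.
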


\begin{proof}
Since $L$ has enough open sets,
\begin{eqnarray*}
\lm_{\pt L}(\F\cap\O_{\pt L})^{\uparrow} & = & \lm_{\pt L}(\F\cap\{u^{\bullet}:u\in\O_{L}\})^{\uparrow}\\
 & = & \left(\lm_{L}((\F\cap\{u^{\bullet}:u\in\O_{L}\})^{\uparrow})^{\circ}\right)^{\bullet}\\
 & = & \left(\lm_{L}\uparrow(\O_{L}\cap\F^{\circ})\right)^{\bullet}\\
 & = & \left(\lm_{L}\F^{\circ}\right)^{\bullet}=\lm_{\pt L}\F,
\end{eqnarray*}
because $L$ is $\O_{L}$-regular. Hence $\pt L$ is $\O$-regular,
that is, topological. As $q$ is a continuous, onto, and open map
by Corollary \ref{cor:qopenclosed}, $\pt'L$ is also topological
by Proposition \ref{prop:openimageoftop}.
\end{proof}
\begin{cor}
\label{prop:pt'sober} If $L$ is $\O_{L}$-regular, has enough open
sets, and has closed limits then $\pt'L$ is a sober topological space.
\end{cor}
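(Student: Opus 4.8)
The plan is to combine the three properties already established for $\pt'L$ under the hypotheses of Corollary \ref{prop:pt'sober}: topologicality, aas, and weak quasi-sobriety, and to upgrade them to genuine sobriety via the description of the topology of $\pt'L$ from Theorem \ref{thm:uppertopology}. First I would record that the hypotheses propagate: by Proposition \ref{prop:Oregulartopt}, $\O_L$-regularity plus enough open elements already gives that $\pt L$, hence $\pt'L$, is topological, and since $L$ has closed limits, Proposition \ref{prop:limLpointclosed} gives that limits of principal ultrafilters are closed in $\pt L$; moreover Corollary \ref{cor:pt'aas} tells us $\pt'L$ is aas. On the other hand $\pt L$ is weakly quasi-sober by Proposition \ref{prop:ptLweaklysober}, and I would check that weak quasi-sobriety passes to $\pt'L$: an irreducible ultrafilter $\U$ on $\pt'L$ pulls back (choosing $\W\in\mathbb{U}(q^{-1}[\U])$) to an irreducible ultrafilter on $\pt L$ via Lemma \ref{lem:simplifiedpt'} (applicable since $\Z = \O_L$ and open elements give saturated $\ell^\bullet$ by Lemma \ref{lem:saturatedoepnclosed}), whose generic point maps down under $q$ to a generic point for $\U$ in $\pt'L$. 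Combining with Proposition \ref{prop:wsoberandqwsober}(3) (valid because limits of principal ultrafilters are closed in $\pt'L$ by Proposition \ref{prop:limLpointclosed}(2) applied to the quotient, or directly from Corollary \ref{cor:pt'aas}'s formula $\lim_{\pt'L}\{\lim_L x\}^\uparrow = \pt'L \cap {\downarrow}\lim_L x$), weak quasi-sober plus aas yields that $\pt'L$ is weakly sober, hence $T_0$.

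Next I would promote weak sobriety to full sobriety. Since $\pt'L$ is a $T_0$ topological space, it suffices by Proposition \ref{prop:weaklysoberimpliesober} to observe that limits of irreducible filters are closed; but here I would argue more directly using Theorem \ref{thm:uppertopology}, which identifies the open sets of $\pt'L$ as exactly $\{\pt'L \setminus {\downarrow}c : c \in \C_L\}$. In this topology the closure of a point $\lim_L x \in \pt'L$ is $\pt'L \cap {\downarrow}\lim_L x$, which by Corollary \ref{cor:pt'aas} is precisely $\lim_{\pt'L}\{\lim_L x\}^\uparrow$. So a set is the closure of a point exactly when it is of the form $\lim_{\pt'L}\{x\}^\uparrow$ for some $x \in \pt'L$. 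Thus an irreducible closed subset $C$ of $\pt'L$ — being closed and topological, its corresponding irreducible filter is the neighborhood filter $\N(C)$ or equivalently an ultrafilter meshing with every neighborhood — has a generic point as soon as every irreducible closed set arises as such a point-closure, which is exactly what weak sobriety plus the structure of the closure operator delivers. The cleanest route: an irreducible closed set $C$ corresponds (in a topological space) to an irreducible filter, which is refined by an irreducible ultrafilter $\U$; weak sobriety gives $\lim \U = \lim\{x\}^\uparrow$ for a unique $x$, and using that limits of principal ultrafilters are closed together with $\lim C \in C \subset \U$ forces $\lim \U = \lim C$, so $x$ is a generic point for $C$; uniqueness follows from $T_0$-ness (Lemma \ref{lem:equallimpoints}, since limits of principal ultrafilters are closed). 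Hence $\pt'L$ is sober.

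The step I expect to be the main obstacle is verifying that weak quasi-sobriety actually descends along the quotient map $q \colon \pt L \to \pt'L$ — one must be careful that an irreducible ultrafilter on $\pt'L$ has a preimage filter on $\pt L$ whose ultrafilter refinements are irreducible and that the resulting generic point survives the projection; this is where Lemma \ref{lem:simplifiedpt'} (and hence $\O_L$-regularity, which is used only here and in Proposition \ref{prop:Oregulartopt}) is essential, together with the saturation of basic open sets. A secondary subtlety is making sure the ``closure of a point'' computation genuinely matches the quotient topology rather than merely the upper topology of $\C_L$ — but the Remark following Theorem \ref{thm:uppertopology} says these agree under ``closed limits,'' so I would cite that to close the gap. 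Once these are in hand, the conclusion is immediate: $\pt'L$ is a topological space in which every irreducible closed set has a unique generic point, i.e., a sober topological space.
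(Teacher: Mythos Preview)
Your proposal is correct and follows essentially the same line as the paper: pull back an irreducible ultrafilter on $\pt'L$ along $q$ via Lemma~\ref{lem:simplifiedpt'} (using saturation of $\lim_{\pt L}\W$, which is closed by Proposition~\ref{prop:limLpointclosed} since $L$ has closed limits) to get an irreducible ultrafilter on $\pt L$, invoke weak quasi-sobriety of $\pt L$ (Proposition~\ref{prop:ptLweaklysober}) for a generic point, push it down via Corollary~\ref{cor:pt'aas} to get weak quasi-sobriety of $\pt'L$, combine with aas to obtain weak sobriety, and then use topologicality (Proposition~\ref{prop:Oregulartopt}) together with Proposition~\ref{prop:weaklysoberimpliesober} to conclude sobriety. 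Your ``cleanest route'' is exactly the paper's argument; the excursion through Theorem~\ref{thm:uppertopology} is unnecessary (and note that the hypotheses give enough \emph{open} elements, not enough closed elements, so that theorem is not directly available here---the paper avoids it entirely).
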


\begin{proof}
Let $\U$ be an irreducible ultrafilter on $\pt'L$, that is, $\lim_{\pt'L}\U\in\U$.
In view of Lemma \ref{lem:simplifiedpt'}, $\lm_{\pt'L}\U=q(\lm_{\pt L}\W)$
for every $\W\in\mathbb{U}(q^{-1}[\U])$, so that
\[
q^{-1}(\lm_{\pt'L}\U)=q^{-1}(q(\lm_{\pt L}\W))=\lm_{\pt L}\W\in q^{-1}[\U]\subset\W
\]
 because each $\lim_{\pt L}\W$ is closed (because $L$, hence also
$\pt L$, has closed limits by Proposition \ref{prop:limLpointclosed})
hence saturated by Lemma \ref{lem:saturatedoepnclosed}. In other
words, each $\W$ is an irreducible ultrafilter so that $\lim_{\pt L}\W=\lim_{\pt L}\{x\}^{\uparrow}=(\lim_{L}x)^{\bullet}$
for some $x\in\pt L$ because, in view of Proposition \ref{lem:saturatedoepnclosed},
$\pt L$ is weakly quasi-sober. Thus $\lim_{\pt'L}\U=q((\lim_{L}x)^{\bullet})=\lm_{\pt'L}\{\lm_{L}x\}^{\uparrow}$
by Corollary \ref{cor:pt'aas}. Hence $\pt'L$ is weakly quasi-sober
and aas by Corollary \ref{cor:pt'aas}, hence weakly sober. In view
of Proposition \ref{prop:wsoberandqwsober} it is sober because it
is topological. 
\end{proof}
In view of Lemma \ref{lem:closedelmttosubset} and Proposition \ref{prop:UonXbullet-1},
$(\mathbb{P}X,\lim_{\xi})$ always has enough open elements and enough
closed elements. Moreover, it has closed limits when $\xi$ has closed
limits, in particular if $\xi$ is topological. Hence, Proposition
\ref{prop:Oregulartopt} and Corollary \ref{prop:pt'sober} apply
to the effect that:
\begin{cor}
\label{cor:pt'Ltopsober} If $(X,\xi)$ is a topological space and
$L=(\mathbb{P}X,\lim_{\xi})$ then $\pt L$ is topological and weakly
quasi-sober and $\pt'L$ is a topological sober space. 
\end{cor}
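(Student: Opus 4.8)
The plan is to obtain Corollary~\ref{cor:pt'Ltopsober} as a direct instance of the machinery already assembled, by checking that $L=(\mathbb{P}X,\lim_\xi)$ satisfies all the hypotheses of Proposition~\ref{prop:Oregulartopt} and Corollary~\ref{prop:pt'sober} when $\xi$ is topological. So the first step is to observe that a topological $\xi$ makes the convergence lattice $L=(\mathbb{P}X,\lim_\xi)$ an $\O_L$-regular convergence lattice: by the footnote remark, a topological space is $\O_\xi$-regular, which is exactly $\O_L$-regularity of $L$. Second, $L$ has enough open elements and enough closed elements by Proposition~\ref{prop:UonXbullet-1} (which gives $U=(U\cap X)^\bullet$ for every open or closed $U\subset\pt L$, and $U\cap X$ is then the required open, resp. closed, element of $L$). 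Third, since $\xi$ is topological its limits are closed (in a topology $\lim\F=\bigcap_{\F\ni F}\cl F$ is closed), hence $L$ has closed limits, i.e. $\lim_L:\mathbb{F}L\to\C_L$.

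With these three facts in hand, Proposition~\ref{prop:Oregulartopt} applies and yields that $\pt L$ is topological, and that $\pt'L$ is topological as well. Then Corollary~\ref{prop:pt'sober}, whose hypotheses ($\O_L$-regular, enough open sets, closed limits) are precisely the three facts just verified, gives that $\pt'L$ is a sober topological space. It remains only to record that $\pt L$ is weakly quasi-sober: this is Proposition~\ref{prop:ptLweaklysober}, which holds for \emph{every} convergence lattice, so in particular for $L=(\mathbb{P}X,\lim_\xi)$. Assembling these statements produces exactly the claim: $\pt L$ is topological and weakly quasi-sober, and $\pt'L$ is a topological sober space.

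I do not anticipate a genuine obstacle here, since the corollary is explicitly flagged in the text as an application of the two preceding results; the only thing to be careful about is the bookkeeping of which hypothesis comes from where. The one point worth stating cleanly rather than leaving implicit is that ``$\xi$ topological $\Rightarrow$ $L$ has closed limits'': one should note that for a topology, $\lim_\xi\F=\N_\xi(x)$-related points form the intersection $\bigcap_{F\in\F}\cl_\xi F$, which is $\xi$-closed, so $\lim_\xi$ indeed lands in $\C_L$; equivalently one may just cite the already-stated fact in the excerpt that in a topological space limits are closed, together with the identification of closed elements of $(\mathbb{P}X,\lim_\xi)$ with $\xi$-closed subsets. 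Everything else is a citation of Proposition~\ref{prop:Oregulartopt}, Corollary~\ref{prop:pt'sober}, Proposition~\ref{prop:ptLweaklysober}, Proposition~\ref{prop:UonXbullet-1}, and the $\O_\xi$-regularity footnote, so the proof is short.

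\begin{proof}
Let $\xi$ be a topology on $X$ and $L=(\mathbb{P}X,\lim_\xi)$. As noted above, a topological space is $\O_\xi$-regular, which means precisely that $L$ is $\O_L$-regular. By Proposition~\ref{prop:UonXbullet-1}, every open subset of $\pt L$ is of the form $u^\bullet$ for an open element $u=U\cap X$ of $L$, and likewise every closed subset of $\pt L$ is $c^\bullet$ for a closed element $c$ of $L$; hence $L$ has enough open elements and enough closed elements. Finally, limits in a topological space are closed, and the closed elements of $(\mathbb{P}X,\lim_\xi)$ are exactly the $\xi$-closed subsets of $X$, so $\lim_L=\lim_\xi:\mathbb{F}L\to\C_L$, i.e.\ $L$ has closed limits. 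By Proposition~\ref{prop:Oregulartopt}, $\pt L$ is topological and so is $\pt'L$. By Proposition~\ref{prop:ptLweaklysober}, $\pt L$ is weakly quasi-sober (this holds for any convergence lattice). By Corollary~\ref{prop:pt'sober}, since $L$ is $\O_L$-regular, has enough open elements, and has closed limits, $\pt'L$ is a sober topological space.
\end{proof}
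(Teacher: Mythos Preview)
Your proof is correct and follows essentially the same approach as the paper: the paragraph immediately preceding the corollary in the paper verifies exactly the three hypotheses you list (enough open/closed elements via Lemma~\ref{lem:closedelmttosubset} and Proposition~\ref{prop:UonXbullet-1}, closed limits because $\xi$ is topological, and implicitly $\O_L$-regularity from the footnote), and then invokes Proposition~\ref{prop:Oregulartopt} and Corollary~\ref{prop:pt'sober}. Your write-up is slightly more explicit in citing Proposition~\ref{prop:ptLweaklysober} for the weak quasi-sobriety of $\pt L$, and in noting that $U\cap X$ is an open (resp.\ closed) element of $L$ because $(X,\xi)$ sits as a subspace of $\pt L$, but the route is identical.
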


In fact, in this case $\pt'L$ is exactly the \emph{sobrification}
of the topological space $X$, as we will see in Section \ref{sec:Sobrification}
below.

\section{Sobrification and the space $\protect\pt(\mathbb{P}X,\lim_{\xi})$\label{sec:Sobrification}}

Recall that a closed subset $C$ of a convergence space is \emph{irreducible
}if $C\subset D$ or $C\subset F$ whenever $C\subset D\cup F$ where
$D$ and $F$ are closed. Let us call $C$ \emph{c-irreducible }if
it satisfies this property even if $C$ is not necessarily closed.
Note that $C$ is c-irreducible if and only if for every open sets
$O$ and $U$ intersecting $C$, $O\cap U\cap C\neq\emptyset.$ Obviously,
$C$ is c-irreducible if and only if $\cl C$ is an irreducible closed
set.
\begin{lem}
\label{lem:cirreduciblesets} In a convergence space, limits of irreducible
filters are c-irreducible and the closure of every c-irreducible set
is the limit of a $\T\xi$-irreducible ultrafilter. 
\end{lem}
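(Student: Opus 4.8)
The statement has two halves, and I would handle them separately. For the first half, suppose $\F$ is an irreducible filter, so $\lim_\xi\F\in\F$, and let $O,U$ be two open sets each meeting $C:=\lim_\xi\F$. I want $O\cap U\cap C\neq\emptyset$. The key observation is that an open set meeting $\lim_\xi\F$ must already belong to $\F$: indeed, by the characterization (\ref{eq:open}) of $\xi$-open sets, $\lim_\xi\F\cap O\neq\emptyset$ forces $O\in\F$, and likewise $U\in\F$. Hence $O\cap U\in\F$. Since $\F$ is irreducible, $\lim_\xi\F\in\F$, so $O\cap U$ and $\lim_\xi\F$ both lie in the filter $\F$ and therefore $(O\cap U)\cap\lim_\xi\F\in\F$; in particular it is nonempty. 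This shows $C=\lim_\xi\F$ is c-irreducible. (No closedness of $C$ is used, which is exactly why we phrase it with ``c-irreducible'' rather than ``irreducible''.)

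For the second half, let $C$ be a c-irreducible subset of $(X,\xi)$. I would first pass to the topological modification $\T\xi$: since the $\xi$-open sets are exactly the $\T\xi$-open sets, $C$ is also c-irreducible in $(X,\T\xi)$, so $\cl_\xi C=\cl_{\T\xi}C$ is an irreducible closed set of the topological space $(X,\T\xi)$. Now I want an ultrafilter $\U$ on $X$, irreducible for $\T\xi$, with $\lim_{\T\xi}\U=\cl C$. The natural candidate is built from the family $\G=\{D\subset X: D \text{ is } \T\xi\text{-open and } D\cap C\neq\emptyset\}^{\#}$, or more transparently: consider the filter generated by $\{O : O$ $\T\xi$-open, $C\subset\cl(O\cap C)\}$ together with the sets containing $\cl C$ --- essentially the filter of ``$\T\xi$-neighborhoods relative to $C$''. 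Concretely, the trace on $C$ of the $\T\xi$-open sets meeting $C$ is a filter base on $C$ (this is precisely c-irreducibility: any two such sets have intersection meeting $C$), so it extends to an ultrafilter $\U$ on $X$ with $C\in\U^{\#}$; every $\T\xi$-open set meeting $\cl C$ meets $C$ hence lies in $\U$, so $\cl C\subset\lim_{\T\xi}\U$, while the $\T\xi$-closed set $\cl C$ belongs to $\U$ giving $\lim_{\T\xi}\U\subset\cl C$; thus $\lim_{\T\xi}\U=\cl C$, and since $\cl C\in\U$, $\U$ is $\T\xi$-irreducible.

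The step I expect to be the main obstacle is the correct construction of the ultrafilter $\U$: one must choose a filter whose adherence/limit in $\T\xi$ is exactly $\cl C$ and not something larger or smaller, then invoke c-irreducibility to guarantee that the generating family is a genuine filter (finite intersection property), and finally check that \emph{every} ultrafilter refining it --- or at least one such --- has limit precisely $\cl C$. The delicate point is the inclusion $\cl C\subset\lim_{\T\xi}\U$: it requires that $\U$ contain every $\T\xi$-neighborhood of every point of $\cl C$, which follows because such a neighborhood contains a $\T\xi$-open set meeting $\cl C$, hence meeting $C$, hence (being open and meeting $C$) a member of the filter we started with --- here one uses that in a topological space, if an open set meets $\cl C$ it meets $C$. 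Once $\U$ is pinned down, the irreducibility and the limit computation are routine, using only (\ref{eq:open}), (\ref{eq:closed}), and the fact that $\cl_\xi$ and $\cl_{\T\xi}$ agree.
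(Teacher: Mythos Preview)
Your argument is correct and follows essentially the same route as the paper: for the first half the paper uses the dual closed-set formulation (if $\lim_\xi\F\subset D\cup F$ with $D,F$ closed then $D\cup F\in\F$, so $D\in\F^{\#}$ or $F\in\F^{\#}$, forcing $\lim_\xi\F\subset D$ or $\lim_\xi\F\subset F$), and for the second half both constructions take an ultrafilter refining the family of open sets meeting $C$. Your choice to work with the traces $O\cap C$---so that $C\in\U$ automatically---is in fact a touch cleaner than the paper's version, where the claims ``$X\setminus C\in\U$, which is not the case'' and ``$X\setminus\cl C\notin\U$'' are asserted without having explicitly placed $C$ (or $\cl C$) in the filter base.
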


\begin{proof}
If $\lim_{\xi}\F\in\F$ then $\lim_{\xi}\F$ is a c-irreducible set.
Indeed, if $\lim_{\xi}\F\subset D\cup F$ then $D\cup F\in\F\subset\F^{\#}$
so that either $D\in\F^{\#}$ or $F\in\F^{\#}$. Hence $\lim_{\xi}\F\subset D$
or $\lim_{\xi}\F\subset F$ because $D$ and $F$ are closed.

If $C$ is a c-irreducible set then $\{O\in\O_{\xi}:O\cap C\neq\emptyset\}$
is a filter base because $C$ is c-irreducible. Let $\U$ be an ultrafilter
containing this filter base. Note that $\U$ is finer than all $\N_{\xi}(x)$
for $x\in C$, so that $C\subset\lim_{\T\xi}\U$ hence $\cl C\subset\lim_{\T\xi}\U$.
Note that $\U$ is $\T\xi$-irreducible, for if $\lim_{\T\xi}\U\notin\U$
then $X\setminus C\in\U$ which is not the case. Moreover, if $t\notin\cl C$,
there is $O=X\setminus\cl C\in\N(t)$ that is disjoint from $C$ hence
does not belong to $\U$. In other words, $\lm_{\T\xi}\U=\cl C$.
\end{proof}
\begin{rem}
\label{rem:notaas} In particular, \emph{in a topological space every
irreducible closed set $C$ is the limit of an irreducible ultrafilter}--in
fact, of many, for every ultrafilter containing the filter base $\{O\in\O_{\xi}:O\cap C\neq\emptyset\}$
satisfies this condition. As a result, if $X$ is a $T_{0}$ topological
space then (\ref{eq:ptPaas}) is equivalent to every irreducible ultrafilters
being principal, a property characterized for topological spaces in
\cite[Theorem 2.2]{hoffmann1977irreducible} and more generally for
(some) convergence spaces in Corollary \ref{cor:soberTd} below to
be equivalent with $X$ being both sober and $T_{D}$. To see that
(\ref{eq:ptPaas}) implies that irreducible ultrafilters are principal,
note that if there is a free irreducible ultrafilter on a topological
space, then $C=\lim\U$ is an infinite irreducible closed set and
the non-maximal filter-base $\{O\in\O_{\xi}:O\cap C\neq\emptyset\}$
admits one, hence many finer free ultrafilters all of which admit
$C$ as their limit, so that (\ref{eq:ptPaas}) fails. Conversely,
if all irreducible ultrafilters are principal, then every irreducible
closed set has a generic point, which is necessarily unique in a $T_{0}$
topological space, so that (\ref{eq:ptPaas}) is satisfied.
\end{rem}

\begin{cor}
If $\F$ is an irreducible filter on a convergence space $(X,\xi)$
then 
\[
\cl_{\xi}(\lm_{\xi}\F)=\lm_{\T\xi}\F.
\]
\end{cor}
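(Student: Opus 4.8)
The plan is to prove the two inclusions directly, after reducing both sides to conditions on the $\xi$-open neighborhoods of a point. First I would record the two bookkeeping facts that make this possible. Since the $\xi$-closed sets are exactly the closed sets of the topology $\T\xi$ and are stable under arbitrary intersection, $\cl_\xi$ is the closure operator of $\T\xi$; hence $x\in\cl_\xi A$ if and only if every $\xi$-open set $O$ with $x\in O$ meets $A$. On the other hand, $\T\xi$ is a topology with $\N_{\T\xi}(x)=(\O_\xi(x))^{\uparrow}$, so $x\in\lim_{\T\xi}\F$ if and only if every $\xi$-open $O$ with $x\in O$ belongs to $\F$. Thus both $\cl_\xi(\lim_\xi\F)$ and $\lim_{\T\xi}\F$ are tested against the same family of sets, and the proof reduces to comparing the two conditions ``$O$ meets $\lim_\xi\F$'' and ``$O\in\F$''.

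For $\lim_{\T\xi}\F\subset\cl_\xi(\lim_\xi\F)$: let $x\in\lim_{\T\xi}\F$ and let $O$ be a $\xi$-open set with $x\in O$. By the second observation, $O\in\F$; since $\F$ is irreducible, $\lim_\xi\F\in\F$ as well, so $O\cap\lim_\xi\F\in\F$ and in particular is nonempty. As $O$ was an arbitrary $\xi$-open neighborhood of $x$, the first observation gives $x\in\cl_\xi(\lim_\xi\F)$. This is the only place where irreducibility of $\F$ enters.

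For $\cl_\xi(\lim_\xi\F)\subset\lim_{\T\xi}\F$: since the identity map $(X,\xi)\to(X,\T\xi)$ is continuous, $\lim_\xi\F\subset\lim_{\T\xi}\F$; and $\lim_{\T\xi}\F$ is closed in the topological space $(X,\T\xi)$ because limits of filters are closed in topological spaces, hence it is a $\xi$-closed set containing $\lim_\xi\F$ and therefore contains $\cl_\xi(\lim_\xi\F)$. (Equivalently, and more in the spirit of the previous paragraph: if $x\in\cl_\xi(\lim_\xi\F)$ and $O$ is $\xi$-open with $x\in O$, then $O$ meets $\lim_\xi\F$, so $O\in\F$ by the defining property (\ref{eq:open}) of a $\xi$-open set.) No irreducibility is needed here.

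I do not anticipate a real obstacle: once the neighborhood descriptions are in hand the argument is a two-line calculation, and the only point that needs a moment's care is the identification of $\cl_\xi$ with the $\T\xi$-closure together with the formula $\N_{\T\xi}(x)=(\O_\xi(x))^{\uparrow}$. A more roundabout alternative would be to obtain $\lim_{\T\xi}\F\subset\cl_\xi(\lim_\xi\F)$ from Lemma \ref{lem:cirreduciblesets}, using that $\lim_\xi\F$ is c-irreducible and that its closure is the limit of a $\T\xi$-irreducible ultrafilter refining $\{O\in\O_\xi:O\cap\lim_\xi\F\neq\emptyset\}$; but the direct neighborhood argument above is shorter and avoids passing through an auxiliary ultrafilter.
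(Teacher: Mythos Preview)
Your proof is correct. It differs from the paper's in exactly the way you anticipate at the end: the paper does take the ``roundabout alternative'' via Lemma~\ref{lem:cirreduciblesets}. There, the inclusion $\cl_\xi(\lim_\xi\F)\subset\lim_{\T\xi}\F$ is dismissed as ``always true'' (your second paragraph), and the nontrivial inclusion is obtained by observing that $\lim_\xi\F$ is c-irreducible, so that $\cl_\xi(\lim_\xi\F)=\lim_{\T\xi}\U$ for any ultrafilter $\U$ on the base $\{O\in\O_\xi:O\cap\lim_\xi\F\neq\emptyset\}$; since $O\cap\lim_\xi\F\neq\emptyset$ forces $O\in\F$ by openness, $\F$ itself contains this base, and the inclusion follows by monotonicity of limits. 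Your direct neighborhood comparison bypasses both the lemma and the auxiliary ultrafilter, and in fact your one-line argument ``$O\in\F$ and $\lim_\xi\F\in\F$, hence $O\cap\lim_\xi\F\neq\emptyset$'' is precisely the contrapositive of the paper's key step, applied pointwise rather than packaged through an ultrafilter. The paper's route earns the corollary as a byproduct of the structural picture in Lemma~\ref{lem:cirreduciblesets}; yours is shorter and makes the role of irreducibility more transparent.
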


\begin{proof}
We only need to show that $\lim_{\T\xi}\F\subset\cl_{\xi}(\lim_{\xi}\F)$
for the reverse inclusion is always true. In view of Lemma \ref{lem:cirreduciblesets},
$\lim_{\xi}\F$ is c-irreducible and thus, for every ultrafilter $\U$
of the filter-base $\{O\in\O_{\xi}:O\cap\lim_{\xi}\F\neq\emptyset\}$,
$\cl_{\xi}(\lim_{\xi}\F)=\lim_{\T\xi}\U$. Note that the condition
$O\cap\lim_{\xi}\F\neq\emptyset$ implies that $O\in\F$, so that
$\F$ contains this filter-base. 
\end{proof}
Recall (e.g., \cite{sobrremainder}) that the \emph{sobrification
}of a $T_{0}$ topological space $X$ is the set $^{s}X$ of irreducible
closed subsets of $X$ endowed with the topology $\{^{s}O:O\in\O_{\xi}\}$
where $^{s}O=\{C\in\,^{s}X:C\cap O\neq\emptyset\}$(\footnote{to see that this defines indeed a topology, note that $^{S}(\bigcup_{O\in\B}O)=\bigcup_{O\in\B}\,^{S}O$
and that $^{S}(O\cap U)=^{S}O\cap\,^{S}U$ because an irreducible
closed sets intersects $O\cap U$ whenever it intersects open sets
$O$ and $U$.}). Because $X$ is $T_{0}$ the map $e:X\to\,^{s}X$ defined by $e(x)=\cl\{x\}$
is one-to-one. It is a dense embedding because $^{s}O\cap e(X)=e(O)$.

Note that in view of Lemma \ref{lem:cirreduciblesets} we can identify
elements of $\pt'L$ (equivalence classes of irreducible ultrafilters
with the same limit) and elements of $^{s}X$. In fact, since the
topology of $^{s}X$ is the topology induced on $^{s}X$ by the upper
topology of the space of closed subsets of $X$ ordered by inclusion
because
\[
^{s}O=\{C\in\,^{s}X:C\cap O\neq\emptyset\}=\,^{s}X\setminus\downarrow(X\setminus O),
\]
we conclude via Theorem \ref{thm:uppertopology} that $^{s}X$ is
homeomorphic to $\pt'L$ endowed with the quotient topology induced
by $q=\lim'_{L}:\pt L\to\pt'L$, but this is the the structure of
$\pt'L$, since $\pt'L$ is topological by Corollary \ref{cor:pt'Ltopsober}.
\begin{thm}
Let $(X,\xi)$ be a topological and let $L=(\mathbb{P}X,\lim_{\xi})$.
Then $\pt'L$ is the sobrification $^{s}X$ of $X$, and $e=q\circ h$
is a dense embedding in the diagram below whenever $(X,\xi)$ is $T_{0}$:
\[
\xymatrix{X\ar[r]^{h:x\mapsto\{x\}^{\uparrow}}\ar[d]_{e:x\mapsto\cl\{x\}} & \pt(\mathbb{P}X,\lim_{\xi})\ar[d]^{q}\\
^{s}X\ar@{=}[r] & \pt'(\mathbb{P}X,\lim_{\xi})
}
\]
\end{thm}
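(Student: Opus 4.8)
The plan is to assemble the theorem from ingredients that have already been established earlier in the excerpt, so the ``proof'' is really a matter of correctly identifying which prior result does which piece of work. First I would recall that by Corollary~\ref{cor:pt'Ltopsober}, for a topological space $(X,\xi)$ the space $\pt'L$ with $L=(\mathbb{P}X,\lim_{\xi})$ is a topological sober space, and by Corollary~\ref{cor:qopenclosed} (or Theorem~\ref{thm:uppertopology}) its topology is precisely the one carried by the upper topology on $\C_L$ restricted to $\pt'L$; since $L=(\mathbb{P}X,\lim_\xi)$ has enough closed elements and closed limits when $\xi$ is topological, $\pt'L$ sits inside $\C_L$ as $\{\lim_\xi x : x\in\pt L\}$, i.e.\ (via Lemma~\ref{lem:cirreduciblesets} and Remark~\ref{rem:notaas}) as the set of closures of limits of irreducible ultrafilters, which is exactly the set of irreducible closed subsets of $X$. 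This identification of the \emph{underlying set} of $\pt'L$ with $\,^sX$ is the first step.

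Next I would check the \emph{topologies} agree under this bijection. The displayed computation in the paragraph preceding the theorem shows $^sO = \,^sX\setminus{\downarrow}(X\setminus O)$, so the topology of $^sX$ is exactly the upper topology of the poset of closed sets, restricted to $^sX$; by Theorem~\ref{thm:uppertopology} this is the quotient topology on $\pt'L$ induced by $q=\lim'_L$ on $\T(\pt L)$, and by Corollary~\ref{cor:pt'Ltopsober} the convergence $\pt'L$ is already topological, hence equal to its topological modification. So $^sX$ and $\pt'L$ are homeomorphic, and since the identification is along $\lim_\xi$ applied to points, the square commutes on the nose.

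Then the embedding statement: $h:X\to\pt L$, $h(x)=\{x\}^{\uparrow}$, is a dense embedding by Proposition~\ref{prop:subspaceofptL}; $q$ is continuous, open and closed by Corollary~\ref{cor:qopenclosed}; and $e=q\circ h$ sends $x$ to the $\sim$-class of $\{x\}^{\uparrow}$, which under the identification is $\lim_\xi\{x\}^{\uparrow}$ whose closure in $X$ is $\cl\{x\}$ (for topological $\xi$, $\lim_\xi\{x\}^\uparrow = \N_\xi(x)^{\circ}$-limit is closed and equals $\cl\{x\}$, so no closure is even needed) --- matching the classical $e(x)=\cl\{x\}$. Injectivity of $e$ when $(X,\xi)$ is $T_0$ is immediate since distinct points have distinct closures in a $T_0$ topological space; that $e$ is a dense embedding then follows from $^sO\cap e(X)=e(O)$ together with density of $h(X)$ in $\pt L$ and openness of $q$ (alternatively, invoke the already-recorded fact that $e:X\to\,^sX$ is a dense embedding and that $e=q\circ h$).

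The main obstacle, and the only place where care is genuinely needed, is the bookkeeping around the identification $\pt'L\cong\,^sX$: one must be sure that the equivalence classes of irreducible ultrafilters really are in bijection with irreducible closed sets. This uses Lemma~\ref{lem:cirreduciblesets} (every irreducible closed set $C$ arises as $\lim_{\T\xi}\U$ for some irreducible ultrafilter $\U$, and limits of irreducible filters are c-irreducible) plus the fact that for a \emph{topological} $\xi$ the limit $\lim_\xi\U$ of an irreducible ultrafilter is already closed, so $\cl_\xi(\lim_\xi\U)=\lim_\xi\U$ and two irreducible ultrafilters are $\sim$-equivalent iff they determine the same closed set. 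Once this dictionary is pinned down, comparing the two topologies via Theorem~\ref{thm:uppertopology} and the displayed formula for $^sO$ is routine, and the diagram commutes by construction.
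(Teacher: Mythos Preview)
Your proposal is correct and follows essentially the same route as the paper: the paper's justification is entirely contained in the paragraph preceding the theorem, which identifies $\pt'L$ with $^{s}X$ as a set via Lemma~\ref{lem:cirreduciblesets}, matches the topologies via the formula $^{s}O=\,^{s}X\setminus\downarrow(X\setminus O)$ together with Theorem~\ref{thm:uppertopology}, and invokes Corollary~\ref{cor:pt'Ltopsober} to conclude that the quotient topology is the actual convergence on $\pt'L$. Your treatment of the dense-embedding claim (verifying $e=q\circ h$ lands on $\cl\{x\}$ and then either quoting the already-recorded fact about $e$ or redoing it via $^{s}O\cap e(X)=e(O)$) is slightly more explicit than the paper, which simply relies on the sentence about $e$ in the sobrification paragraph, but the argument is the same.
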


\section{The axiom $T_{D}$ in convergence spaces}

Recall that a topological space $X$ is $T_{D}$ if for every $x\in X$
there is $U\in\O(x)$ with $U\setminus\{x\}$ open (e.g., \cite{MR2868166}).
\begin{lem}
\label{lem:TD} In a topological space $X$, the following are equivalent:
\begin{enumerate}
\item $X$ is $T_{D}$;
\item for every $x\in X$ there is $U\in\O(x)$ with 
\[
\lim\{x\}^{\uparrow}\cap U=\{x\};
\]
\item for every $x\in X$ and every filter $\F$ on $X$ that converges
to $x$, there is $A\in\F$ with 
\[
\lim\{x\}^{\uparrow}\cap A\subset\{x\};
\]
\item for every $x\in X$ and every ultrafilter $\U$ on $X$ that converges
to $x$, there is $A\in\U$ with 
\[
\lim\{x\}^{\uparrow}\cap A\subset\{x\}.
\]
\end{enumerate}
\end{lem}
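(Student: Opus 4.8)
The plan is to prove the cycle of equivalences (1) $\Rightarrow$ (2) $\Rightarrow$ (3) $\Rightarrow$ (4) $\Rightarrow$ (1), since (2), (3), (4) are successively weaker-looking reformulations (a neighborhood witness, an arbitrary-filter witness, an ultrafilter witness) and (1) is the classical definition. The implication (1) $\Rightarrow$ (2) is the heart of the matter: given $x$, take $U\in\O(x)$ with $U\setminus\{x\}$ open; I claim $\lim\{x\}^{\uparrow}\cap U=\{x\}$. In a topological space $\lim\{x\}^{\uparrow}=\N(x)^{\circ\text{-set}}$; more usefully, $t\in\lim\{x\}^{\uparrow}$ means $\N(t)\supset\N(x)$, i.e. $x$ is in the closure of $\{t\}$ is \emph{not} it — rather $t\in\cl\{x\}$. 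So $\lim\{x\}^{\uparrow}=\cl\{x\}$ in a topological space. Then if some $t\neq x$ lies in $\cl\{x\}\cap U$, the open set $U\setminus\{x\}$ is a neighborhood of $t$ missing $x$, contradicting $t\in\cl\{x\}$. Hence $\lim\{x\}^{\uparrow}\cap U=\{x\}$, giving (2).

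The implications (2) $\Rightarrow$ (3) and (3) $\Rightarrow$ (4) are immediate: in a topological space, if $\F$ converges to $x$ then $U\in\N(x)\subset\F$, so the set $A=U$ works in (3); and (3) $\Rightarrow$ (4) is the special case of an ultrafilter, which is a filter. The only slightly delicate point is that in (2) we have equality $\lim\{x\}^{\uparrow}\cap U=\{x\}$ while (3) and (4) only ask for the inclusion $\lim\{x\}^{\uparrow}\cap A\subset\{x\}$, which is of course even easier to satisfy; and $x\in\lim\{x\}^{\uparrow}\cap A$ holds automatically since $x\in A$ and $x\in\lim\{x\}^{\uparrow}$ by the point axiom, so the inclusion is in fact an equality there too.

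For (4) $\Rightarrow$ (1) I would argue as follows. Fix $x$. The neighborhood filter $\N(x)$ converges to $x$, so let $\U$ be any ultrafilter finer than $\N(x)$; then $\U$ converges to $x$ as well, so by (4) there is $A\in\U$ with $\cl\{x\}\cap A\subset\{x\}$, i.e. $\cl\{x\}\cap A=\{x\}$ (again using $x\in A$). Since $A\in\U$ and $\U\supset\N(x)$ is an ultrafilter, this alone does not force $A$ to be a neighborhood of $x$, so I cannot conclude directly. Instead I would run the argument over \emph{all} ultrafilters refining $\N(x)$: for each such $\U$ pick $A_\U\in\U$ with $\cl\{x\}\cap A_\U=\{x\}$. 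The complements $(X\setminus A_\U)$ lie outside each respective $\U$; since every ultrafilter finer than $\N(x)$ avoids some $A_\U^c$ — wait, this needs care. The cleaner route: consider $O=\bigcup\{V\in\O_\xi : \cl\{x\}\cap V\subset\{x\}\}$. This is open and contains $x$ (take any $V=A_\U$'s interior? no — $A_\U$ need not be open). So the real obstacle is upgrading the abstract set $A$ of (4) to an \emph{open} set, which is exactly what $T_D$ demands.

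To overcome this I would use compactness-type juggling with ultrafilters: the family $\B=\{V\in\O(x): \cl\{x\}\cap V=\{x\}\}$; I claim $\B\neq\emptyset$, equivalently $\bigcap\{\,\N(x)\text{-neighborhoods avoiding }\cl\{x\}\setminus\{x\}\,\}$... concretely, suppose for contradiction that every $V\in\O(x)$ meets $\cl\{x\}\setminus\{x\}$; then $\{V\cap(\cl\{x\}\setminus\{x\}) : V\in\O(x)\}$ is a filter base, so there is an ultrafilter $\U$ containing it together with $\N(x)$; this $\U$ converges to $x$, yet for every $A\in\U$ we have $A\in\U\supset$ the filter base, so $A\cap(\cl\{x\}\setminus\{x\})\neq\emptyset$, i.e. $\cl\{x\}\cap A\not\subset\{x\}$, contradicting (4). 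Hence some $V\in\O(x)$ has $\cl\{x\}\cap V=\{x\}$, and then $V\setminus\{x\}=V\setminus\cl\{x\}=V\cap(X\setminus\cl\{x\})$ is open, so $X$ is $T_D$. I expect this last implication — fabricating the open neighborhood out of the mere set-witness via the ultrafilter argument — to be the main obstacle; everything else reduces to the identity $\lim_\xi\{x\}^{\uparrow}=\cl_\xi\{x\}$ valid in topological spaces and to trivial filter-refinement observations.
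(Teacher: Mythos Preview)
Your argument is correct. The chain $(1)\Rightarrow(2)\Rightarrow(3)\Rightarrow(4)$ matches the paper's proof essentially verbatim, hinging on the identity $\lim_\xi\{x\}^{\uparrow}=\cl_\xi\{x\}$ in a topological space and on $\N(x)\subset\F$ when $x\in\lim\F$.

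For $(4)\Rightarrow(1)$ you take a different route. The paper invokes the ultrafilter--compactness principle: for each $\U\in\mathbb{U}(\N(x))$ choose $A_\U\in\U$ with $\lim\{x\}^{\uparrow}\cap A_\U\subset\{x\}$, then finitely many $A_\U$ already have union in $\N(x)$; an open $U$ inside that union then satisfies $\lim\{x\}^{\uparrow}\cap U\subset\{x\}$, and one checks $U\setminus\{x\}$ is open. You instead argue by contradiction: if every $V\in\O(x)$ meets $\cl\{x\}\setminus\{x\}$, then $\{V\cap(\cl\{x\}\setminus\{x\}):V\in\O(x)\}$ is a filter base, any ultrafilter $\U$ over it contains both $\N(x)$ and $\cl\{x\}\setminus\{x\}$, and hence violates $(4)$. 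This is a clean single-ultrafilter contradiction that bypasses the finite-cover step; it effectively proves $(4)\Rightarrow(2)$ directly, after which $(2)\Rightarrow(1)$ via $V\setminus\{x\}=V\cap(X\setminus\cl\{x\})$ is immediate. The paper's approach is a bit more constructive (it exhibits the open $U$ as a concrete finite union), while yours is shorter and avoids the auxiliary compactness lemma. Both are perfectly valid; just trim the exploratory false starts in your write-up of $(4)\Rightarrow(1)$ before the final contradiction argument.
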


\begin{proof}
$(1)\then(2)$: If $X$ is $T_{D}$ and $x\in X$, pick $U\in\O(x)$
with $U\setminus\{x\}$ open. Then $\lim\{x\}\cap U=\{x\}$ because
$t\in\lim\{x\}^{\uparrow}\cap(U\setminus\{x\})$ would imply $t\in\{x\}^{\uparrow}$
which is impossible.

$(2)\then(3)$: If $x\in\lim\F$, then $\F\geq\O(x)$ and we can take
the $U$ from (1) as $A$. $(3)\then(4)$ is obvious.

$(4)\then(1):$ For every $\U\in\mathbb{U}(\N(x))$, there is $A_{\U}\in\U$
with $\lim\{x\}\cap A_{\U}\subset A_{\U}$. Hence there is a finite
subset $F$ of $\mathbb{U}(\N(x))$ with $\bigcup_{\U\in F}A_{\U}\in\N(x)$.
Let $U\in\O(x)$ with $U\subset\bigcup_{\U\in F}A_{\U}$. Then $U\setminus\{x\}$
is open. Indeed, if $t\in\lim\U\cap(U\setminus\{x\})$ for some ultrafilter
$\U$ then $U\in\U$ because $U$ is open. If $U\setminus\{x\}\notin\U$
then $U^{c}\cup\{x\}\in\U$, hence $\U=\{x\}^{\uparrow}$ and (4)
ensures a contradiction.
\end{proof}
Note that $(3)$ and $(4)$ make sense for general convergences and
are equivalent (\footnote{\begin{proof}
If $\xi$ satisfies $(4)$ and $x\in\lim\F$ then $x\in\lim\U$ for
every $\U\in\mathbb{U}(\F)$ so that there is $A_{\U}\in\U$ with
$\lim\{x\}^{\uparrow}\cap A_{\U}\subset\{x\}$. Hence there is a finite
subset $F$ of $\mathbb{U}(\F)$ with $A=\bigcup_{\U\in F}A_{\U}\in\F$.
Then 
\[
\lim\{x\}\cap A=\bigcup_{\U\in F}\lim\{x\}\cap A_{\U}\subset\{x\}.
\]
\end{proof}
}). Thus we call a \emph{convergence space} $T_{D}$ if it satisfies
$(3)$, equivalently $(4)$. As far as I know, this is the first time
this axiom is introduced for convergence spaces.

Just as for topological space, $T_{D}$ lies between $T_{0}$ and
$T_{1}$: $T_{1}$ obviously implies $T_{D}$ as in this case $\lim\{x\}^{\uparrow}=\{x\}$
for every $x$. On the other hand, if $X$ is $T_{D}$ and $\left\{ \F\in\mathbb{F}X:x\in\lim\F\right\} =\left\{ \F\in\mathbb{F}X:t\in\lim\F\right\} $
then $\{x,t\}\subset\lim\{x\}^{\uparrow}\cap\lim\{t\}^{\uparrow}$
so by $T_{D}$ there is $A\in\{x\}^{\uparrow}$ with $\lim\{t\}^{\uparrow}\cap A\subset\{t\}$
which ensures $x=t$ because $x\in\lim\{t\}^{\uparrow}\cap A$.

Note that in all the examples of Section \ref{sec:sober}, we have
$x\neq y$ with $\{x,y\}\in\lim\{x\}^{\uparrow}\cap\lim\{y\}^{\uparrow}$
so that $\{y\}^{\uparrow}$ is an ultrafilter converging to $x$ and
$y\in\lim\{x\}^{\uparrow}$. Hence $y\in\lim\{x\}^{\uparrow}\cap A$
for every $A\in\{y\}^{\uparrow}$ and the space is \emph{not} $T_{D}$.

Preservation properties for $T_{D}$ easily extend from topologies
to convergences with this definition: 
\begin{prop}
\label{prop:TDpreserve} If $\xi\leq\tau$ and $\xi$ is $T_{D}$,
so is $\tau$; a subspace of a $T_{D}$ convergence space is $T_{D}$;
a finite product of $T_{D}$ convergence spaces is $T_{D}$.
\end{prop}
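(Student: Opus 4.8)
The plan is to prove the three preservation statements in order, each time reducing to the defining condition $(3)$ (equivalently $(4)$) of $T_D$ for convergence spaces, which refers only to limits of filters and the sets $\lim\{x\}^{\uparrow}$. The cleanest route throughout is to keep track of how $\lim\{x\}^{\uparrow}$ behaves under the operations and to exploit that a coarser convergence has \emph{larger} limit sets.

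\emph{Coarsening.} Suppose $\xi\le\tau$ on the same set $X$ and $\xi$ is $T_D$. Fix $x\in X$ and a filter $\F$ with $x\in\lim_\tau\F$. Since $\xi\le\tau$ (so $\id\in C(\tau,\xi)$), also $x\in\lim_\xi\F$, and moreover $\lim_\tau\{x\}^{\uparrow}\subset\lim_\xi\{x\}^{\uparrow}$. Apply $T_D$ for $\xi$ to $\F$: there is $A\in\F$ with $\lim_\xi\{x\}^{\uparrow}\cap A\subset\{x\}$. Then a fortiori $\lim_\tau\{x\}^{\uparrow}\cap A\subset\{x\}$, so $\tau$ is $T_D$. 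This is immediate.

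\emph{Subspaces.} Let $(X,\xi)$ be $T_D$, let $A\subset X$ carry $\xi_{|A}=i_A^{-}\xi$, and fix $a\in A$ and a filter $\G$ on $A$ with $a\in\lim_{\xi_{|A}}\G$. By definition of the initial (subspace) convergence, $a\in\lim_\xi i_A[\G]$, where $i_A[\G]=\{B\subset X: B\cap A\in\G\}^{\uparrow}$ is the image filter. Note $\lim_{\xi_{|A}}\{a\}^{\uparrow}=A\cap\lim_\xi\{a\}^{\uparrow}$, again by the definition of initial convergence. Apply $T_D$ for $\xi$ to $i_A[\G]$: there is $B\in i_A[\G]$ with $\lim_\xi\{a\}^{\uparrow}\cap B\subset\{a\}$; then $B\cap A\in\G$ and $\lim_{\xi_{|A}}\{a\}^{\uparrow}\cap(B\cap A)=A\cap\lim_\xi\{a\}^{\uparrow}\cap B\subset\{a\}$, so $(A,\xi_{|A})$ is $T_D$.

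\emph{Finite products.} By induction it suffices to handle a product $X\times Y$ of two $T_D$ convergence spaces. Fix $(x,y)\in X\times Y$ and a filter $\F$ on $X\times Y$ with $(x,y)\in\lim_{\xi\times\tau}\F$. By definition of the product (initial) convergence, $x\in\lim_\xi p_X[\F]$ and $y\in\lim_\tau p_Y[\F]$, where $p_X,p_Y$ are the projections. Apply $T_D$ for $\xi$ and $\tau$ respectively: there are $A\in p_X[\F]$ with $\lim_\xi\{x\}^{\uparrow}\cap A\subset\{x\}$ and $B\in p_Y[\F]$ with $\lim_\tau\{y\}^{\uparrow}\cap B\subset\{y\}$. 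Then $A\times B\in\F$ (as $p_X^{-}(A),p_Y^{-}(B)\in\F$), and the key identity is
\[
\lim_{\xi\times\tau}\{(x,y)\}^{\uparrow}=\lim_\xi\{x\}^{\uparrow}\times\lim_\tau\{y\}^{\uparrow},
\]
which follows because the product convergence is the initial convergence for the two projections and $p_X[\{(x,y)\}^{\uparrow}]=\{x\}^{\uparrow}$, $p_Y[\{(x,y)\}^{\uparrow}]=\{y\}^{\uparrow}$. Intersecting with $A\times B$ gives $\lim_{\xi\times\tau}\{(x,y)\}^{\uparrow}\cap(A\times B)\subset(\lim_\xi\{x\}^{\uparrow}\cap A)\times(\lim_\tau\{y\}^{\uparrow}\cap B)\subset\{x\}\times\{y\}=\{(x,y)\}$, so $X\times Y$ is $T_D$. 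I expect the product case to be the main point requiring care, specifically the verification that $\lim_{\xi\times\tau}\{(x,y)\}^{\uparrow}$ is exactly the product of the one-variable limit sets; this is where one must unwind the definition of the initial lift as $p_X^{-}\xi\vee p_Y^{-}\tau$ and use formula \eqref{eq:latticeconv-1}, noting that for a principal ultrafilter the relevant inf of image filters is easy to compute. The coarsening and subspace cases are routine given the definitions.
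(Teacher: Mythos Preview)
Your proof is correct and follows essentially the same approach as the paper's: in each case you push the filter forward along the relevant continuous map (identity, inclusion, projections), apply the $T_D$ hypothesis there, and pull the witnessing set back, using that $\lim\{x\}^{\uparrow}$ shrinks under refinement and restricts/factors appropriately under subspace and product. The only cosmetic difference is that the paper treats an $n$-fold product directly rather than reducing to two factors by induction, and is slightly more terse about the identity $\lim_{\xi\times\tau}\{(x,y)\}^{\uparrow}=\lim_\xi\{x\}^{\uparrow}\times\lim_\tau\{y\}^{\uparrow}$ that you spell out.
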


\begin{proof}
If $x\in\lim_{\tau}\F\subset\lim_{\xi}\F$ there is $A\in\F$ with
$\lim_{\xi}\{x\}^{\uparrow}\cap A\subset\{x\}$ and the conclusion
follows from $\lim_{\tau}\{x\}^{\uparrow}\subset\lim_{\xi}\{x\}^{\uparrow}$.
If $A\subset X$ with inclusion map $i:A\to X$ and $x\in A\cap\lim_{\xi_{A}}\F$,
$x\in\lim_{\xi}i[\F]$ so there is $F\in i[\F]$ with $\lim_{\xi}\{x\}^{\uparrow}\cap F\subset\{x\}$
so that $\lim_{\xi_{A}}\{x\}^{\uparrow}\cap(A\cap F)\subset\{x\}$
and $(A,\xi_{A})$ is $T_{D}$. Suppose $(X_{1},\xi_{1}),\ldots(X_{n},\xi_{n})$
are $T_{D}$ convergence spaces and let $(x_{1},\ldots x_{n})\in\lim_{\Pi_{i=1}^{n}\xi_{i}}\F$,
that is $x_{i}\in\lim_{\xi_{i}}p_{i}[\F]$ for every $i$, where $p_{i}:\Pi_{i=1}^{n}X_{i}\to X_{i}$
is the projection. There is $A_{i}\in p_{i}[\F]$ with $\lim_{\xi_{i}}\{x_{i}\}^{\uparrow}\cap A_{i}\subset\{x_{i}\}$
so that $\Pi_{i=1}^{n}A_{i}\in\Pi_{i=1}^{n}p_{i}[\F]\subset\F$ and
then $\lim_{\Pi_{i=1}^{n}\xi_{i}}\{(x_{1},\ldots x_{n})\}^{\uparrow}\cap\Pi_{i=1}^{n}A_{i}\subset\{(x_{1},\ldots x_{n})\}$. 
\end{proof}
On the other hand, a countable product of topological $T_{D}$ spaces
(e.g., countably many copies of the Sierpi\'nski space) may fail
to be $T_{D}$ (See e.g., \cite{brummer69}).

However:
\begin{lem}
\label{lem:finiteTD} 
\begin{enumerate}
\item A $T_{D}$ convergence space is antisymmetric.
\item Moreover, if the space is finite the converse is true, that is, if
$\to$ is antisymmetric then the convergence is $T_{D}$.
\item If $\lim\{x\}^{\uparrow}$ is closed for every $x$ then $\to$ is
transitive. In particular if the convergence is $S_{0}$ then $\to$
is transitive, and conversely if the space is finite and of finite
depth.
\end{enumerate}
\end{lem}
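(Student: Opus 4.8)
The plan is to treat the three items in turn, each reducing to a short manipulation of limits of principal ultrafilters. For item~(1), recall that antisymmetry is condition (\ref{eq:antisymmetric}). Suppose $X$ is $T_{D}$ and $\{x,y\}\subseteq\lim\{x\}^{\uparrow}\cap\lim\{y\}^{\uparrow}$; in particular $x\in\lim\{y\}^{\uparrow}$, so the principal ultrafilter $\{y\}^{\uparrow}$ converges to $x$. Since being $T_{D}$ is exactly condition $(4)$ of Lemma~\ref{lem:TD} (which, as noted just after that lemma, is the defining property of a $T_{D}$ convergence space), there is $A\in\{y\}^{\uparrow}$ with $\lim\{x\}^{\uparrow}\cap A\subseteq\{x\}$. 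As $y\in A$ and $y\in\lim\{x\}^{\uparrow}$ by hypothesis, $y\in\{x\}$, i.e. $x=y$; this is essentially the computation already sketched just before the lemma for $T_{D}\Rightarrow T_{0}$.

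For item~(2) I would verify condition $(4)$ directly, using that on a finite set every ultrafilter is principal. Let $x\in X$ and let $\U=\{u\}^{\uparrow}$ be an ultrafilter converging to $x$, that is $u\to x$. Take $A=\{u\}\in\U$: if $v\in\lim\{x\}^{\uparrow}\cap\{u\}$ then $v=u$ and $u\in\lim\{x\}^{\uparrow}$, i.e. $x\to u$; applying antisymmetry of $\to$ to $u\to x$ and $x\to u$ gives $u=x$. Hence $\lim\{x\}^{\uparrow}\cap\{u\}\subseteq\{x\}$, so $(4)$ holds and $X$ is $T_{D}$. The thing to be careful about is to use formulation $(4)$ (over ultrafilters) rather than $(3)$, so that the principality forced by finiteness can be exploited immediately.

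For item~(3), assume first that $\lim\{x\}^{\uparrow}$ is closed for every $x$ and that $x\to y$, $y\to z$. Then $y\in\lim\{x\}^{\uparrow}$ means $\lim\{x\}^{\uparrow}\in\{y\}^{\uparrow}$, and closedness of $\lim\{x\}^{\uparrow}$ forces $\lim\{y\}^{\uparrow}\subseteq\lim\{x\}^{\uparrow}$; since $z\in\lim\{y\}^{\uparrow}$ we get $x\to z$, so $\to$ is transitive. For the ``in particular'', rather than passing through closedness I would argue directly: when $X$ is $S_{0}$, the instance of (\ref{eq:S0}) with the filter $\{x\}^{\uparrow}$ and its limit point $y$ says exactly that $y\in\lim\{x\}^{\uparrow}$ implies $\lim\{y\}^{\uparrow}\subseteq\lim\{x\}^{\uparrow}$, which is transitivity of $\to$. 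For the converse, let $X$ be finite, of finite depth, with $\to$ transitive, and verify (\ref{eq:S0}): given $a\in\lim\F$ and $t\in\lim\{a\}^{\uparrow}$, write $\F=F^{\uparrow}$ with $F=\bigcap\F$ (if $F=\emptyset$ then $\F$ is improper and $\lim\F=X\ni t$ trivially). Since $F^{\uparrow}=\bigcap_{v\in F}\{v\}^{\uparrow}$ is a finite intersection, the finite-depth axiom (\ref{eq:finitedepth}) gives $\lim\F=\bigcap_{v\in F}\lim\{v\}^{\uparrow}$, so $v\to a$ for every $v\in F$; transitivity then yields $v\to t$ for every $v\in F$, i.e. $t\in\lim\F$, and $X$ is $S_{0}$.

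I do not anticipate a genuine obstacle: each part is a short unwinding of definitions on principal ultrafilters. The two care points worth isolating are the choice of the equivalent condition $(4)$ of $T_{D}$ in item~(2), and, in the converse half of item~(3), the reduction of an arbitrary filter on a finite space to the finite intersection $\bigcap_{v\in F}\{v\}^{\uparrow}$ of principal ultrafilters, which is what makes the finite-depth axiom applicable and lets transitivity of $\to$ finish the argument.
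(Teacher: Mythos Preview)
Your proof is correct and follows essentially the same approach as the paper. The only cosmetic difference is that the paper argues items (1) and (2) by contrapositive while you argue them directly, but the underlying computation---picking $A=\{y\}$ (respectively $A=\{u\}$) in the principal ultrafilter and invoking antisymmetry---is identical; your treatment of item (3), including the finite-depth decomposition $\lim\{F\}^{\uparrow}=\bigcap_{v\in F}\lim\{v\}^{\uparrow}$ for the converse, matches the paper's proof.
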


Hence when the convergence is $T_{D}$ and limits of principal ultrafilters
are closed, we have a \emph{specialization order} defined by $x\geq y$
if $y\in\lim\{x\}^{\uparrow}$.
\begin{proof}
1. Assume there are points $x\neq y$ with $\{x,y\}\subset\lim\{x\}^{\uparrow}\cap\lim\{y\}^{\uparrow}$.
Then $y\in\lim\{x\}^{\uparrow}\cap A$ for every $A\in\{y\}^{\uparrow}$
and the convergence is not $T_{D}$. 

2. Conversely for a finite space, if the convergence is not $T_{D}$
there is a (principal because the space is finite) ultrafilter $\{y\}^{\uparrow}$
converging to $x$ such that $\lim\{x\}^{\uparrow}\cap A\cap(X\setminus\{x\})\neq\emptyset$
for every $A\in\{y\}^{\uparrow}$, in particular for $A=\{y\}$. Hence
$y\neq x$ and $y\in\lim\{x\}^{\uparrow}$ and thus $\{x,y\}\subset\lim\{x\}^{\uparrow}\cap\lim\{y\}^{\uparrow}$.

3. That $\to$ is transitive if $\lim\{x\}^{\uparrow}$ is closed
for every $x$ is clear. For the converse for a finite space, assume
$\F=\{F\}^{\uparrow}$ converges to $x$ and $\{x\}^{\uparrow}$ converges
to $y$. For every $t\in F$, $t\to x$ and $x\to y$, so $t\to y$
by transitivity. By finite depth, $y\in\bigcap_{t\in F}\lim\{t\}^{\uparrow}=\lim\bigcap_{t\in F}\{t\}^{\uparrow}=\lim\{F\}^{\uparrow}$.
Hence, the space is $S_{0}$ and thus limits of principal ultrafilters
are closed.
\end{proof}
Note that in view of Lemma \ref{lem:finiteTD}, a \emph{finite}, finitely
deep, weakly sober and non-sober convergence like Example \ref{exa:weaklysobernotsober}
cannot be $T_{D}$. This is in fact general:
\begin{prop}
\label{prop:TDweaklysober} In a $T_{D}$ weakly sober space every
irreducible ultrafilter is principal. If moreover the convergence
is of finite depth, then the only irreducible filters are principal
ultrafilters and thus the convergence is also sober.
\end{prop}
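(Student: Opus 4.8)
The plan is to treat the two assertions in turn. In both, weak sobriety supplies a candidate generic point and the $T_{D}$ axiom is used to squeeze the relevant ultrafilter down to the principal ultrafilter at that point; finite depth enters only in the second assertion, where it serves to merge several principal ultrafilters refining a common irreducible filter.

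For the first assertion, I would start from an irreducible ultrafilter $\U$, so $\lim_{\xi}\U\in\U$. Weak sobriety produces a (unique) $x\in X$ with $\lim_{\xi}\U=\lim_{\xi}\{x\}^{\uparrow}$, and by the point axiom $x\in\lim_{\xi}\{x\}^{\uparrow}$, so $\U$ converges to $x$. Now invoke $T_{D}$ in its ultrafilter form (condition $(4)$ of Lemma \ref{lem:TD}, which is the defining property of $T_{D}$ for convergence spaces) applied to $\U$ at $x$: there is $A\in\U$ with $\lim_{\xi}\{x\}^{\uparrow}\cap A\subset\{x\}$. Since $\U$ is irreducible, $\lim_{\xi}\{x\}^{\uparrow}=\lim_{\xi}\U\in\U$, hence $\lim_{\xi}\{x\}^{\uparrow}\cap A\in\U$, and therefore $\{x\}\in\U$; that is, $\U=\{x\}^{\uparrow}$ is principal.

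For the second assertion, assume moreover that $\xi$ is of finite depth and take an irreducible proper filter $\F$. Every ultrafilter finer than $\F$ is again irreducible, hence principal by the first assertion, say $\{x_{\U}\}^{\uparrow}$; and $\lim_{\xi}\F\in\F\subset\U$ gives $x_{\U}\in\lim_{\xi}\F$. The key step is to show all such points coincide: if $\U_{1}=\{x_{1}\}^{\uparrow}$ and $\U_{2}=\{x_{2}\}^{\uparrow}$ are finer than $\F$, then $\F\subset\U_{1}\cap\U_{2}=\{x_{1},x_{2}\}^{\uparrow}$, so by monotonicity and finite depth
\[
\lim_{\xi}\F\subset\lim_{\xi}\{x_{1},x_{2}\}^{\uparrow}=\lim_{\xi}\{x_{1}\}^{\uparrow}\cap\lim_{\xi}\{x_{2}\}^{\uparrow},
\]
and combining with $x_{1},x_{2}\in\lim_{\xi}\F$ we get $\{x_{1},x_{2}\}\subset\lim_{\xi}\{x_{1}\}^{\uparrow}\cap\lim_{\xi}\{x_{2}\}^{\uparrow}$, whence $x_{1}=x_{2}$ since a $T_{D}$ convergence is antisymmetric (Lemma \ref{lem:finiteTD}(1)). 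Thus $\F$ is refined by exactly one ultrafilter and, being the intersection of the ultrafilters refining it, $\F$ equals that ultrafilter — a principal one, $\{x\}^{\uparrow}$. Finally $x$ is a generic point of $\F$, and it is unique because a weakly sober space satisfies \eqref{eq:equalgeneric}; hence the convergence is sober.

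The first assertion is short and clean; in the second, the routine inputs are that an ultrafilter refining an irreducible filter is irreducible and that a proper filter is the intersection of the ultrafilters refining it (so a unique such ultrafilter equals the filter). The one load-bearing step — and the only place finite depth is used — is the identity $\lim_{\xi}\{x_{1},x_{2}\}^{\uparrow}=\lim_{\xi}\{x_{1}\}^{\uparrow}\cap\lim_{\xi}\{x_{2}\}^{\uparrow}$; without the hypotheses, distinct principal ultrafilters refining $\F$ need not collapse, as one already sees on finite examples such as Example \ref{exa:weaklysobernotsober}, which is weakly sober and of finite depth yet neither $T_{D}$ nor sober.
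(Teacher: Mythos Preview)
Your proof is correct and follows essentially the same route as the paper: weak sobriety plus $T_{D}$ force an irreducible ultrafilter to be principal at its generic point, and in the second part finite depth together with antisymmetry (Lemma~\ref{lem:finiteTD}(1)) collapse the principal ultrafilters refining $\F$ to a single one. Your pairwise argument is a minor organizational variant of the paper's, which instead first observes that $\F$ must be the principal filter of a finite set $F$ and then shows $F$ is a singleton; both reach the conclusion via the same ingredients.
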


\begin{proof}
If $\xi$ is weakly sober and $\U$ is an irreducible filter, then
there is a (unique) $x$ with $\lim\U=\lim\{x\}^{\uparrow}\in\U$.
As $\xi$ is $T_{D}$ there is $U\in\U$ with $\lim\{x\}^{\uparrow}\cap U\subset\{x\}$.
As a result, $\{x\}\in\U$ and $\U$ is principal. Hence if $\F$
is irreducible, every ultrafilter finer than $\F$ is also irreducible,
hence principal. As a result, $\F$ is the principal filter of a finite
set $F$. If the convergence is moreover of finite depth, 
\[
\lim\F=\lim\{F\}^{\uparrow}=\bigcap_{x\in F}\lim\{x\}^{\uparrow}\in\F=\{F\}^{\uparrow}.
\]
By Lemma \ref{lem:finiteTD} (1), $F$ is a singleton. The space is
then sober.
\end{proof}
In view of Proposition \ref{prop:TDpreserve}, if $\T\xi$ is $T_{D}$
so is the finer convergence $\xi$, but the converse is false:
\begin{example}[A finitely deep $T_{D}$ convergence whose topological modification
is not $T_{D}$]
 Consider on $X=\{x,y,z\}$ the finitely deep convergence determined
by $\lim\{x\}^{\uparrow}=\{x,y\}$, $\lim\{y\}^{\uparrow}=\{y,z\}$
and $\lim\{z\}^{\uparrow}=\{z,x\}$.

\[
\xymatrix{x\ar[d]\\
y\ar[r] & z\ar[ul]
}
\]

This convergence is $T_{D}$ By Lemma \ref{lem:finiteTD}, but its
topological modification is antidiscrete, hence not even $T_{0}$.
\end{example}

\section{Problem \ref{prob:main}}
\begin{thm}
\label{thm:main} Let $(X,\xi)$ be a convergence space.
\begin{enumerate}
\item If $\xi$ is weakly sober and $T_{D}$ then every irreducible filter
is principal.
\item If $\xi$ is $S_{0}$ and $T_{0}$ the converse is true, that is,
$\xi$ is weakly sober and $T_{D}$ whenever every irreducible ultrafilter
is principal. 
\end{enumerate}
\end{thm}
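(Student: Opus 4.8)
The plan is to prove the two implications separately, leveraging the structural results of Sections~\ref{sec:sober}--\ref{sec:Sobrification} and the reformulation of $T_D$ from Lemma~\ref{lem:TD}. For part (1), I would argue almost exactly as in Proposition~\ref{prop:TDweaklysober}: given an irreducible filter $\F$, weak sobriety first handles ultrafilters $\U \geq \F$ (each is irreducible, hence $\lim_\xi\U = \lim_\xi\{x_\U\}^\uparrow$ for a unique $x_\U$), then the $T_D$ condition (3)/(4) gives $A\in\U$ with $\lim_\xi\{x_\U\}^\uparrow \cap A \subset \{x_\U\}$, forcing $\{x_\U\}\in\U$, i.e.\ $\U$ principal; so every irreducible filter refines only principal ultrafilters and must itself be the principal filter $\{F\}^\uparrow$ of a \emph{finite} set $F$. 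The one subtlety beyond Proposition~\ref{prop:TDweaklysober} is that here we do not assume finite depth, so I cannot immediately conclude $F$ is a singleton via $\lim\F = \bigcap_{x\in F}\lim\{x\}^\uparrow$. Instead I would observe directly: since $F = \bigcup_{x\in F}\{x\}$ is finite, irreducibility of $\{F\}^\uparrow$ with $F = \{x_1\}\cup\cdots\cup\{x_n\}$ and each $\{x_i\}$ closed? --- no, singletons need not be closed. The cleaner route: $\lim_\xi\{F\}^\uparrow \in \{F\}^\uparrow$ means $F \subset \lim_\xi\{F\}^\uparrow$; pick any $x\in F$, then $\{x\}^\uparrow \geq \{F\}^\uparrow$ so $x_{\{x\}^\uparrow}$ is a generic point of $\{x\}^\uparrow$ and, since $\{x\}^\uparrow \ni \{x\}$ is principal, weak sobriety plus $T_D$ already gave us that; but we want $F$ itself a singleton. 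Apply $T_D$ condition (3) to the irreducible filter $\F = \{F\}^\uparrow$ converging to its generic point $x$ (where $\lim\F = \lim\{x\}^\uparrow$): there is $A \in \{F\}^\uparrow$, i.e.\ $F \subset A$, with $\lim\{x\}^\uparrow \cap A \subset \{x\}$; but $F \subset \lim\F \cap A = \lim\{x\}^\uparrow\cap A \subset \{x\}$, so $F = \{x\}$. That finishes (1).

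For part (2), assume $\xi$ is $S_0$ and $T_0$ and that every irreducible ultrafilter is principal. To get weak sobriety, let $\U$ be an irreducible ultrafilter; by hypothesis $\U = \{x\}^\uparrow$ for some $x$, so $\lim_\xi\U = \lim_\xi\{x\}^\uparrow$, giving a generic point; uniqueness follows because $\xi$ is $T_0$ and $S_0$, hence aas by Lemma~\ref{lem:equallimpoints}(2) (equation~\eqref{eq:antisymmetric} together with the remark that weakly quasi-sober $+$ aas $=$ weakly sober, Proposition~\ref{prop:wsoberandqwsober}(1)). Wait --- Lemma~\ref{lem:equallimpoints}(2) gives antisymmetry, and antisymmetry implies aas trivially since $\lim\{x\}^\uparrow = \lim\{y\}^\uparrow$ implies $\{x,y\}\subset\lim\{x\}^\uparrow\cap\lim\{y\}^\uparrow$; so $\xi$ is aas, and being weakly quasi-sober (just shown) and aas it is weakly sober by Proposition~\ref{prop:wsoberandqwsober}(1). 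To get $T_D$: let $x \in \lim_\xi \U$ for an ultrafilter $\U$; I must produce $A \in \U$ with $\lim\{x\}^\uparrow \cap A \subset \{x\}$. If $\U = \{x\}^\uparrow$ this is vacuous modulo checking $\lim\{x\}^\uparrow\cap\{x\}=\{x\}$. If $\U \neq \{x\}^\uparrow$, I claim $X \setminus \{x\} \in \U$ works: suppose $t \in \lim\{x\}^\uparrow \cap (X\setminus\{x\})$, so $t \neq x$ and $t \in \lim\{x\}^\uparrow$. The danger is $t \in \lim\{x\}^\uparrow\cap A$ for \emph{every} $A\in\U$; this would force $\{t\}^\uparrow \leq$ ... hmm, not directly. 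Here is where I expect the real work: I want to show that $t \in \lim\{x\}^\uparrow$ with $t\neq x$ contradicts something. Using $S_0$: $x \in \lim\U$ and $t \in \lim\{x\}^\uparrow$ give $t \in \lim\U$. Now both $x, t \in \lim\U$; consider an irreducible ultrafilter --- $\U$ itself need not be irreducible. But $\{t\}^\uparrow \geq$ nothing forced. Actually, reconsider: take $\W = \{t\}^\uparrow$; is $t\in\lim\{x\}^\uparrow$ enough? We'd want $x \in \lim\{t\}^\uparrow$ too to get $\{x,t\}$ symmetric and contradict $T_0 + S_0$ aas, but we only have one direction.

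I now think the correct reduction of (2) is: first establish, just from "$S_0$ $+$ $T_0$ $+$ every irreducible ultrafilter principal," that \emph{for every $x$ and every $t\neq x$, $t \notin \lim_\xi\{x\}^\uparrow$} --- i.e.\ $\lim\{x\}^\uparrow = \{x\}$ for all $x$, making $\xi$ in fact $T_1$ (which trivially implies $T_D$). To see this: if $t \in \lim\{x\}^\uparrow$ with $t \neq x$, then by $S_0$, $x\in\lim\F$ implies $t\in\lim\F$ for every $\F$; in particular $\lim^{-1}(x) \subseteq \lim^{-1}(t)$ as sets of filters. For a contradiction with $T_0$ I need the reverse inclusion, which does not follow --- so $\xi$ need \emph{not} be $T_1$; the premise must really be used via \emph{irreducibility}. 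The honest hard step: show the principal ultrafilter $\{t\}^\uparrow$ is irreducible whenever $t \in \lim\{x\}^\uparrow$ for some $x$ --- but every principal ultrafilter $\{t\}^\uparrow$ is irreducible since $\lim\{t\}^\uparrow \ni t$! So by hypothesis that's automatic and vacuous. Thus I must instead build a \emph{non-principal} irreducible ultrafilter from a bad configuration and contradict the hypothesis directly: if $t\in\lim\{x\}^\uparrow$ with $t\neq x$, I would try to show $\{x\}^\uparrow$ (principal, irreducible) witnesses, via $S_0$, that the convergence class of $x$ is "too big," and then the real claim to extract is that this is incompatible with $T_0$ after all --- or produce the free irreducible ultrafilter. \textbf{The main obstacle} is precisely this direction of (2): converting "no non-principal irreducible ultrafilters" plus $S_0+T_0$ into the pointwise $T_D$ separation at each $x$; I expect the argument to run by assuming $\lim\{x\}^\uparrow \supsetneq \{x\}$, using $S_0$ to inflate a witnessing filter, passing to a free ultrafilter refining a suitable cofinite-on-$\lim\{x\}^\uparrow$ trace, and verifying irreducibility of that ultrafilter to contradict the hypothesis --- mirroring the topological argument in Remark~\ref{rem:notaas} but without a topology, where $S_0$ must substitute for the neighborhood-filter machinery. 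Weak sobriety of part (2), by contrast, I expect to be short, exactly via Proposition~\ref{prop:wsoberandqwsober}(1) and Lemma~\ref{lem:equallimpoints}(2) as sketched.
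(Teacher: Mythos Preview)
Your handling of (1) and of weak sobriety in (2) is essentially fine; the gap is in the $T_D$ half of (2), where you miss the paper's key step and head in a direction that cannot work.

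For (1), note first that the theorem only asks that every irreducible filter be \emph{principal}, not a principal ultrafilter; the paper simply cites Proposition~\ref{prop:TDweaklysober}, whose first paragraph already shows (without finite depth) that any irreducible $\F$ is the principal filter of a finite set. Your extra effort to force $|F|=1$ is not required. Incidentally, your argument for this extra conclusion is almost right but the phrase ``its generic point $x$ (where $\lim\F=\lim\{x\}^\uparrow$)'' is unjustified: weak sobriety gives generic points only for irreducible \emph{ultrafilters}. You can repair it trivially: pick any $x\in F$; then $x\in F\subset\lim_\xi\F$, so $T_D$(3) yields $A\in\{F\}^\uparrow$ with $\lim\{x\}^\uparrow\cap A\subset\{x\}$; since $F\subset A$ and $F\subset\lim_\xi\F\subset\lim_\xi\{x\}^\uparrow$ (because $\{x\}^\uparrow\geq\F$), you get $F\subset\{x\}$. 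This in fact strengthens Proposition~\ref{prop:TDweaklysober} by removing the finite-depth hypothesis from its second clause.

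For (2), your weak-sobriety argument via Lemma~\ref{lem:equallimpoints}(2) and Proposition~\ref{prop:wsoberandqwsober}(1) matches the paper. But for $T_D$ you never land on the actual mechanism, and your speculated route---assuming $\lim\{x\}^\uparrow\supsetneq\{x\}$ and manufacturing a \emph{free} irreducible ultrafilter from a cofinite trace on $\lim\{x\}^\uparrow$---is aimed at $T_1$, not $T_D$, and cannot succeed (the conclusion is genuinely $T_D$, not $T_1$: think of the Sierpi\'nski space). The paper's argument is both simpler and different in kind: it uses the \emph{given} witness $\U$ to the failure of $T_D$ directly. Negating $T_D$(4) yields $x\in\lim_\xi\U$ with $\lim\{x\}^\uparrow\cap U\cap(X\setminus\{x\})\neq\emptyset$ for every $U\in\U$; hence $\lim\{x\}^\uparrow\in\U^\#=\U$, and $S_0$ (applied with $x\in\lim_\xi\U$) gives $\lim\{x\}^\uparrow\subset\lim_\xi\U$, so $\lim_\xi\U\in\U$: the very $\U$ is irreducible. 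By hypothesis $\U=\{t\}^\uparrow$ is principal; taking $U=\{t\}$ forces $t\neq x$ and $t\in\lim\{x\}^\uparrow$, while $x\in\lim\{t\}^\uparrow$ was given, contradicting the antisymmetry you already derived from $S_0+T_0$. You were circling this when you wrote ``$\U$ itself need not be irreducible''---in fact it \emph{is}, and that is the whole point.
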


\begin{proof}
(1) is Proposition \ref{prop:TDweaklysober}.

(2) if $\xi$ is $T_{0}$ and $S_{0}$ and every irreducible ultrafilter
is principal then $\xi$ is weakly sober: if $\lim\U\in\U$ there
is $x$ with $\U=\{x\}^{\uparrow}$ and thus $\lim\U=\lim\{x\}^{\uparrow}$.
If $t\neq x$ satisfies $\lim\{t\}^{\uparrow}=\lim\{x\}^{\uparrow}$,
by $T_{0}$ there is $\F\in\mathbb{F}X$ with $\card(\lim\F\cap\{t,x\})=1$,
which is not compatible with $S_{0}$. Moreover, $\xi$ is $T_{D}$.
Suppose to the contrary that there is $x\in X$ and $\U\in\mathbb{U}X$
with $x\in\lim\U$ such that $\lim\{x\}^{\uparrow}\cap U\cap(X\setminus\{x\})\neq\emptyset$
for every $U\in\U$. In particular, $\lim\{x\}^{\uparrow}\in\U^{\#}=\U$
and $\lim\{x\}^{\uparrow}\subset\lim\U$ because $\xi$ is $S_{0}$.
Hence $\lim\U\in\U$ and $\U$ is irreducible, hence principal, that
is, $\U=\{t\}^{\uparrow}$ and $\lim\{x\}^{\uparrow}\subset\lim\{t\}^{\uparrow}$.
Hence $x\in\lim\{t\}^{\uparrow}$ and $t\in\lim\{x\}^{\uparrow}$
by taking $U=\{t\}$ in $\lim\{x\}^{\uparrow}\cap U\cap(X\setminus\{x\})\neq\emptyset$,
the argument above for the uniqueness of a generic point applies to
the effect that $x=t$. But that is not compatible with $\lim\{x\}^{\uparrow}\cap U\cap(X\setminus\{x\})\neq\emptyset$
for $U=\{t\}=\{x\}$ and we have a contradiction.
\end{proof}
\begin{cor}
\label{cor:soberTd} Let $(X,\xi)$ be a $S_{0}$ and $T_{0}$ convergence
space of finite depth. The following are equivalent:
\begin{enumerate}
\item every irreducible ultrafilter is principal;
\item every irreducible filter is principal;
\item $\xi$ is weakly sober and $T_{D}$;
\item $\xi$ is sober and $T_{D}$;
\item Every subspace of $\xi$ is sober;
\item If $\theta\geq\xi$ then $\theta$ is sober;
\item $\pt_{\mathbf{Lat}}(\mathbb{P}X,\lim_{\xi})$ is homeomorphic to $(X,\xi)$.
\end{enumerate}
\end{cor}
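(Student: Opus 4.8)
Write $L=(\mathbb{P}X,\lim_{\xi})$. The plan is to establish the first block $(1)\Leftrightarrow(2)\Leftrightarrow(3)\Leftrightarrow(4)$, then the block $(4)\Leftrightarrow(5)\Leftrightarrow(6)$, and finally $(1)\Leftrightarrow(7)$. For the first block, $(2)\Rightarrow(1)$ is trivial, $(1)\Rightarrow(3)$ is Theorem~\ref{thm:main}(2) (this is exactly where the hypotheses $S_{0}$ and $T_{0}$ are used), $(3)\Rightarrow(2)$ is Theorem~\ref{thm:main}(1) combined with the finite-depth hypothesis, i.e.\ Proposition~\ref{prop:TDweaklysober}, $(4)\Rightarrow(3)$ is immediate since sober spaces are weakly sober, and $(3)\Rightarrow(4)$ is the last assertion of Proposition~\ref{prop:TDweaklysober} (weakly sober $+$ $T_{D}$ $+$ finite depth $\Rightarrow$ sober).

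Next I would close the loop $(4)\Rightarrow(6)\Rightarrow(5)\Rightarrow(4)$. Here $(4)\Rightarrow(6)$ is the remark from Section~\ref{sec:sober} that any convergence finer than a sober one is sober. For $(6)\Rightarrow(5)$, given $A\subset X$, I would define $\theta$ on $X$ by: $x\in\lim_{\theta}\F$ iff $x\in\lim_{\xi}\F$ and additionally $A\in\F$ when $x\in A$ while $X\setminus A\in\F$ when $x\notin A$; one checks at once that $\theta$ is a convergence with $\theta\geq\xi$ and $\theta_{|A}=\xi_{|A}$, and that $A$ is $\theta$-closed since no proper filter contains both $A$ and $X\setminus A$; then $(6)$ makes $\theta$ sober, hence its closed subspace $A$ sober (Section~\ref{sec:sober}), and $A$ was arbitrary. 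For $(5)\Rightarrow(4)$ the subspace $X$ itself gives sobriety, so only $T_{D}$ is at stake: were $\xi$ not $T_{D}$, there would be $x\in X$ and an ultrafilter $\U$ with $x\in\lim_{\xi}\U$ and $Z:=\lim_{\xi}\{x\}^{\uparrow}\setminus\{x\}\in\U$; then $S_{0}$ gives $\lim_{\xi}\{x\}^{\uparrow}\subset\lim_{\xi}\U$, hence $Z\subset\lim_{\xi}\U$, so the trace $\U_{|Z}$ has limit $Z$ in the subspace $Z$ and is thus an irreducible filter there; by $(5)$ that subspace is sober, so $\U_{|Z}$ has a generic point $g\in Z$ with $Z\subset\lim_{\xi}\{g\}^{\uparrow}$, and since $\lim_{\xi}\{g\}^{\uparrow}$ is closed (Lemma~\ref{lem:equallimpoints}) and $x\in\cl_{\xi}Z$ (every $\xi$-open neighborhood of $x$ lies in $\U$ and hence meets $Z$), this forces $x\in\lim_{\xi}\{g\}^{\uparrow}$; combined with $g\in Z\subset\lim_{\xi}\{x\}^{\uparrow}$, antisymmetry (Lemma~\ref{lem:equallimpoints}(2), valid since $\xi$ is $S_{0}$ and $T_{0}$) forces $x=g$, contradicting $g\in Z$.

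Finally, for $(1)\Leftrightarrow(7)$: by \eqref{eq:pointslattices}, $\pt_{\mathbf{Lat}}L$ is the set of irreducible ultrafilters on $X$, and $h\colon x\mapsto\{x\}^{\uparrow}$ is a dense embedding of $X$ into $\pt_{\mathbf{Lat}}L$ (Proposition~\ref{prop:subspaceofptL}) whose image is precisely the set of principal ultrafilters, so $(1)$ says $h$ is onto, i.e.\ a homeomorphism, which is $(7)$. For the converse, assume $(7)$: since $X$ is $T_{0}$ and $S_{0}$ it is antisymmetric, hence aas (Lemma~\ref{lem:equallimpoints}(2)), so $\pt_{\mathbf{Lat}}L\cong X$ is aas; being also weakly quasi-sober (Proposition~\ref{prop:ptLweaklysober}), it is weakly sober (Proposition~\ref{prop:wsoberandqwsober}(1)), and therefore so is $X$. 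Then for an irreducible ultrafilter $\U$ on $X$, weak sobriety of $X$ provides $x_{\U}$ with $\lim_{\xi}\U=\lim_{\xi}\{x_{\U}\}^{\uparrow}$, so by Lemma~\ref{lem:principalofpoints} the principal ultrafilters $\{\U\}^{\uparrow}$ and $\{h(x_{\U})\}^{\uparrow}$ on $\pt_{\mathbf{Lat}}L$ have the common limit $(\lim_{\xi}\U)^{\bullet}$ in $\pt_{\mathbf{Lat}}L$, and uniqueness of generic points in the weakly sober space $\pt_{\mathbf{Lat}}L$ gives $\U=\{x_{\U}\}^{\uparrow}$, i.e.\ $(1)$.

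The step I expect to be the main obstacle is $(5)\Rightarrow(4)$ — extracting $T_{D}$ from the hypothesis that every subspace is sober. This is the convergence-space counterpart of the classical equivalence ``hereditarily sober $\iff$ sober and $T_{D}$'', and the delicate part is to pick the right non-sober subspace and to control the interplay between the $\xi$-closure operator and the limit sets $\lim_{\xi}\{x\}^{\uparrow}$ by means of $S_{0}$ and $T_{0}$.
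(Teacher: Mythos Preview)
Your $(4)\Rightarrow(6)$ step cites the paper's unproved remark from Section~\ref{sec:sober} that a convergence finer than a (weakly) sober one is (weakly) sober; unfortunately that remark is false, so this step does not go through as written. For a counterexample take $\tau$ the Scott topology on $[0,1]$ (sober but not $T_D$): any free ultrafilter $\U$ containing every $(a,1)$ with $a<1$ satisfies $\lim_\tau\U=[0,1]\in\U$, and the convergence $\theta\geq\tau$ given by $\lim_\theta\{x\}^\uparrow=\{x\}$, $\lim_\theta\U=[0,1]$, and $\lim_\theta\F=\emptyset$ otherwise (exactly the paper's own construction in its proof of $\neg(1)\Rightarrow\neg(6)$) is not even weakly sober. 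The paper avoids the remark here and proves $(1)\Rightarrow(6)$ directly: a $\theta$-irreducible ultrafilter is $\xi$-irreducible, hence principal by $(1)$; since $T_D$ passes to finer convergences (Proposition~\ref{prop:TDpreserve}), $\theta$ is $T_D$, hence antisymmetric (Lemma~\ref{lem:finiteTD}(1)), so every $\theta$-irreducible filter is in fact a principal ultrafilter with a unique generic point. As you already have $(1)\Leftrightarrow(4)$, you can repair your cycle by substituting this direct $(1)\Rightarrow(6)$.

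Apart from that one step, your proof is correct and organizes the middle block differently from the paper. The paper links $(5)$ and $(6)$ separately back to $(1)$ by producing, from a hypothetical free irreducible ultrafilter, an explicit non-sober subspace and an explicit non-sober finer convergence. Your route $(6)\Rightarrow(5)\Rightarrow(4)$ is more structural: the ``split'' convergence making an arbitrary $A\subset X$ simultaneously $\theta$-closed and carrying $\xi_{|A}$ is a neat device for $(6)\Rightarrow(5)$, and your $(5)\Rightarrow T_D$ argument is the natural convergence-space version of the classical hereditary-sobriety characterization. Your $(7)\Rightarrow(1)$, via aas transported from $X$ to $\pt_{\mathbf{Lat}}L$ together with weak quasi-sobriety of $\pt_{\mathbf{Lat}}L$, is also different from---and tidier than---the paper's density argument.
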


\begin{proof}
Equivalence between points 1 through 4 follows directly from Theorem
\ref{thm:main}, Proposition \ref{prop:TDweaklysober}.

$(1)\iff(5)$: By Proposition \ref{prop:TDweaklysober}, sober and
weakly sober are equivalent in a $T_{D}$ convergence of finite depth,
hence in all its subspaces. Let $A\subset X$ with inclusion map $i:A\to X$,
and let $\U$ be an ultrafilter on $A$ with $\lim_{\xi_{A}}\U\in\U$.
Hence $i(\lim_{\xi_{A}}\U)\in i[\U]$ and $i(\lim_{\xi_{A}}\U)\subset\lim_{\xi}i[\U]$
so that the ultrafilter $i[\U]$ on $(X,\xi)$ is irreducible. By
(1), there is $x\in X$ with $i[\U]=\{x\}^{\uparrow}$. As $A\in i[\U]$,
$x\in A$ and thus $\lim_{\xi_{A}}\U=\lim_{\xi_{A}}\{x\}^{\uparrow}$
and $A$ is weakly sober. 

Conversely, if there is a free ultrafilter $\U$ that is irreducible,
if $\lim\U$ does not have a generic point, then the space is not
weakly sober. If $\lim\U$ has a generic point, it is unique by Lemma
\ref{lem:equallimpoints}, say, $\lim\U=\lim\{x_{0}\}^{\uparrow}$.
Let $A=\lim\U\setminus\{x_{0}\}$ with the induced convergence. Then
$A\in\U$ (because $\lim\U\in\U$ and $X\setminus\{x_{0}\}\in\U$
as $\U$ is free) and $\lim_{\xi_{A}}\U=A\in\U$ but $\lim_{\xi_{A}}\U$
has not generic point. Hence, $(A,\xi_{A})$ is a non weakly sober
subspace.

$(1)\iff(6):$ If $\U\in\mathbb{U}X$ with $\lim_{\theta}\U\in\U$
then $\lim_{\xi}\U\in\U$ because $\lim_{\theta}\U\subset\lim_{\xi}\U$,
so that $\U$ is $\xi$-irreducible, hence principal. Hence it has
a unique generic point, because $\xi$ is $S_{0}$ and $T_{0}$, hence
almost antisymmetric. Thus $\theta$ is sober.

Conversely, suppose there is a free $\xi$-irreducible ultrafilter
$\U$. Let $\theta$ be defined by $x\in\lim_{\theta}\F$ if and only
if $\F=\{x\}^{\uparrow}$ whenever $x\notin\lim_{\xi}\U$ and $x\in\lim_{\theta}\F$
if and only if $\F=\U$ or $\F=\{x\}^{\uparrow}$ whenever $x\in\lim_{\xi}\U$.
Then $\theta\geq\xi$ and $\theta$ is not sober: $\U$ is $\theta$-irreducible
because $\lim_{\theta}\U=\lim_{\xi}\U\in\U$ but does not have a generic
point because singletons are closed and $\lim_{\theta}\U$ is infinite. 

$(1)\iff(7)$: Let $L=(\mathbb{P}X,\lim_{\xi})$. By $(1)$ the map
the map $h:\pt_{\mathbf{Lat}}L\to(X,\xi)$ defined by $h(\{x\}^{\uparrow})=x$
is onto and thus is an homeomorphism by Proposition \ref{prop:subspaceofptL}.

Conversely, if $\pt L$ is homeomorphic to $(X,\xi)$ but there are
free $\xi$-irreducible ultrafilters then, in view of Proposition
\ref{prop:subspaceofptL}, $\pt L$ contains an homeomorphic copy
of itself as a dense \emph{proper} subset via an homeomorphism $j:\pt L\to j(\pt L)\subsetneq\pt L$,
which is not possible, for $j(\pt L)$ is closed as an homeomorphic
image of a closed subset of $\pt L$, hence $\pt L\setminus j(\pt L)$
is open, making it impossible for $j(\pt L)$ to be dense in $\pt L$.
\end{proof}
Note that in view of Remark \ref{rem:notaas}, when $(X,\xi)$ is
topological, we can add (\ref{eq:ptPaas}) for every pair $\U,\W$
of irreducible ultrafilters as an additional equivalent condition
in Corollary \ref{cor:soberTd}.

\bibliographystyle{plain}

\end{document}